\tikzstyle{hvector}=[inner sep=2pt,draw=blue!50,fill=blue!10,thick]
\tikzstyle{unit}=[inner sep=2pt,shape=circle, draw]
\tikzstyle{counit}=[inner sep=2pt,shape=circle, draw,fill=gray]
\tikzstyle{antipode}=[inner sep=2pt,shape=rectangle, draw]
\tikzstyle{cocycle}=[inner sep=2pt,shape=circle, draw]
\tikzstyle{twistedm}=[inner sep=2pt,shape=circle, fill=gray]
\tikzstyle{autom}=[inner sep=2pt,shape=circle, draw]
\tikzstyle{coact}=[inner sep=2pt,shape=circle, fill=black]
\tikzset{> /.tip = {Stealth[round,length=6pt]}}
\tikzstyle over=[preaction={draw,line width=5pt,white}]
\tikzset{baseline={([yshift=-.5ex]current bounding box.center)} }
\tikzset{every path/.style={thick}
}
\newtheorem{theorem}{Theorem}[section]
\theoremstyle{definition}
\newtheorem{proposition}[theorem]{Proposition}
\newtheorem{lemma}[theorem]{Lemma}
\newtheorem{definition}[theorem]{Definition}
\newtheorem{remark}[theorem]{Remark}
\newtheorem{corollary}[theorem]{Corollary}
\newtheorem{conjecture}[theorem]{Conjecture}
\def\red#1{\textcolor{red}{[#1]}}
\definecolor{green}{rgb}{0.0,0.5,0.0}
\def\BZ{\mathbbm Z}
\def\BQ{\mathbbm Q}
\def\BC{\mathbbm C}
\def\BK{\mathbbm K}
\def\Bunit{\mathbbm{1}}
\def\calI{\mathcal I}
\def\calC{\mathcal C}
\def\calP{\mathcal P}
\def\la{\langle}
\def\ra{\rangle}
\def\tq{\tilde{q}}
\def\a{\alpha}
\def\b{\beta}
\def\g{\gamma}
\def\th{\theta}
\def\be{\begin{equation}}
\def\ee{\end{equation}}
\def\maxdeg{\mathrm{maxdeg}}
\def\sG{\mathsf{G}}
\def\sX{\mathsf{X}}
\def\vphi{\varphi}
\def\red#1{\textcolor[rgb]{1.00,0.00,0.00}{#1}}
\def\End{\mathrm{End}}
\def\id{\mathrm{id}}
\def\trl{\mathrm{tr}_L}
\def\trr{\mathrm{tr}_R}
\def\ptr{\mathrm{ptr}}
\def\LG{\mathrm{LG}}
\def\genus{\mathrm{genus}}
\def\br#1{ \{ #1 \}}
\def\Alex{\Delta}
\def\p#1{\ensuremath{\overline {#1}}}
\def\UqG{\mathrm{U}_q\mathfrak{sl}(2|1)}
\def\Uq{\mathrm{U}_q}
\def\UqGH{\mathrm{U}_q^H\mathfrak{sl}(2|1)}
\def\UqG{\mathrm{U}_q\mathfrak{sl}(2|1)}
\def\UqH{\mathrm{U}_q^H}
\def\Hom{\mathrm{Hom}}
\def\cat{\calC}
\def\catsl{\calC^{\mathfrak{sl}(2|1)}}
\def\proj{\calP}
\def\unit{\Bunit}
\def\knot{K}
\def\link{L}
\def\tangle{T}
\def\sl21{\mathfrak{sl}(2|1)}
\def\U{\mathrm{U}}
\newcommand{\lcoev}{\stackrel{\longleftarrow}{\operatorname{coev}}}
\newcommand{\lev}{\stackrel{\longleftarrow}{\operatorname{ev}}}
\newcommand{\rev}{\stackrel{\longrightarrow}{\operatorname{ev}}}
\newcommand{\rcoev}{\stackrel{\longrightarrow}{\operatorname{coev}}}
\newcommand{\mt}{\mathsf{tr}}
\newcommand{\md}{\mathsf{d}}
\newcommand\mRT{F'}
\newcommand\RT{F}
\newcommand{\drawproj}[3]{
  \draw[] (#1,#2) to[in=-150,out=90] (#1+.5,#2+.35);
  \draw[] (#1+1,#2) to[in=-30,out=90] (#1+.5,#2+.35);	
  \draw[very thick,#3](#1+.5,#2+.35) to (#1+.5,#2+.85);
  \draw[->] (#1+.5,#2+.85) to[out=150,in=-90] (#1,#2+1.3);
  \draw[->] (#1+.5,#2+.85) to[out=30,in=-90] (#1+1,#2+1.3);	
}
\newcommand{\drawprojc}[1]{
    \draw[] (0,0) to[in=-150,out=90] (.5,.35);
	\draw[] (1,0) to[in=-30,out=90] (.5,.35);	
	\draw[very thick,#1](.5,.35) to (.5,.85);
	\draw[] (.5,.85) to[out=150,in=-90] (0,1.3);
	\draw[] (.5,.85) to[out=30,in=-90] (1,1.3);	
    \draw (1,0) arc (180:360:.5);
    \draw (1,2.3) arc (180:0:.5);
    \draw[] (2,1.3) to[out=90, in=-90] (1,2.3); 
    \draw[over,->] (1,1.3) to[out=90, in=-90] (2,2.3);
    \draw (2,0) to (2,1.3);
    \draw (0,0) to (0,-.5);
    \draw[->] (0,1.3) to (0,2.3);
}
\renewcommand\thepart{\@Roman\c@part}%
\renewcommand\part{%
   \if@noskipsec \leavevmode \fi
   \par
   \addvspace{6.7ex}%
   \@afterindentfalse
   \secdef\@part\@spart}
\def\@part[#1]#2{%
    \ifnum \c@secnumdepth >\m@ne
      \refstepcounter{part}%
      \addcontentsline{toc}{part}{Part~\thepart.\ #1}%
    \else
      \addcontentsline{toc}{part}{#1}%
    \fi
    {\parindent \z@ \raggedright
     \interlinepenalty \@M
     \normalfont
     \ifnum \c@secnumdepth >\m@ne
       \centering\large\scshape \partname~\thepart.%
       \hspace{1ex}%
     \fi%
     \large\scshape #2%
     \markboth{}{}\par}%
    \nobreak
    \vskip 4.7ex
    \@afterheading}
  \def\@spart#1{
  \refstepcounter{part}%
  \addcontentsline{toc}{part}{#1}%
    {\parindent \z@ \raggedright
     \interlinepenalty \@M
     \normalfont
     \centering\large\scshape #1\par}%
     \nobreak
     \vskip 4.7ex
     \@afterheading}
\renewcommand*\l@part[2]{%
  \ifnum \c@tocdepth >-2\relax
    \addpenalty\@secpenalty
    \addvspace{0.75em \@plus\p@}%
    \begingroup
      \parindent \z@ \rightskip \@pnumwidth
      \parfillskip -\@pnumwidth
      {\leavevmode
       \normalsize \bfseries #1\hfil \hb@xt@\@pnumwidth{\hss #2}}\par
       \nobreak
       \if@compatibility
         \global\@nobreaktrue
         \reverypar{\global\@nobreakfalse\reverypar{}}%
      \fi
    \endgroup
  \fi}
\def\l@subsection{\@tocline{2}{0pt}{2pc}{6pc}{}}
\begin{document}

\title[On the colored Links--Gould polynomial]{
  On the colored Links--Gould polynomial}

\author[S. Garoufalidis]{Stavros Garoufalidis}
\address{
  International Center for Mathematics, Department of Mathematics \\
  Southern University of Science and Technology \\
  Shenzhen, China \newline
  {\tt \url{http://people.mpim-bonn.mpg.de/stavros}}}
\email{stavros@mpim-bonn.mpg.de}

\author[M. Harper]{Matthew Harper}
\address{Michigan State University, East Lansing, Michigan, USA}
\email{mrhmath@proton.me}

\author[R. Kashaev]{Rinat Kashaev}
\address{Section de Math\'ematiques, Universit\'e de Gen\`eve \\
rue du Conseil-G\'en\'eral 7-9, 1205 Gen\`eve, Switzerland \newline
         {\tt \url{http://www.unige.ch/math/folks/kashaev}}}
\email{Rinat.Kashaev@unige.ch}
       
\author[B.-M. Kohli]{Ben-Michael Kohli}
\address{Section de Math\'ematiques, Universit\'e de Gen\`eve \\
rue du Conseil-G\'en\'eral 7-9, 1205 Gen\`eve, Switzerland}
\email{bm.kohli@protonmail.ch}

\author[E. Wagner]{Emmanuel Wagner}
\address{Univ Paris Cit\'e, IMJ-PRG, Univ Paris Sorbonne, UMR 7586 CNRS, F-75013, Paris,
France}
\email{wagner@imj-prg.fr}

\thanks{
  {\em Key words and phrases:}
  Knots, Links, Links--Gould invariant, $\mathfrak{sl}(2|1)$, Lie superalgebras,
  $R$-matrices, TQFT, Hopf algebras, quantum groups, Nichols algebras, mutation,
  knot genus, monoidal categories, braided, ribbon categories.
}

\date{17 December 2025}

\begin{abstract}
We give a cabling formula for the Links--Gould polynomial of knots colored with a
  $4n$-dimensional irreducible representation of $\mathrm{U}^H_q\sl21$
  and identify them with the $V_n$-polynomial of knots for $n=2$. Using the
  cabling formula, we obtain genus bounds and a specialization to the Alexander
  polynomial for the colored Links--Gould polynomial that is independent of $n$,
  which implies corresponding properties of the $V_n$-polynomial for $n=2$ conjectured
  in previous work of two of the authors, and extends the work done for $n=1$.
  Combined with work of one of the authors \cite{GL:patterns}, our genus bound
  for $\LG^{(2)}=V_2$ is sharp for all knots with up to $16$ crossings.
\end{abstract}

\maketitle

{\footnotesize
\tableofcontents
}


\section{Introduction}
\label{sec.intro}

The paper concerns the identification of two sequences of 2-variable knot
polynomials, one that comes from the representation theory of
$\sl21$ (the colored Links--Gould polynomial of a knot),
and another that comes from the recent work of two of the authors~\cite{GK:multi}
using the $R$-matrix of a rank 2 Nichols algebra (the $V_n$-polynomials of a knot).  

Let us recall these knot polynomials starting from the representation theory of the
associated quantum groups. The latter are the unrolled quantum groups
of~\cite{Geer:superalgebra} whose simple weight modules are deformations 
of representations of the classical Lie superalgebra $\sl21$ by results of Kac and Geer
\cite{Kac:superalgebra, Geer:superalgebra}, and also \cite{Geer:multi,Geer:multi1}. These simple, finite dimensional weight representations are graded vector spaces
$V(n,\a)$  of dimension $4(n+1)$ with highest weight $(n,\a) \in \BZ_{\geq 0}
\times \BC$ and satisfy a (typicality) condition $\a(n+1+\a) \neq 0$. We also assume that the grading of the highest weight space of $V(n,\a)$ has parity $n \bmod 2$.

Using a Reshetikhin--Turaev construction, we let $\LG^{(n+1)}(q^\a,q)
\in \BZ[q^{\pm \a}, q^{\pm 1}]$
denote the knot polynomial associated to the irreducible representation $V(n,\a)$. In our notation, $\LG^{(1)}$ (usually denoted by $\LG^{2,1}$ or simply $\LG$)
is the Links--Gould invariant~\cite{Links-Gould} and
the sequence $\LG^{(n)}$ of 2-variable polynomials for $n \geq 1$ is the colored
Links--Gould
polynomial\footnote{
One may also denote these colored 2-variable polynomial invariants by $\LG^{2,1}_{n}$.}. Our first result is that the colored Links--Gould polynomial
of a knot is uniquely determined by the sequence of the $\LG^{(1)}$-polynomials of its
parallels. The relation between $\LG^{(1)}$ of the $(n,0)$-parallel of a
0-framed knot and $\LG^{(k)}$ for $k\leq n$ is given in the next theorem.

\begin{theorem}
\label{thm.n0}
For any 0-framed knot $\knot$ and all $n \geq 1$, we have
\be
\label{LGn0}
\LG^{(1)}_{\knot^{(n,0)}}(q^\a,q) = \sum_{k + \ell \leq n-1}
m^{(n)}_{k,\ell} A^{(n)}_{k,\ell}(q^\a,q) \LG^{(k+1)}_{\knot}(q^{n\a +\ell},q)
\ee
where $m^{(n)}_{k,\ell}$ is the multiplicity of the representation
$V(k,n\a+\ell)$ in $V(0,\a)^{\otimes n}$ and 
\be
\label{aijn}
A^{(n)}_{k,\ell}(q^\a,q) = 
(-1)^k\frac{\br {k+1} \br \a \br {\a+1} }{\br 1 \br {{n\a +\ell}}
  \br {{n\a+k+\ell+1}}} \,.
\ee
with $\br x=q^x-q^{-x}$. 
\end{theorem}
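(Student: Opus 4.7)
The plan is to compute $\LG^{(1)}_{\knot^{(n,0)}}$ by treating the $n$-cable, with every strand colored by $V(0,\alpha)$, as the single-strand diagram of $\knot$ colored by the tensor product $V:=V(0,\alpha)^{\otimes n}$, and then to decompose $V$ into typical simple summands. The first step is the cabling identity for the modified Reshetikhin--Turaev functor: the invariant of the 0-framed cable $\knot^{(n,0)}$, with each strand colored by $V(0,\alpha)$, equals the modified trace on $V$ of the endomorphism $\fad{V}(\knot)\in\End_{\UqH\sl21}(V)$ obtained by cutting the $V$-colored strand of $\knot$. Together with the normalization convention that $\LG^{(1)}_\knot(q^\alpha,q)$ is the scalar by which $\fad{V(0,\alpha)}(\knot)$ acts, this gives
\begin{equation*}
  \md\bigl(V(0,\alpha)\bigr)\,\LG^{(1)}_{\knot^{(n,0)}}(q^\alpha,q) \;=\; \mt_V\bigl(\fad{V}(\knot)\bigr),
\end{equation*}
the 0-framing hypothesis being exactly what is needed so that no extra ribbon twists appear under this identification.

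For generic $\alpha$ every summand of $V$ is typical, and one has a finite direct-sum decomposition
\begin{equation*}
V(0,\alpha)^{\otimes n} \;\cong\; \bigoplus_{k+\ell\le n-1} m^{(n)}_{k,\ell}\, V(k,n\alpha+\ell),
\end{equation*}
with allowed highest weights $(k,n\alpha+\ell)$ and multiplicities $m^{(n)}_{k,\ell}$ dictated by central characters. Schur's lemma then forces $\fad{V}(\knot)$ to act by a scalar on each summand, and by the very definition of the colored Links--Gould polynomial this scalar is $\LG^{(k+1)}_\knot(q^{n\alpha+\ell},q)$. Applying the modified trace summand by summand yields
\begin{equation*}
\mt_V\bigl(\fad{V}(\knot)\bigr) \;=\; \sum_{k+\ell\le n-1} m^{(n)}_{k,\ell}\,\md\bigl(V(k,n\alpha+\ell)\bigr)\,\LG^{(k+1)}_\knot(q^{n\alpha+\ell},q),
\end{equation*}
so combining the two displays one reads off $A^{(n)}_{k,\ell}(q^\alpha,q)=\md(V(k,n\alpha+\ell))/\md(V(0,\alpha))$. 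The explicit formula \eqref{aijn} then follows from the closed form $\md(V(k,\beta)) = (-1)^k\br{k+1}/\bigl(\br 1\br\beta\br{\beta+k+1}\bigr)$, which is the standard value of the modified dimension of a typical simple $\UqH\sl21$-module on the ideal of projectives (the overall normalization drops out in the ratio).

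The main obstacles are bookkeeping ones. The first is making the cabling identity of Step~1 precise at the level of the modified trace: one must invoke the projectivity of $V$ and the partial-trace (cyclicity) property of $\mt$ to see that cutting a single $V(0,\alpha)$-colored strand of the cable yields, after multiplication by $\md(V(0,\alpha))$, the same scalar as cutting the bundled $V$-colored strand of $\knot$. The second is verifying the explicit expression for $\md(V(k,\beta))$ in the conventions adopted here for $\UqH\sl21$. The typical-block decomposition of Step~2, and the resulting combinatorics of $m^{(n)}_{k,\ell}$, is a routine representation-theoretic exercise which is presumably treated separately elsewhere in the paper, so the identity ultimately holds as an equality of polynomials in $\BZ[q^{\pm\alpha},q^{\pm1}]$ by density of the generic locus.
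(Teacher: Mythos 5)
Your proposal is correct and follows essentially the same route as the paper: the cabling identity $\mRT_{V(0,\a),\knot^{(n,0)}}=\mRT_{V(0,\a)^{\otimes n},\knot}$, the decomposition of $V(0,\a)^{\otimes n}$ into typical simples, additivity of the modified trace via cyclicity, and the identification $A^{(n)}_{k,\ell}=\md(V(k,n\a+\ell))/\md(V(0,\a))$ — this is exactly Theorem~\ref{thm.n0cable} combined with the fusion rule~\eqref{texp}. One small caveat: since $m^{(n)}_{k,\ell}>1$ in general, Schur's lemma alone does not force the cut-knot endomorphism to act by a scalar on each embedded copy, and identifying that scalar with $\LG^{(k+1)}_{\knot}(q^{n\a+\ell},q)$ requires naturality of the Reshetikhin--Turaev functor under the inclusion/projection morphisms (the paper's Lemma~\ref{lem.11}), which is what your ``by the very definition'' step implicitly uses.
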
  

The multiplicity $m^{(n)}_{k,\ell}$ has been studied in detail in~\cite{Anghel}
and can be computed recursively by
\be
\label{mrec}
m^{(n)}_{k,\ell} = m^{(n-1)}_{k,\ell} + m^{(n-1)}_{k,\ell-1} + m^{(n-1)}_{k-1,\ell}
+ m^{(n-1)}_{k+1,\ell-1}
\ee
with the understanding that $m^{(n)}_{k,\ell}=0$ if $k<0$ or $\ell<0$ and with
the initial condition $m^{(n)}_{n-1,0}=1$. 

The above theorem is a corollary of a more general statement for TQFTs associated
to categories of modules of quantum (super)groups, applying more generally to any
$\BK$-linear, locally-finite, unimodular, ribbon category $\cat$ with enough
projectives. We give a precise definition of these adjectives in Section~\ref{sec.proofn0}. Such a category $\cat$ has  a full subcategory $\calP$
of projective objects and a result of~\cite[Cor.5.6]{GKPM22} asserts that, up
to a global scalar, there is a unique modified trace on $\calP$. The modified trace determines a renormalized Reshetikhin--Turaev invariant
$\mRT_{V}$ of framed, oriented links in 3-space, whose components are colored by
elements of $\calP$. A detailed description of this is given in
\cite{GPT, GKPM11,GKPM22}. Throughout the paper, by link or tangle we always
mean a framed, oriented one.

\begin{theorem}
\label{thm.n0cable}
Fix a simple, projective object $V\in \mathcal{C}$ such that $V^{\otimes n}$ is
a direct sum of finitely many simple objects $V_i$. For all knots
$\knot$, we have
\be
\label{n0cable}
\mRT_{V,\knot^{(n,0)}} = \sum_{i}
\mRT_{V_i,\knot}\,.
\ee
\end{theorem}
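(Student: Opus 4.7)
The plan is to reduce the left-hand side to the renormalized Reshetikhin--Turaev invariant of a $(1,1)$-tangle presentation of $\knot$ colored by the tensor product $V^{\otimes n}$, and then to split the result according to the decomposition $V^{\otimes n}\cong \bigoplus_i V_i$ using additivity of the modified trace. Concretely, I would present $\knot$ as the closure of a $(1,1)$-tangle $T_{\knot}$, so that $\knot^{(n,0)}$ is the closure of the $(n,n)$-tangle $T^{(n)}_{\knot}$ obtained by replacing the strand of $T_{\knot}$ with $n$ parallel copies in the given framing. The cabling property of the Reshetikhin--Turaev functor, a consequence of its monoidality applied cable-by-cable to the elementary generators (braiding, cups/caps, twist), identifies the operator on $V^{\otimes n}$ assigned to $T^{(n)}_{\knot}$ with each strand colored by $V$ and the operator $\langle T_{\knot}\rangle_{V^{\otimes n}}:V^{\otimes n}\to V^{\otimes n}$ assigned to $T_{\knot}$ colored by $V^{\otimes n}$. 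Hence
\[
\mRT_{V,\knot^{(n,0)}} \;=\; \mt_{V^{\otimes n}}\bigl(\langle T_{\knot}\rangle_{V^{\otimes n}}\bigr),
\]
where the modified trace is well-defined because $V^{\otimes n}\in\calP$: in a rigid monoidal category, $V\otimes -$ preserves projectives, as it admits the exact right adjoint $V^{*}\otimes -$.

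Next I would fix an isomorphism $V^{\otimes n}\cong \bigoplus_i V_i$ with associated inclusions $\iota_i$ and projections $\pi_i$. Since the Reshetikhin--Turaev functor is additive in its object labels, $\langle T_{\knot}\rangle_{V^{\otimes n}}$ corresponds under this isomorphism to $\bigoplus_i \langle T_{\knot}\rangle_{V_i}$; equivalently, naturality of the $(1,1)$-tangle operator as an endomorphism of the identity functor of $\cat$ yields $\pi_i\circ\langle T_{\knot}\rangle_{V^{\otimes n}}\circ\iota_i=\langle T_{\knot}\rangle_{V_i}$ for every $i$. Combined with the additivity of the modified trace, namely $\mt_{W}\bigl(\bigoplus_i g_i\bigr)=\sum_i \mt_{V_i}(g_i)$ for $W\cong\bigoplus_i V_i$ in $\calP$ (see \cite{GKPM22}), this gives
\[
\mt_{V^{\otimes n}}\bigl(\langle T_{\knot}\rangle_{V^{\otimes n}}\bigr) \;=\; \sum_i \mt_{V_i}\bigl(\langle T_{\knot}\rangle_{V_i}\bigr) \;=\; \sum_i \mRT_{V_i,\knot},
\]
which is the asserted identity.

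The main obstacle I anticipate is the careful bookkeeping of framings in the cabling step: one must verify that the $(n,0)$-parallel (as opposed to some twisted parallel) corresponds precisely to labeling the single strand by $V^{\otimes n}$ and not by some ribbon-twisted version of it. Once this is pinned down, the remaining ingredients — projectivity of $V^{\otimes n}$, compatibility of the Reshetikhin--Turaev functor with direct-sum decompositions of the label, and additivity of the modified trace — are standard in the formalism developed in \cite{GPT, GKPM11, GKPM22}. In particular, the argument does not require the simple summands $V_i$ to be pairwise non-isomorphic, since multiplicities are absorbed into the index set of the direct sum.
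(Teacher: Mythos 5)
Your proposal is correct and follows essentially the same route as the paper: reduce $\mRT_{V,\knot^{(n,0)}}$ to $\mRT_{V^{\otimes n},\knot}$ by cabling, decompose $\id_{V^{\otimes n}}$ into the idempotents of the direct sum, use naturality of the $(1,1)$-tangle operator, and conclude with the modified trace. The only cosmetic difference is that you package the final step as additivity of the modified trace over direct summands, whereas the paper derives the same conclusion on the spot from cyclicity of $\mt$ together with simplicity of the $V_i$ (writing $\RT_{V_i,T}$ as a scalar multiple of $\id_{V_i}$).
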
  

We will apply the above theorem in the category $\catsl$ defined in detail
in Section~\ref{sub.catsl21} below and to the simple, projective object $V(0,\a)$.
Roughly, the objects of $\catsl$ are finite dimensional weight-modules of
the unrolled quantum group of the Lie superalgebra $\sl21$
and the morphisms are linear maps that intertwine the action of the quantum group.
The category $\catsl$ is monoidal with respect to the tensor product of
representations, and is ribbon due to the ribbon Hopf algebra structure of
the unrolled quantum group. 

The Links--Gould invariants of an unframed link $\link$ are related to the
renormalized invariant of a 0-framed representative of $L$ by
\be
\mRT_{V(k,\alpha),\link}=\LG^{(k+1)}_{\link}(q^\a,q)\cdot\md(V(k,\alpha))
\ee
where, up to a global scalar,
\be
\label{eq:ModifiedDimension}
\md(V(k,\alpha))=(-1)^k\frac{\br{k+1}}{\br{{\a}}\br{{k+\alpha+1}}}
\ee
is the modified dimension of $V(k,\alpha)$ and equals the value of $F'$ on the unknot.
The coefficients $A^{(n)}_{k,\ell}(q^\a,q)$ in \eqref{aijn} satisfy
\be
\label{aijn2}
A^{(n)}_{k,\ell}(q^\a,q) = \frac{\md(V(k,n\alpha+\ell))}{\md(V(0,\alpha))}\red{.}
\ee
and are therefore independent of the scaling of the modified dimension. Theorem \ref{thm.n0} together with the symmetries, specializations, and genus
bounds for the Links--Gould polynomial $\LG^{(1)}$ imply
the following result, see~\cite{Ishii,Kohli,KPM, Kohli-Tahar}.

\begin{theorem}
\label{thm.LG}
The colored Links--Gould polynomial of knots satisfies the symmetries and
specializations
\be
\label{LGspec}
\LG^{(n)}(q^\a,q) = \LG^{(n)}(q^{-\a-n},q), \qquad
\LG^{(n)}(1,q) = 1, \qquad
\LG^{(n)}(q^\a,1) = \Alex(q^{2\a})^2 ,
\ee
where $\Alex$ is the Alexander-Conway polynomial, and the genus bounds
\be
\label{LGgenus}
\deg_{q^\a} \LG^{(n)}_\knot (q^{\a},q) \leq 8 \, \genus(\knot) 
\ee
hold for all $n \geq 1$ and knots $\knot$. 
\end{theorem}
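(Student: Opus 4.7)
\emph{Proof plan.} The strategy is to prove all four properties simultaneously by induction on $n$, the base $n=1$ being exactly the cited results for $\LG^{(1)}$. For the inductive step, observe that in the cabling formula (\ref{LGn0}) the only summand involving $\LG^{(n)}$ is the one with $(k,\ell)=(n-1,0)$, where $m^{(n)}_{n-1,0}=1$. Thus we may solve for it:
\begin{equation*}
A^{(n)}_{n-1,0}(q^\a,q)\,\LG^{(n)}_\knot(q^{n\a},q)
=
\LG^{(1)}_{\knot^{(n,0)}}(q^\a,q)
-\sum_{\substack{k+\ell\leq n-1\\ k<n-1}} m^{(n)}_{k,\ell}\,A^{(n)}_{k,\ell}(q^\a,q)\,\LG^{(k+1)}_\knot(q^{n\a+\ell},q).
\end{equation*}
Each property of $\LG^{(n)}$ is then read off from the corresponding property of the right-hand side, which is controlled by the cited results for $\LG^{(1)}$ (applied to $\knot$ and to $\knot^{(n,0)}$) combined with the inductive hypothesis applied to $\LG^{(k+1)}$ for $k<n-1$.

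For the symmetry $\LG^{(n)}(q^\a,q)=\LG^{(n)}(q^{-\a-n},q)$, substitute $\a\mapsto-\a-1$ in the identity above. The symmetry of $\LG^{(1)}$ fixes the LHS cable polynomial, and by induction each $\LG^{(k+1)}(q^\b,q)=\LG^{(k+1)}(q^{-\b-k-1},q)$. The remaining task is to verify that, under the reindexing $\ell\mapsto n-1-k-\ell$ of the sum, the coefficients match: one checks the symmetry $m^{(n)}_{k,\ell}=m^{(n)}_{k,n-1-k-\ell}$ from the recursion (\ref{mrec}), and the analogous symmetry of $A^{(n)}_{k,\ell}$ from the modified-dimension expression (\ref{aijn2}) together with the invariance of $\md(V(k,\a))$ under $\a\mapsto-\a-k-1$ visible in (\ref{eq:ModifiedDimension}). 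Since $\a\mapsto-\a-1$ induces $n\a\mapsto-n\a-n$, this gives the claimed symmetry for $\LG^{(n)}$.

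For the normalization $\LG^{(n)}(1,q)=1$, take the limit $\a\to 0$ in the solved identity. Inspecting (\ref{aijn}), the factor $\br\a$ in the numerator forces $A^{(n)}_{k,\ell}(1,q)=0$ whenever $\ell>0$, while for $\ell=0$ the singularity $1/\br{n\a}$ cancels via L'Hopital to give $\lim_{\a\to 0}A^{(n)}_{k,0}(q^\a,q)=(-1)^k/n$. Using $\LG^{(1)}_{\knot^{(n,0)}}(1,q)=1$ and, by induction, $\LG^{(k+1)}_\knot(1,q)=1$ for $k<n-1$, the problem reduces to the combinatorial identity $\sum_{k=0}^{n-1}(-1)^k m^{(n)}_{k,0}=n$, which one verifies either directly from (\ref{mrec}) or by specializing the cabling formula to the unknot. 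The Alexander specialization $\LG^{(n)}(q^\a,1)=\Alex(q^{2\a})^2$ is proved analogously by setting $q=1$: the known $\LG^{(1)}_\link(q^\a,1)=\Alex_\link(q^{2\a})^2$ for links transforms the LHS, the inductive hypothesis rewrites the $\LG^{(k+1)}$ terms as squared Alexander polynomials, and a standard formula relating the one-variable Alexander of the $0$-framed cable $\knot^{(n,0)}$ to $\Alex_\knot(t^n)$ closes the bookkeeping.

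The genus bound is the main obstacle. From the cable construction, $\knot^{(n,0)}$ bounds a connected Seifert surface built from $n$ parallel pushoffs of a minimal Seifert surface for $\knot$ joined by $n-1$ bands, so $\genus(\knot^{(n,0)})\leq n\,\genus(\knot)+(n-1)$; the known $\LG^{(1)}$ bound then yields $\deg_{q^\a}\LG^{(1)}_{\knot^{(n,0)}}(q^\a,q)\leq 8n\,\genus(\knot)+8(n-1)$. Each inductive summand has $q^\a$-degree at most $8n\,\genus(\knot)$, and $A^{(n)}_{n-1,0}$ contributes a controlled Laurent degree read off from (\ref{aijn}). A naive estimate gives $\deg_{q^\a}\LG^{(n)}_\knot(q^{n\a},q)\leq 8n\,\genus(\knot)+O(n)$, i.e., $\deg_{q^\a}\LG^{(n)}_\knot(q^\a,q)\leq 8\,\genus(\knot)+O(1)$. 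The delicate point is to show that the extraneous $O(1)$ correction vanishes; I expect this to follow by pairing the estimate with the symmetry $\a\mapsto-\a-n$ already established and the Alexander specialization (which pins down the span of $\LG^{(n)}$ in a limit), so that the leading and trailing degrees are forced into the $8\,\genus(\knot)$ window.
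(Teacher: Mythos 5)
Your symmetry argument is essentially the paper's and is fine, but the other three parts have genuine problems, all stemming from the fact that for $n\geq 2$ the cable $\knot^{(n,0)}$ is an $n$-component \emph{link}, so the knot-level facts you invoke for $\LG^{(1)}$ of the cable are not available. For the normalization: letting $\a\to 0$ in \eqref{LGn0}, indeed $A^{(n)}_{k,\ell}\to 0$ for $\ell>0$ and $A^{(n)}_{k,0}\to(-1)^k/n$, but since $m^{(n)}_{k,0}=\binom{n-1}{k}$ the identity you need, $\sum_{k=0}^{n-1}(-1)^k m^{(n)}_{k,0}=n$, is false (the alternating sum is $0$ for $n\geq2$), and your other input $\LG^{(1)}_{\knot^{(n,0)}}(1,q)=1$ is also false: consistency of \eqref{LGn0} with the theorem forces $\LG^{(1)}_{\knot^{(n,0)}}(1,q)=0$ for $n\geq 2$, and with your two inputs one would in fact derive $\LG^{(n)}(1,q)=1+(-1)^{n-1}n$. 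The paper's route uses exactly this vanishing phenomenon rather than a knot-type normalization: it combines the relation obtained from the unknot (whose $(n,0)$-cable is the unlink, giving $\sum_{k+\ell\leq n-1}m^{(n)}_{k,\ell}A^{(n)}_{k,\ell}=0$) with the cabling formula. The same issue undermines your Alexander step: for links with at least two components $\LG^{(1)}_L(q^\a,1)$ is \emph{not} $\Alex_L(q^{2\a})^2$; the paper instead uses that the left-hand side of \eqref{LGn0} vanishes at $q=1$ for $n\geq 2$ (citing \cite[Cor.6.10]{HarperAlexander}), notes $A^{(n)}_{k,\ell}(q^\a,1)=(-1)^k(k+1)(\br\a/\br{n\a})^2$, and then solves using the unknot relation again; no cable formula for the Alexander polynomial is needed, and the one you gesture at is neither stated precisely nor combined with a correct link-level specialization.

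The genus bound is where your proposal is openly incomplete, and the proposed repair does not work: the symmetry only centers the polynomial (it converts a bound on $\maxdeg_{q^\a}$ into one on the span) and cannot shrink the span, while the $q=1$ specialization constrains nothing about $q^\a$-degrees at generic $q$, so the extraneous $O(1)$ cannot be removed this way. The paper closes exactly this gap by a stronger input: the refined estimate $\maxdeg_{q^\a}\LG^{(1)}_{\knot^{(n,0)}}(q^\a,q)\leq 4n\,\genus(\knot)-2n+2$, coming from the explicit Seifert surface for the cable in \cite{Kohli-Tahar} together with the enhancement of \cite[Thm.3.3]{NVdV2025} (i.e.\ a bound in terms of the Euler characteristic of the cable's Seifert surface, not merely its genus), and then a careful max-degree comparison after multiplying \eqref{LGn0} through by the product of the denominators $\br{n\a+m}$, which yields precisely $\maxdeg_{q^\a}\LG^{(n)}_{\knot}(q^\a,q)\leq 4\,\genus(\knot)$ with no additive error, and hence $\deg_{q^\a}\LG^{(n)}_{\knot}\leq 8\,\genus(\knot)$ via the symmetry. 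Without that refined cable bound (or an equivalent sharpening), your induction only gives $8\,\genus(\knot)+O(1)$, which is strictly weaker than the theorem.
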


The next sequence of polynomials appeared in recent work of two of the authors;
explicitly, $V_n(t,q) \in \BZ[t^{\pm 1}, q^{\pm n/2}]$ was the polynomial defined
in~\cite[Sec.7.3]{GK:multi}. In that work some conjectures were stated about the
symmetries, specializations, and genus bounds of the $V_n$-polynomials, as well as
a relation between $V_1$ and the Links--Gould polynomial. This, together with
our earlier work, suggests the following conjecture.

\begin{conjecture}
\label{conj.1}
For all $n \geq 1$, we have:
\be
\label{LGnVn}
\LG^{(n)}(q^\a,q) = V_n(q^{2\a+n},q^{2}) \in \BZ[q^{\pm 2 \a}, q^{\pm1}] \,.
\ee
\end{conjecture}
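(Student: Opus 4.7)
The plan is to prove Conjecture \ref{conj.1} by induction on $n$, reducing to the case $n=1$ via a cabling argument that mirrors Theorem \ref{thm.n0} on the Nichols-algebra side.

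First, I would establish (or invoke) the base case $\LG^{(1)}(q^\a,q) = V_1(q^{-2\a-1},q^{-2})$. This identity is the only genuine input from comparing the two constructions, and the paper indicates it is essentially known from prior work of two of the authors in \cite{GK:multi}, where a relation between $V_1$ and the Links--Gould polynomial is formulated. All higher $n$ should then follow by cabling.

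Next, I would establish an analog of Theorem \ref{thm.n0cable} on the $V_n$-side. Since $V_n$ is obtained by a Reshetikhin--Turaev construction on a ribbon category of modules over a rank 2 Nichols algebra, the abstract framework of Theorem \ref{thm.n0cable} applies verbatim: if $W$ is the simple projective object whose renormalized invariant recovers $V_1$, then the invariant of the $(n,0)$-cable colored by $W$ equals the sum of the invariants of the knot colored by the simple summands of $W^{\otimes n}$. The concrete task is therefore to show that $W^{\otimes n}$ decomposes as a direct sum of simple summands $W_{k,\ell}$ corresponding to $V(k, n\a+\ell) \in \catsl$ with the same multiplicities $m^{(n)}_{k,\ell}$ governed by the recursion \eqref{mrec} of \cite{Anghel}, and that the ratios of modified dimensions match the coefficients $A^{(n)}_{k,\ell}$ of \eqref{aijn2} after the substitution $t = q^{-2\a-n}$, $q \mapsto q^{-2}$. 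Granting these two facts, one obtains a triangular linear system on each side expressing the cabled $\LG^{(1)}$ (resp.\ cabled $V_1$) in terms of the $\LG^{(k+1)}$ (resp.\ $V_{k+1}$) of the knot for $k \leq n-1$. Inverting these systems and applying the base case to the $n$-cable of $\knot$, the identity at level $n$ propagates.

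The main obstacle, and the reason the paper only settles the conjecture here for $n=2$, is the comparison of tensor product decompositions in the two categories. The cleanest route would be to exhibit a ribbon equivalence between a suitable subcategory of Nichols-algebra modules and the full subcategory of $\catsl$ generated by $V(0,\a)$, after which the matching of multiplicities, modified dimensions, and cabling formulas would all be automatic. Absent such an equivalence, one must decompose $W^{\otimes n}$ directly and verify the multiplicities against $m^{(n)}_{k,\ell}$ and the modified dimensions against \eqref{eq:ModifiedDimension} by hand; the case $n=2$ presumably corresponds to the step where only $W \otimes W$ needs to be decomposed explicitly, and extending the argument to all $n$ is exactly the remaining open problem.
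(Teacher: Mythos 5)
You should first note that the statement you set out to prove is stated in the paper as a conjecture: neither the paper nor your proposal establishes it for all $n$. The paper records the case $n=1$ as a theorem of \cite{GHKKST} (not of \cite{GK:multi}, where the relation was only conjectured --- your attribution of the base case should be corrected) and proves only the case $n=2$ (Theorem~\ref{thm.1}). Your induction scheme is coherent as a plan, but its two ``granted facts'' --- that the Nichols-algebra modules underlying $V_n$ form a ribbon category with a modified trace in which the object $W$ defining $V_1$ is simple projective, that $W^{\otimes n}$ is a direct sum of simples with the multiplicities $m^{(n)}_{k,\ell}$, and that the resulting modified-dimension ratios reproduce the coefficients $A^{(n)}_{k,\ell}$ after the change of variables --- are exactly what is not known, as you yourself acknowledge in the last paragraph. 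So what you have is a strategy with an explicitly open key step, not a proof, and this matches the status of the statement as a conjecture; it should be presented as such rather than as a proof by induction.

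For the one case settled beyond $n=1$, the paper's route differs from your sketch in a way worth noting. It develops no categorical structure on the $V$-side at all (no modified trace, no decomposition of $W^{\otimes 2}$ into simple projectives, no modified dimensions there). Instead, the proof of Theorem~\ref{thm.1} works directly with the $R$-matrix of $V_1$: it takes the $(2,1)$-cable of a zero-writhe long knot rather than the $(2,0)$-cable, decomposes the braiding into three eigenprojectors with known eigenvalues, uses Schur's lemma (via property (P1) of \cite{GHKST}) to write the three resulting endomorphisms as scalars $x_i$, and determines the $x_i$ not as ratios of modified dimensions but by solving the small linear system coming from the vanishing of closed diagrams and the two Reidemeister~1 moves. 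The resulting cabling formula \eqref{eq.Vcable} is then compared, after the substitution $(t,\tq)=(q^{-2\a-1},q^{-2})$, with the $n=2$ instance of the LG-side $(2,1)$-cabling formula of Remark~\ref{rem.n1} (proven for $n\le 5$ by explicit computation) together with the $n=1$ identification of \cite{GHKKST}. This skein-theoretic determination of the coefficients is precisely what lets the paper bypass the Nichols-algebra-side representation theory your plan requires; conversely, if you could verify your categorical inputs (or exhibit the ribbon equivalence you mention), your argument would yield the conjecture for all $n$ at once, whereas the paper notes that pushing its own method to general $n$ would require computing $6j$-symbols.
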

Conjecture~\ref{conj.1} was proven for $n=1$ by showing that $V_1$ and $\LG^{(1)}$
satisfy a common skein theory that uniquely determines them~\cite{GHKKST}. We emphasize that our convention for the polynomial variables $q^\a$ and $q$ here are inverse to the conventions of previous papers related to Links--Gould because of reorganization of the representation theory of $\U_q^H\mathfrak{sl}(2|1)$.

Regarding the case of $n>1$, Theorem~\ref{thm.n0cable} gives
a cabling formula for $\LG^{(n)}$ in terms of $\LG^{(1)}$. The
spectral decomposition of the image of the braid $\sigma_{n-1} \sigma_{n-2} \ldots \sigma_2 \sigma_1$ under the representation derived from the $R$-matrix of the $V_1$-polynomial gives a cabling
formula for $V_n$ in terms of $V_1$ with some coefficients that are independent
of the knot. When $n=2$, we can compute these coefficients explicitly and this
implies the following.

\begin{theorem}
\label{thm.1}
Conjecture~\ref{conj.1} holds for $n=2$.
\end{theorem}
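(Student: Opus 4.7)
The plan is to combine Theorem~\ref{thm.n0} with a parallel cabling formula for $V_2$ coming from the spectral decomposition of the $R$-matrix of~\cite{GK:multi} on $V_1\otimes V_1$, and then to use the already-established $n=1$ case of Conjecture~\ref{conj.1} from~\cite{GHKKST} to bridge the two sides. Specializing Theorem~\ref{thm.n0} to $n=2$, only the three pairs $(k,\ell)\in\{(0,0),(0,1),(1,0)\}$ contribute, and the recursion~\eqref{mrec} with $m^{(1)}_{0,0}=1$ yields $m^{(2)}_{0,0}=m^{(2)}_{0,1}=m^{(2)}_{1,0}=1$. This gives
\[
\LG^{(1)}_{\knot^{(2,0)}}(q^\a,q)=A^{(2)}_{0,0}\,\LG^{(1)}_\knot(q^{2\a},q)+A^{(2)}_{0,1}\,\LG^{(1)}_\knot(q^{2\a+1},q)+A^{(2)}_{1,0}\,\LG^{(2)}_\knot(q^{2\a},q).
\]

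On the $V$-side, the analogue of Theorem~\ref{thm.n0cable} applied to the decomposition of $V_1\otimes V_1$ into its three simple summands, realized by the spectral projectors of the image of $\sigma_1$ under the $R$-matrix of~\cite{GK:multi}, gives an identity expressing $V_1$ of the $(2,0)$-cable of $\knot$ at parameter $t$ as
\[
c_0(t,q)\,V_1(t_0,q^{-2})_\knot+c_1(t,q)\,V_1(t_1,q^{-2})_\knot+c_2(t,q)\,V_2(t_2,q^{-2})_\knot,
\]
with knot-independent scalars $c_i$ and shifted parameters $t_i$ read off from the eigenvalues of $\sigma_1$ on $V_1^{\otimes 2}$ and the modified dimensions of the summands. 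For $n=2$ one expects $t_0=t^2q$, $t_1=t^2q^{-1}$, and $t_2=t^2$, all of which are computable in closed form from the explicit $R$-matrix.

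Setting $t=q^{-2\a-1}$ and invoking the $n=1$ case of Conjecture~\ref{conj.1}, the left-hand side becomes $\LG^{(1)}_{\knot^{(2,0)}}(q^\a,q)$; since $t^2q=q^{-2(2\a)-1}$ and $t^2q^{-1}=q^{-2(2\a+1)-1}$, the two $V_1$-terms on the right become $\LG^{(1)}_\knot(q^{2\a+\ell},q)$ for $\ell=0,1$; and $V_2$ is evaluated at $t^2=q^{-4\a-2}$, which is exactly the value at which Conjecture~\ref{conj.1} for $n=2$ predicts equality with $\LG^{(2)}_\knot(q^{2\a},q)$. Comparing term-by-term with the identity of the first paragraph, the $V_1$-versus-$\LG^{(1)}$ agreements hold automatically by the known $n=1$ case, so once one checks $c_i|_{t=q^{-2\a-1}}=A^{(2)}_{k,\ell}(q^\a,q)$ for the three indices, the $V_2$-term is forced to equal the $\LG^{(2)}$-term, which is precisely \eqref{LGnVn} for $n=2$.

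The main obstacle is the explicit computation on the $V$-side: one must decompose $V_1\otimes V_1$ into its three simple summands in the Nichols-algebra category, diagonalize the image of $\sigma_1$ to read off the projectors, compute the corresponding modified dimensions in closed form, and verify that after the substitution $t=q^{-2\a-1}$ the resulting three scalars coincide with the explicit $A^{(2)}_{k,\ell}(q^\a,q)$ of~\eqref{aijn}. This is a finite, knot-independent calculation involving only the spectral data of $V_1\otimes V_1$ in the category of~\cite{GK:multi}, and once it is carried out the theorem follows formally.
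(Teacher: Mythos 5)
Your overall strategy---two parallel cabling formulas, the $n=1$ identity from \cite{GHKKST} to identify the left-hand sides and the $V_1$/$\LG^{(1)}$ terms, and a knot-independent coefficient match that then forces the $V_2$ term to equal the $\LG^{(2)}$ term---is the same as the paper's. The genuine gap is that you defer exactly the step that constitutes the paper's actual work (Section~\ref{sub.thm1}): producing the $V$-side cabling formula with its coefficients and spectral parameters. You propose to read the coefficients off from ``the modified dimensions of the summands'' in the category of \cite{GK:multi}; the paper explicitly notes that this structure is \emph{not} available on the $V$-side (this is its stated reason why the proof of \eqref{eq.Vcable} diverges from the modified-dimension argument behind Remark~\ref{rem.n1}), and instead it pins down the three scalars $x_i$ by a small linear system: vanishing of closed diagrams gives $x_1+x_2+x_3=0$, and the Reidemeister~1 relations for a positive and a negative kink, weighted by the $R$-matrix eigenvalues $t^{-1}\tq^{1/2}$, $t\tq^{1/2}$, $-1$, give two more equations with a unique solution. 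Note that this determination itself uses the twisted closure, which is one reason the paper works with the $(2,1)$-cable and compares against the $n=2$ case of \eqref{LGn1} rather than against Theorem~\ref{thm.n0}. Your spectral parameters are also only ``expected'': the paper computes $Y_1(t)\otimes Y_1(t)\cong Y_1(t^2)\oplus Y_1(t^2\tq^{-1/2})\oplus Y_2(t^2\tq)$, which is not the list $t^2q,\,t^2q^{-1},\,t^2$ you posit; since your argument is a term-by-term comparison, these shifts must be derived from the explicit spectral decomposition, not guessed to fit the conjecture, or the matching claim is circular.

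A second, smaller issue with the $(2,0)$-route: $\knot^{(2,0)}$ is a two-component link, so you invoke the $n=1$ equality $V_1=\LG^{(1)}$ for links (and the well-definedness of $V_1$ of a link via cutting one component), whereas the paper's $(2,1)$-cable is a knot and only the knot-level statement of \cite{GHKKST} is needed; this is fixable but must be addressed, together with the nonvanishing of the coefficient multiplying the $V_2$ term. None of this is a conceptual dead end---the three scalars are computable as partial closures of explicit finite-size projectors, and your route has the advantage of resting on Theorem~\ref{thm.n0} rather than on the computationally verified $n=2$ case of Remark~\ref{rem.n1}---but as written the proposal asserts, rather than proves, precisely the finite computation on which the theorem turns.
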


Conjecture~\ref{conj.1} and Theorem~\ref{thm.LG} imply that all the polynomial
invariants $V_n$ of a knot are determined by (and conversely, determine) the
$V_1$-invariant of a knot and its parallels. Moreover, they imply that 
the $V_n$ polynomials of a knot satisfy the symmetries and specializations
\be
\label{Vnspec}
V_n(t,\tq) = V_n(t^{-1},\tq), \qquad
V_n(\tq^{n/2},\tq) = 1, \qquad
V_n(t,1) = \Alex(t)^2 
\ee
and that the genus bounds 
\be
\label{Vngenus}
\deg_t V_{n,\knot}(t,q) \leq 4 \, \text{genus}(\knot) 
\ee
hold for all $n \geq 1$ and all knots $\knot$. 

A reverse consequence of Conjecture~\ref{conj.1} is that
$\LG^{(n)}(q^\a,q) \in\mathbb{Z}[q^{\pm2\alpha},q^{\pm1}]$ (proved by Ishii~\cite{Ishii}
for $n=1$) even though the coefficients of the $R$-matrix for $\LG^{(n)}$ are in
$\mathbb{Z}[q^{\pm \alpha},q^{\pm1}]$. 

A remarkable aspect of the genus bound~\eqref{Vngenus} is that it is independent
of $n$, and although $V_n$ fails to detect mutation when $n=1$ (for instance on the
mutant pair of 11 crossing knots with trivial Alexander polynomial that have genera 2 and
3 respectively), extensive computations of the $V_2$-polynomial imply the
following result. 

\begin{proposition}[\cite{GL:patterns}]
When $n=2$, Equation \eqref{Vngenus} is an equality for all $1701936$ prime knots
with up to $16$ crossings, and for infinitely many pairs of mutant knots of the
Kinoshita-Terasaka family.
\end{proposition}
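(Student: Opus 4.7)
The strategy is to reduce everything to explicit computation of the $V_1=\LG^{(1)}$ invariant (and closely related data) on a concrete, finite family of knot diagrams, together with the cabling formula supplied by Theorems~\ref{thm.n0} and~\ref{thm.1}. Throughout, the comparison invariant is the genus, for which tabulated values are available in standard databases (e.g.\ KnotInfo and the 16-crossing census) and, for satellites of the Kinoshita--Terasaka family, can be computed from a Seifert surface coming from a genus-minimising Seifert algorithm or Gabai's sutured manifold theory.

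First I would compute $\deg_t V_{2,\knot}(t,q)$ for each of the $1701936$ prime knots $\knot$ with at most $16$ crossings. By Theorem~\ref{thm.1} and Theorem~\ref{thm.n0} applied with $n=2$, this reduces to computing $\LG^{(1)}_{\knot^{(2,0)}}(q^\a,q)$ and $\LG^{(1)}_\knot(q^{2\a+\ell},q)$ for $\ell\in\{0,1\}$, then using the explicit decomposition \eqref{LGn0}--\eqref{aijn} and the recursion \eqref{mrec} to solve linearly for $\LG^{(2)}_\knot$. The input diagrams (in PD or DT code) are taken from the standard tables. The $R$-matrix for $V(0,\a)$ acts on a $4$-dimensional weight module, so the $(2,0)$-cable is evaluated by a straightforward Reshetikhin--Turaev state sum on a $16$-dimensional cabled space; this is entirely mechanical. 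One then reads off the $q^\a$-degree span of $\LG^{(2)}_\knot(q^\a,q)$, converts to the $t$-degree via $t=q^{-2\a-2}$ as in Conjecture~\ref{conj.1}, and compares with $4\,\genus(\knot)$ taken from the tables. That the resulting degrees attain equality in every single case is the content of the first half of the statement.

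Second, for the infinite family of Kinoshita--Terasaka mutant pairs, I would exhibit a one-parameter family (e.g.\ the twist-parameter family generalising the classical $11$-crossing KT pair and its Conway mutant) and either compute $V_2$ symbolically along the family or use a recursion: the family is obtained by repeatedly inserting a full twist into a tangle, and the image of a full twist in the relevant tensor-product representation decomposes into a fixed, knot-independent spectral sum. This reduces $V_{2,\knot_m}$ for the $m$-th member of the family to a finite linear combination (in $m$) of a few fixed polynomials in $t,q$, so that the top-degree coefficient in $t$ stabilises and can be checked to be non-zero for all $m$. Matching this against the known genus of the family (which grows linearly in $m$ and is computable by standard Seifert surface / fiber surface arguments, as in \cite{GL:patterns}) then gives equality on the whole infinite family.

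The main obstacle is purely computational: brute-force evaluation of a $(2,0)$-cable RT state sum on a $16$-crossing diagram involves a state sum on a $32$-crossing link with a $4$-dimensional label, and doing this $1.7\times 10^6$ times demands serious engineering (sparse $R$-matrices, tensor contraction order optimisation, parallelisation over the knot table, and possibly a change of basis diagonalising the image of the cabling braid $\sigma_1$ to exploit the spectral decomposition used in the proof of Theorem~\ref{thm.1}). The mathematical content beyond Theorems~\ref{thm.n0}--\ref{thm.1} and the genus tables is minimal; the Kinoshita--Terasaka part is the only place where an argument rather than a table lookup is needed, and there the hard step is certifying non-vanishing of the leading coefficient uniformly in the family, which the spectral form of the twist insertion makes tractable.
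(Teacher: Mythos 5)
This proposition is not proved in the paper at all: it is imported verbatim from \cite{GL:patterns}, where it rests on large-scale machine computation of $V_2$ performed directly from its defining $R$-matrix (an $8$-dimensional object, as in \cite[Sec.7.3]{GK:multi}), compared against the known genera of the census knots, together with a separate analysis of the Kinoshita--Terasaka/Conway families whose genera are known from Gabai's work. Your plan is a legitimate alternative route made possible by Theorems~\ref{thm.n0} and~\ref{thm.1} of this paper: compute $\LG^{(1)}$ of the knot and of its $(2,0)$-parallel, solve \eqref{LGn0} for $\LG^{(2)}$ (possible since $m^{(2)}_{1,0}=1$ and $A^{(2)}_{1,0}\neq 0$ generically), and convert to $V_2$. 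What this buys is that only the $4$-dimensional $R$-matrix is ever needed; what it costs is a much heavier computation than the direct one --- note in particular that a $(2,0)$-cable of a $16$-crossing diagram has $4\cdot 16=64$ crossings, not $32$, so the state sum is on a $64$-crossing diagram with $16$-dimensional strand labels, repeated $1.7\times 10^6$ times. Two inputs you treat as routine deserve flagging: the statement is an \emph{equality}, so you need the genus of every knot in the census (this is indeed available for the $\leq 16$-crossing tables, and is exactly the external input used in \cite{GL:patterns}), and for the KT part you need the genera of the infinite family (Gabai), plus a genuine argument that the top $t$-coefficient of $V_2$ does not vanish along the family; your twist-insertion/spectral idea is the right shape for that, but as written it is a sketch rather than a proof, and it is the only place where your proposal would still require real mathematical work rather than bookkeeping.
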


\begin{remark}
Note that there are three sets of variables used in the literature, namely
$(t_0,t_1)$ introduced by Ishii~\cite{Ishii}, $(q^\a,q)$ used in the
context of representation theory e.g. to study $\LG$~\cite{Links-Gould, Kohli-Tahar}
and $(t,\tq)$ used in~\cite{GK:multi}. This is a point that leads to much confusion.
The relations between these different sets of variables are
\be
\label{rosetta}
(t_0, t_1)= (q^{2\a}, q^{-2(\a+1)}), \qquad
(t, \tq)= (q^{2\a+1}, q^{2}), \qquad (q^\a,q) = (t^{1/2} \tq^{-1/4}, \tq^{1/2}) \,. 
\ee
With these changes of variables, Equation~\eqref{LGnVn} becomes

\be
\label{GLnVn}
\LG^{(n)}(q^\a,q) = V_n(t \tq^{(n-1)/2}, \tq) \,.
\ee
\end{remark}

Throughout this article we always consider $\LG^{(n)}$ as a Laurent polynomial in the
variables $(q^\a,q)$ and $V_n$ as a Laurent polynomial in the variables $(t,\tq)$.\\

\subsection*{Acknowledgements}
~The authors thank John Links and Daniel L\'opez Neumann for comments on a previous version of this paper.
MH was partially supported through the NSF-RTG grant
\#DMS-2135960. The work of RK is partially supported by the SNSF research program NCCR The Mathematics of Physics (SwissMAP), and the SNSF grants no. 200021-232258 and no. 200020-200400. BMK was partially supported through the BJNSF grant \#IS24066.


\section{Basics on ribbon categories}
\label{sec.proofn0}

\subsection{Preliminaries}
\label{sub.cat}
In this section we review some basic notions on categories that can be found in~\cite{EGNO}.

To begin with, a (small) category $\cat$ consists of a set $\calC_0$ of objects $\{V\}$,
a set $\cat_1$ of morphisms $\{f: V \to W\}$, identity elements
$\{ \mathrm{id}_V: V \to V\}$ and an associative composition law
$f, g \mapsto g \circ f$ for morphisms $V \stackrel{f}{\to} W$ and
$W \stackrel{g}{\to} U$. 

If $\BK$ is a field, a category is \emph{$\BK$-linear} if the
set of morphisms between two objects is a $\BK$-vector space and composition of
morphisms is $\BK$-bilinear.

A category $\cat$ is \emph{monoidal} if it is equipped with a tensor product functor
$\otimes:\cat\times\cat\to\cat$, a monoidal unit
$\unit \in \cat$, as well as unitor and associator maps satisfying certain coherence
axioms \cite[\S2.1]{EGNO}.
Here we assume that $\otimes$ is $\BK$-bilinear and the
$\BK$-algebra map
$\BK \to \End_{\cat}(\unit), k \mapsto k \cdot \id_\unit$ is an isomorphism.

A monoidal category $\cat$ is \emph{rigid} if for each object $V\in\cat$ there is
a dual object $V^*\in\cat$. The duality structure maps are denoted
\be
\begin{aligned}
\lev_V&:V^{\ast} \otimes V \to \unit,
& \lcoev_V&:\unit \to V  \otimes V^{\ast}\,,
\\ \rev_V&:V\otimes V^{\ast}  \to \unit,
& \rcoev_V&: \unit \to V^{\ast}  \otimes V\,.
\end{aligned}
\ee

A rigid category $\cat$ is \emph{pivotal} if there exists a natural isomorphism
$\vphi=\br{\vphi_V:V\to(V^*)^*}$.

A monoidal category $\cat$ is \emph{braided} if there exists a natural isomorphism
$c=\{c_{V,W}: V \otimes W \rightarrow W \otimes V\}_{V,W \in \cat}$ called a braiding
satisfying two hexagon relations \cite[\S8.1]{EGNO}.
In essence, the hexagon relations state that braiding $V$ with two objects
sequentially is the same as braiding $V$ with their tensor product.

A braided rigid category $\cat$ is \emph{ribbon} if there exists a natural
isomorphism $\theta=\{\theta_V: V \rightarrow V\}_{V \in \cat}$ called a twist which
satisfies the relations
\begin{align}
\th_{V\otimes W} = c_{W,V}\circ c_{V,W}\circ (\th_V\otimes \th_W)\,,&&
\th_{\unit}=\id_{\unit}\,,&&
\th_{V^*}=\th^*_V\,.
\end{align}
Note that a ribbon category is necessarily pivotal.

\subsection{Indecomposable, simple and projective objects}
\label{sub.proj}

In this section, we discuss the indecomposable, simple, and
projective objects of a $\BK$-linear ribbon category $\cat$.

An object $W$ is a \emph{direct sum} if there exist objects and morphisms $\{(V_i,f_i,g_i)\}_{i\in I}$ where $f_i:V_i\to W$ and
$g_i:W\to V_i$ such that $\id_W= \sum f_i\circ g_i$ and
$g_i\circ f_j = \delta_{ij}\cdot \id_{V_i}$. We then write $W\cong \bigoplus V_i$.

An object $Q$ is a \emph{subobject} of $P$ if it is isomorphic to the monomorphic
image of $Q\hookrightarrow P$. We will typically identify $Q$ with its image in $P$.

An object $0\in \cat$ is a \emph{zero object} if $\Hom(0,V)=\Hom(V,0)=\br{0}$, i.e. the zero vector space, for every object $V\in\cat$. A zero object is essentially unique, with the unique isomorphism between any two zero objects given by the zero map. 

An object $W$ is \emph{indecomposable} if there do not exist non-zero subobjects $U$ and $V$ of $W$ such that $W\cong U\oplus V$. In other words, $W$ is indecomposable if whenever $W$ is isomorphic to a direct sum of $U$ and $V$, then at least one of $U$ or $V$ is the zero object.

An object $V$ of a $\BK$-linear category $\cat$ is \emph{simple} if
$\End_\cat(V)\cong\BK \cdot \id_V$. If $V$ is a simple object and $f\in\End_\cat(V)$, then
we may write $f=\la f\ra \cdot \id_V$\,.  Note that a simple object is indecomposable but not conversely. 

An object $W$ is \emph{semisimple} if it is isomorphic to a direct sum of simple objects.

A $\BK$-linear category $\cat$ is \emph{semisimple} if every object of $\cat$ is semisimple.

We call a $\BK$-linear category $\cat$ \emph{locally-finite} if for every pair of objects
$U,V \in \cat$, $\Hom_{\cat} (U,V )$ is a finite dimensional
$\BK$-module \emph{and} every object has a finite length composition series. 

An object $V$ in $\cat$ is \emph{projective} if for every epimorphism
$f: W \twoheadrightarrow Z$, and every morphism $g: V \to Z$, there exists a (not-necessarily unique)
lift $g' : V \to W$ such that $g=f \circ g'$.

We say that $\cat$ \emph{has enough projectives} if every object is the epimorphic
image of a projective object. The projective cover of an object $V$ is a pair
$(P,p)$ where $P$ is a projective object and $p:P\twoheadrightarrow V$ such that
$\ker p$ is \emph{superfluous}, i.e. for any subobject $Q$ of $P$, 
\begin{align}
\ker p + Q=P \quad \mbox{implies} \quad Q=P\,. 
\end{align}
If $V$ is simple, then $P$ is indecomposable and has a unique simple subobject which
we call the \emph{socle} of $P$.

Finally, we say that $\cat$ is \emph{unimodular} if the socle of the projective
cover of $\unit$ is isomorphic to $\unit$. 

\subsection{Modified trace}
\label{sub.modified}

In this section we discuss the quantum dimension and the modified dimension of
special objects of a $\BK$-linear ribbon category $\cat$. Additional details can be
found in \cite{GKPM11,CGP,GPT}.

Fix objects $V , W \in \cat$. The \emph{quantum dimension} of $V$ is
$\mathrm{qdim}(V)=~\rev_V\circ \lcoev_V ~=~ \lev_V\circ\rcoev_V$. When $V$ is a
typical module of a Lie superalgebra, the quantum dimension is zero.

The \emph{right partial trace} along $W$ is the map
\be
\begin{aligned}
\ptr_W : \End_{\cat}(V \otimes W) & \rightarrow  \End_{\cat}(V) \\
f & \mapsto  (\id_V \otimes \rev_W) \circ (f \otimes \id_{W^{\ast}})
\circ (\id_V \otimes \lcoev_W).
\end{aligned}
\ee
For any endomorphism $f\in\End_\cat(V\otimes V)$, set
\be
\begin{aligned}
  \trl(f) &= (\lev_V \otimes \id_V ) \circ (\id_{V^*} \otimes f )
  \circ (\rcoev_V \otimes  \id_V ) \in \End_\cat(V)\,,\\
  \trr(f) &= (\id_V \otimes \rev_V ) \circ (f \otimes \id_{V^*})
  \circ (\id_V \otimes \lcoev_V ) \in\End_\cat(V)\,.
\end{aligned}
\ee
An object $V$ of $\calC$ is called \emph{ambidextrous} if $\trl(f) = \trr(f)$ for
all $f \in\End_\cat(V\otimes V)$.

A full subcategory $\calI \subset \cat$ is an {\em ideal} if it has the following
properties.
\begin{enumerate}
\item
  If $W\in \mathcal{I}$ and $V \in \cat$, then $W\otimes V \in \mathcal{I}$.
\item
  If $W \in \mathcal{I}$ and $V \in \cat$
  is a direct summand, then $V\in \mathcal{I}$.
\end{enumerate}

The full subcategory $\calP$ of projective objects of $\calC$ is an ideal. 
In fact, in~\cite[Prop.1.2]{Geer:invariant-trace}, the ideal $\calI_V$ generated
by a typical module of a Lie superalgebra was shown to be independent of $V$ and
to coincide with $\calP$.

A \emph{modified trace} on an ideal $\calI \subset \cat$ is a family of
$\BK$-linear functions 
\be
\mt=\{\mt_V:\End_\cat(V) \rightarrow \BK \mid V \in \mathcal{I} \}
\ee
with the following properties.
\begin{enumerate}
\item \emph{Cyclicity}:
  For all $V,W \in \mathcal{I}$ and $f \in \Hom_{\cat}(W,V)$ and
  $g \in \Hom_{\cat}(V,W)$, there is an equality $\mt_V(f \circ g)=\mt_W(g \circ f)$.
\item \emph{Partial trace property}:
  For all $V \in \mathcal{I}$, $W \in \cat$ and $f\in\End_{\cat}(V\otimes W)$,
  there is an equality $\mt_{V\otimes W}(f)=\mt_V(\ptr_W(f))$.
\end{enumerate}

The \emph{modified dimension} of $V\in \mathcal{I}$ is $\md(V)=\mt_V(\id_V)$.
While the quantum dimension of a typical Lie superalgebra is zero, its modified
dimension is not.

\subsection{Proof of Theorem \ref{thm.n0cable}}

Fix a $\BK$-linear, locally-finite, unimodular, ribbon category $\cat$
with enough projectives. Let $\proj\subset \cat$ denote the ideal of
projective objects.

\begin{proposition}[{\cite[Cor.5.6]{GKPM22}}]\label{prop.normalize}
Up to a global scalar, there exists a unique modified trace $\mt$ on $\proj$.
\end{proposition}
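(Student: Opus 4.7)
The plan is to follow the strategy of \cite{GKPM22}: reduce the problem to analyzing traces on the finite dimensional algebra $A := \End_\cat(P_\unit)$, where $P_\unit$ is the projective cover of the unit, and then exploit unimodularity to pin down this space up to scalar. First I would show that every modified trace $\mt$ on $\proj$ is determined by the single linear functional $\lambda := \mt_{P_\unit}$ on $A$. Indeed, since $\cat$ is locally-finite and has enough projectives, each indecomposable projective $P$ appears as a direct summand of $P_\unit \otimes X$ for some $X \in \cat$ (take $X$ so that its top surjects onto the simple top of $P$, and use projectivity of $P$ to obtain a retract). Applying cyclicity to the idempotent $e \in \End_\cat(P_\unit \otimes X)$ picking out the summand $P$, together with the partial trace property, yields the formula $\mt_P(f) = \lambda(\ptr_X(efe\circ(\iota\otimes \id_X)))$ for $f \in \End_\cat(P)$. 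Thus $\mt$ is entirely recovered from $\lambda$.

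Next I would translate cyclicity and the partial trace property into intrinsic conditions on $\lambda: A \to \BK$. Cyclicity of $\mt_{P_\unit}$ says $\lambda$ is a trace on the algebra $A$. The partial trace condition is more subtle: for each $V \in \cat$, the map $g \mapsto \ptr_V(g)$ sends $\End_\cat(P_\unit \otimes V) \to A$, and the constraint is that $\lambda \circ \ptr_V$ extends consistently to a trace on every $\End_\cat(P_\unit \otimes V)$, compatibly with the pivotal/ribbon structure. Phrased categorically this is the statement that $\lambda$ makes $A$ into a symmetric Frobenius form in the appropriate categorical sense.

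The heart of the argument uses unimodularity. Since the socle of $P_\unit$ is $\unit$, there is a canonical inclusion $\iota:\unit \hookrightarrow P_\unit$ and, because $\cat$ is unimodular and pivotal, a canonical projection $\pi: P_\unit \twoheadrightarrow \unit$ onto the top. The composition $\lambda_0(f) := \pi\circ f\circ \iota \in \End_\cat(\unit)=\BK$ is a nonzero symmetric trace. I would then argue that any functional $\lambda$ compatible with cyclicity and partial trace must agree with $\lambda_0$ on the Jacobson-type filtration of $A$: the cyclicity forces $\lambda$ to vanish on commutators, so $\lambda$ factors through $A/[A,A]$, and the partial trace compatibility forces $\lambda$ to vanish on the ideal generated by noninvertible endomorphisms other than those hitting the socle. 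What remains is a one-dimensional quotient, spanned by the class of $\lambda_0$. Existence follows by reversing Step 1: defining $\mt_P(f) := \lambda_0(\ptr_X(efe))$ and checking well-definedness, cyclicity and partial trace using the Frobenius property of $\lambda_0$.

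The main obstacle is Step 3, namely showing that unimodularity combined with the partial trace property truly forces $\lambda$ to be proportional to $\lambda_0$. This requires knowing that the trace-form on $A$ induced by any modified trace is non-degenerate with socle dual to the top, which is precisely the structural content of unimodularity in the categorical setting. Making this precise, as carried out in \cite{GKPM22}, requires some care with the interplay between the ribbon structure, the self-duality of $P_\unit$ (a consequence of unimodularity), and the fact that $\proj$ is generated as a two-sided ideal by $P_\unit$.
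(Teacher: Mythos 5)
First, note that the paper does not prove Proposition~\ref{prop.normalize} at all: it is imported verbatim from \cite[Cor.5.6]{GKPM22}, and the only thing the paper itself verifies is that the hypotheses (locally finite, pivotal, enough projectives, unimodular) hold for $\catsl$, via the explicit projective cover $P$ of $\unit$ in Figure~\ref{fig.proj} (see Lemma~\ref{lem.cover} and Proposition~\ref{prop.tr}). So your proposal is an attempt to reprove the cited result, not a reconstruction of anything in this paper.

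As a standalone proof it has a genuine gap, and it is exactly where you say it is. Step 1 is fine: every projective $P$ is a retract of $P_\unit\otimes P$ (split the epimorphism $P_\unit\otimes P\to\unit\otimes P\cong P$ using projectivity), and cyclicity plus the partial trace property then determine $\mt_P$ from $\lambda=\mt_{P_\unit}$. But Step 3, the heart of uniqueness, is asserted rather than proved. Cyclicity only says $\lambda$ kills commutators of $A=\End_\cat(P_\unit)$, and the space of such symmetric functionals has dimension $\dim A/[A,A]$, which is in general much larger than one; the entire content of the theorem is that the partial trace constraints, ranging over all $V\in\cat$ and interacting with the pivotal structure, cut this space down to the line spanned by $f\mapsto p\circ f\circ\iota$. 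Your phrase ``the partial trace compatibility forces $\lambda$ to vanish on the ideal generated by noninvertible endomorphisms other than those hitting the socle'' is not a proof and is not even a precise statement; in \cite{GKPM22} this step is carried out by identifying m-traces with a Hom-space attached to the distinguished invertible object (equivalently, a Nakayama/relative Serre functor argument), and unimodularity is used precisely to make that object trivial and the Hom-space one-dimensional. You explicitly defer this point to \cite{GKPM22}, so the proposal does not close the argument. Existence has the same status: defining $\mt_P(f):=\lambda_0(\ptr_X(\iota_P f\pi_P))$ requires checking independence of the chosen embedding $P\hookrightarrow P_\unit\otimes X$ and verifying cyclicity and the partial trace property, none of which is done. (Two smaller points: the surjection $\pi:P_\unit\twoheadrightarrow\unit$ is just the projective cover map and needs no unimodularity — unimodularity is only the statement about the socle giving $\iota$; and your displayed formula $\mt_P(f)=\lambda(\ptr_X(efe\circ(\iota\otimes\id_X)))$ should simply read $\lambda(\ptr_X(\iota_P\circ f\circ\pi_P))$.)
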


For a simple object $V\in\proj$, let $\mRT_{V}$ denote the
\emph{renormalized Reshetikhin--Turaev invariant} of links whose components are
all colored by $V$. If $L$ is a framed link given by the closure of a $(1,1)$-tangle
$T$, then
\begin{align}
    \mRT_{V,L}=\la\RT_{V,T}\ra\cdot \md(V) = \mt(\RT_{V,T})
\end{align}
where $\RT_{V,T}=\la\RT_{V,T}\ra \cdot \mathrm{id}_V$ is the image of $T$ under
the usual RT functor. The scalar noted in Proposition \ref{prop.normalize}
determines the normalization of the unknot for $\mRT_V$.

Note that $\mRT$ is a well-defined invariant of links whose components may be
colored by different simple projective objects, but this generalization will not
be considered in this article.

\begin{lemma}
\label{lem.11}  
Let $T$ be a $(1,1)$-tangle with no closed components, i.e., a long knot.
If $V, W\in\cat$ and $f\in \Hom_{\cat}(V,W)$, then 
\be
\label{FWT}
\RT_{W,T}\circ f = f\circ \RT_{V,T}
\ee
\end{lemma}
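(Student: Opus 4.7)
The statement is the naturality of the assignment $V\mapsto \RT_{V,T}$ with respect to morphisms $f\in\Hom_\cat(V,W)$, and it should follow essentially from the functoriality of the Reshetikhin--Turaev construction together with the naturality of the basic structure maps (braiding, duality, twist) in the ribbon category $\cat$.

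Concretely, my plan is to introduce an $f$-labeled coupon at a chosen point $p$ along the single strand of $T$, producing a $(1,1)$-tangle $T^f_p$ whose strand is colored by $V$ below $p$ and by $W$ above $p$. Applying the RT functor then yields a morphism $F(T^f_p)\in\Hom_\cat(V,W)$. When $p$ lies in a small neighborhood of the bottom endpoint, one has $F(T^f_p) = \RT_{W,T}\circ f$, whereas placing $p$ near the top endpoint gives $F(T^f_p) = f\circ \RT_{V,T}$. The lemma therefore reduces to showing that $F(T^f_p)$ is independent of the position of $p$ along the strand.

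To establish this, I would decompose $T$ into elementary tangles (identities, positive and negative crossings, left/right evaluations and coevaluations, and framing twists) and verify that $p$ may be slid past each elementary piece without changing $F(T^f_p)$. Sliding past a crossing is the naturality of the braiding $c_{X,Y}$; sliding past a cup or a cap invokes the naturality of $\rev$, $\lev$, $\rcoev$, $\lcoev$, and under such a move the label of the coupon is transported between $f$ and its dual $f^\ast\colon W^\ast\to V^\ast$; sliding past a framing twist uses the naturality of $\theta$. Since $T$ has no closed components, the strand is a single connected arc from the bottom endpoint to the top endpoint, so $p$ can be moved continuously along this entire arc.

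The main bookkeeping obstacle is tracking the dualization $f\leftrightarrow f^\ast$ that arises each time $p$ crosses a cup or a cap, because the orientation of the strand at $p$ flips at such points. Both endpoints of the long knot carry the same (original) orientation, however, so the number of flips along the arc is even; the pivotal structure of $\cat$ (implicit in the ribbon axioms via $\vphi$) then identifies $f^{\ast\ast}$ with $f$ and guarantees the consistency of the sliding, so that upon arriving at the top endpoint we recover the original $f$ rather than some dualized variant.
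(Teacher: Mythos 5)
Your proposal is correct and is exactly the argument the paper has in mind: the paper's entire justification is the one-line remark that the lemma follows from naturality of the braiding and duality morphisms, and your coupon-sliding argument (with the even number of orientation flips handled by the pivotal structure) is the standard detailed expansion of that statement.
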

The lemma is a consequence of the naturality of the braiding and duality morphisms.

\begin{proof}[Proof of Theorem~\ref{thm.n0cable}]
Let $\knot$ be a knot and $V\in\proj$ simple. As indicated in \cite[Prop.18]{GPT},
\be
\mRT_{V,\knot^{(0,n)}}=\mRT_{V^{\otimes n}, \knot}\,.
\ee
Fix a collection of maps $f_i:V_i\to V^{\otimes n}$ and $g_i:V^{\otimes n}\to V_i$
witnessing the isomorphism, i.e. $\sum_i f_i\circ g_i=\id_{V^{\otimes n}}$ and
$g_i\circ f_j=\delta_{ij}\, \id_{V_i}$. Let $\tangle$ be a $(1,1)$-tangle with closure
$\knot$. Then 
\be
\begin{aligned}
  \mRT_{V,\knot^{(n,0)}} &= \mRT_{V^{\otimes n},\knot} =
  \mt_{V^{\otimes n}}(\RT_{V^{\otimes n},
\tangle}) =
  \mt_{V^{\otimes n}}\left(\RT_{V^{\otimes n}, \tangle}\circ
\left(\sum_i f_i\circ g_i\right)\right)
\\
&=\sum_i \mt_{V^{\otimes n}}\left( f_i \circ \RT_{V_i, \tangle}\circ g_i \right)
=\sum_i \la \RT_{V_i, \tangle}\ra 
\mt_{V^{\otimes n}}\left(f_i\circ g_i\right)
\\
&=
\sum_i \la \RT_{V_i, \tangle}\ra \mt_{V_i}\left(g_i\circ f_i\right)
=
\sum_i \la \RT_{V_i, \tangle}\ra \cdot \mt_{V_i}\left(\id_{V_i}\right)
=
\sum_i \mRT_{V_i, \knot}
\end{aligned}
\ee
where the first equality in the second line uses Lemma~\ref{lem.11} and the
first equality in the third line uses cyclicity of the trace.
\end{proof}


\section{Representation theory for \texorpdfstring{$\UqGH$}{UqHsl(2|1)}}

We assume the ground field is the complex numbers $\BC$ unless stated otherwise.
Fix $q\in\BC^\times$ not a root of unity. For $x\in\BC$, let $\br{x}=q^x-q^{-x}$ and
$[x]=\frac{\br{x}}{\br{1}}$.

\subsection{Super vector spaces}

A \emph{super vector space} is a $\BZ_2$-graded vector space $V=V_{\p0}\oplus V_{\p1}$.
The parity of a homogeneous element $v\in V$ is denoted $\p v \in \BZ_2$. Morphisms
of super vector spaces are parity preserving linear maps. The tensor product of
super vector spaces $V$ and $W$ is the tensor product of the underlying vector spaces
with $\BZ_2$-grading given by
\be
\begin{aligned}
(V \otimes W)_{\p p} = \bigoplus_{\p a + \p b = \p p} V_{\p a} \otimes W_{\p b}.
\end{aligned}
\ee
A (left) module over a (associative) superalgebra $A$ is a super vector space $M$
with an $A$-module structure for which the action map $A \otimes M \rightarrow M$ is
a morphism of super vector spaces. Given homogeneous elements $a,b \in A$, set
$[a,b] = ab -(-1)^{\p a \p b} ba$.

\subsection{The unrolled quantum group}

In this section we describe the unrolled quantum group $\UqGH$, which is a version of
the usual quantized Lie superalgebra $\UqG$ that includes the logarithm of the
Cartan generators. This notion was coined in~\cite{CGP:remarks}, but the name
unrolled is not descriptive, and neither is the superscript in the notation $\UqGH$
that indicates that the logarithm of the Cartan generators is taken.

Aside from the terminology and notation, the presence of the additional generators
in $\UqGH$ versus $\UqG$ ensures that the category of weight modules has a
well-defined braiding and twisting; see Section~\ref{sub.braiding} below.

Recall the Cartan data $(a_{ij})=\begin{pmatrix} 2 &-1\\-1&0 \end{pmatrix}$ associated
to the Lie superalgebra $\sl21$. 

\begin{definition}
Let $\UqGH$ be the unital superalgebra with even generators $H_i$, $K_i$, $K_i^{-1}$,
$E_1$, $F_1$ for $i=1,2$ and odd generators $E_2$, $F_2$ and relations 
\be
\label{eq:rels}
\begin{aligned}
  [H_i,H_j]=[H_i,K_j^\pm]=0\,,\quad K_iK_i^{-1}=1\,,\quad
  [E_i,F_j]=\delta_{ij}\frac{K_i-K_i^{-1}}{q-q^{-1}}\,,
\quad E_2^2=F_2^2=0\,,
\\
[H_i,E_j]={a_{ij}} E_j\,,
\quad
[H_i,F_j]=-{a_{ij}} F_j\,,
\quad 
K_iE_j=q^{a_{ij}}E_jK_i\,,\quad K_iF_j=q^{-a_{ij}}F_jK_i\,,
\end{aligned} 
\ee
\begin{align}
E_1^2E_2-[2]E_1E_2E_1+E_2E_1^2=0\,,
\quad
F_1^2F_2-[2]F_1F_2F_1+F_2F_1^2=0\,.
\end{align}
\end{definition}

There is a unique Hopf superalgebra structure on $\UqGH$ with counit $\epsilon$,
coproduct $\Delta$\footnote{We hope that our notation for the coproduct is not confused
  with our notation for the Alexander polynomial.} and antipode $S$ defined
on generators by
\be
\label{eq:HopfStructure}
\begin{aligned}
\Delta(E_i)&=E_i\otimes K_i+1\otimes E_i\,,
&S(E_i)&=-E_iK_i^{-1}\,,
&\epsilon(E_i)&=0\,,
\\
\Delta(F_i)&=F_i\otimes 1+ K_i^{-1}\otimes F_i\,,
&S(F_i)&=-K_iF_i\,,
&\epsilon(E_i)&=0\,,
\\
\Delta(H_i)&=H_i\otimes 1+1\otimes H_i\,,
&S(H_i)&=-H_i\,,
&\epsilon(H_i)&=0\,,
\\
\Delta(K_i)&=K_i\otimes K_i\,,
&S(K_i)&=K_i^{-1}\,,
&\epsilon(K_i)&=1\,.
\end{aligned}
\ee

The Hopf superalgebra $\UqGH$ is the \emph{unrolled quantum group} of
$\sl21$, which below we will abbreviate simply by $\UqH$. The superscript $H$
refers to the fact that it contains generators $H_i$ corresponding to the Cartan
algebra. At any rate, $\UqH$ contains 
the Hopf super-subalgebra $\U_q$ generated by $E_i$, $F_i$, $K_i$ for $i=1,2$.
  
We define the odd elements
\be
\label{E12F12}
E_{12}=E_1E_2 - q E_2E_1, \qquad F_{12}=F_2F_1 - q^{-1} F_1F_2 \,.
\ee
Let $\U_q^+$ be the algebra generated by $E_1$ and $E_2$, which also contains
$E_{12}$, and has the following relations
\be
E_1E_{12}=q^{-1}E_{12}E_1\,, \qquad E_{12}E_2=-q E_2E_{12}\,,
\qquad E_1E_2=qE_2E_1+E_{12}\,,
\ee
and similar relations hold in $\U_q^-$ which is generated by
$F_1$ and $F_2$
\be
F_1F_{12}=q^{-1}F_{12}F_1\,, \qquad F_{12}F_2=-q F_2F_{12}\,,
\qquad F_1F_2=qF_2F_1-qF_{12}\,.
\ee
Further relations in $\Uq$ include
\be
\begin{aligned}
[E_1,F_{12}]&=-q^{-1}F_2K_1^{-1}\,,
&
[E_2,F_{12}]&=F_1K_2\,,
&[E_1,F_1^i]&=[i]F_1^{i-1}\frac{q^{-(i-1)}K_1-q^{i-1}K_1^{-1}}{q-q^{-1}}\,,
\\
[F_1,E_{12}]&=E_2K_1\,,
&
[F_2,E_{12}]&=qE_1K_2^{-1}\,,
&
[E_{12},F_{12}]&=\frac{K_1K_2-K_1^{-1}K_2^{-1}}{q-q^{-1}}\,.
\end{aligned}
\ee

There are two natural bases of $\U_q^+$ and $\U_q^-$. One is the PBW basis generated by
\be
\label{PBW}
E^\psi=E_2^{\psi(\alpha_2)}E_{12}^{\psi(\alpha_{12})}E_1^{\psi(\alpha_1)},
\qquad
F^\psi=F_2^{\psi(\alpha_2)}F_{12}^{\psi(\alpha_{12})}F_1^{\psi(\alpha_1)} \,,
\ee
using the ordered set of positive roots $\Phi^+=(\alpha_2,\alpha_{12}, \alpha_1)$
with $\psi:\Phi^+\to \BZ_{\geq0} \times\{0,1\} \times \{0,1\}$.
To further simplify notation we will write $E^{(a,b,i)}=E^\psi$ and $F^{(a,b,i)}=F^\psi$,
where $\psi(\a_2)=a$, $\psi(\a_{12})=b$ and $\psi(\a_1)=i$. 

The second basis, given here for $\U_q^-$, consists of the vectors $F_{i,j}$ where
\be
\label{nonPBW}
F_{i,0}=F_1^i \,, \qquad
F_{i,1}=F_2F_1^i \,, \qquad
F_{i,2}=F_1F_2F_1^i \,, \qquad
F_{i,3}=F_2F_1F_2F_1^i
\ee
for $i \in \BZ_{\geq0}$, and likewise for $\U_q^+$. 

For  $\b\in\BZ^2$ and $\g\in\BZ_{\geq0}^2$ write $K^\b=K_1^{\b_1}K_2^{\b_2}$ and
$H^\g=H_1^{\g_1}H_2^{\g_2}$. PBW bases for $\UqH$ and $\Uq$ consist of the vectors
\be
F^{\psi'}K^\b H^\g E^{\psi'} \qquad \mbox{and} \qquad F^{\psi'}K^\b E^{\psi'},
\ee
respectively, where $\psi,\psi'$ are as in Equation \eqref{PBW}, $\b\in\BZ^2$,
and $\g\in\BZ_{\geq0}^2$.

\subsection{The category \texorpdfstring{$\catsl$}{sl(2|1)-mod}}
\label{sub.catsl21}

Let $\mathfrak{h}\subset\UqH$ denote the Cartan subalgebra of $\sl21$ generated
by $H_1$ and $H_2$. A $\UqGH$-\emph{weight module} is a finite-dimensional
$\UqGH$-module that satisfies
\be
H_i v = \lambda_i v\,, \qquad K_i v = q^{\lambda_i} v \qquad \mbox{for $i=1,2$}
\ee
for every vector $v \in V$ of weight $\lambda=(\lambda_1,\lambda_2)$.
Let $\catsl$ denote the category whose objects are $\UqGH$-weight modules and morphisms
being $\UqGH$-linear maps. It is a $\BC$-linear abelian category with monoidal
structure being the tensor product of $\UqGH$-weight modules. It also has a braiding
and a ribbon structure induced by the ribbon Hopf super-algebra structure of $\UqGH$, discussed below. 
The duality maps on $\catsl$ are given by
\be
\begin{aligned}
\lev_V(f \otimes v)&= f(v)\,, 
& \quad
\lcoev_V(1) &= \sum_i v_i \otimes v_i^{*}\,, 
\\
\rev_V(v \otimes f) &= 
(-1)^{\p f \p v}f(K_2^{-2} v)\,,
& \quad
\rcoev_V(1)&= \sum_i (-1)^{\p v_i}v_i^{*} \otimes K_2^{2} v_i
\end{aligned}
\ee
for any basis $\{v_i\}$ of $V\in\catsl$ and associated dual basis $\{v_i^*\}$ of
$V^*$. Since $\catsl$ is a category of modules over a ring, this category has
enough projectives. Consequently, we have an ideal $\calP^{\sl21}$ of projectives
in $\catsl$ which is a full ribbon subcategory of $\catsl$.

We consider a family of indecomposable $\UqGH$-weight modules
$V(n,\a)$ which are determined by their highest weight~\cite{Kac:superalgebra}
\be
\label{Lambda}
(n,\a) \in \Lambda = \BZ_{\geq 0} \times \BC
\ee
consisting of one discrete variable $n$ and one continuous variable $\a$ coming
from the actions of $H_1$ and $H_2$, respectively.
The dimension of $V(n,\a)$ is $4(n+1)$ and a PBW basis
for the module $V_p(n,\alpha)$ with highest weight vector $v_0$ is given in terms
of the PBW basis for $\U_q^-$:
\be 
\begin{aligned}
\{v_{abi}=F^{abi}v_0\mid i\in\{0,\ldots, n\}, a,b\in\{0,1\}\}\,. 
\end{aligned}
\ee
Another basis for $V_p(n,\alpha)$ consists of the vectors
\be
\label{eq.nonPBW}
\{
v_{i,j}=F_{i,j}v_0\mid i\in\{0,\dots, n\}, j\in\{0,1,2,3\}
\}
\ee
which will be more convenient for certain computations.
The action of simple root vector generators on these basis vectors is given by 
\be
\begin{aligned}
F_1v_{i,k}&=
\begin{cases}
v_{i+1,0} &k=0\\
v_{i,2} &k=1\\
[2]v_{i+1,2}-v_{i+2,1} &k=2\\
v_{i+1,3} &k=3
\end{cases} 
\,,
&
E_1v_{i,k}&=
\begin{cases}
[i][n-i+1]v_{i-1,0} &k=0\\
[i][n-i+1]v_{i-1,1} &k=1\\
[n-2i+1]v_{i,1}+[i][n-i+1]v_{i-1,2} &k=2\\
[i][n-i+1]v_{i-1,3} &k=3
\end{cases} 
\,,
\\
F_2v_{i,k}&=
\begin{cases}
v_{i,1} &k=0\\
v_{i,3} &k=2\\
0 &k=1,3
\end{cases} 
\,,
&
E_2v_{i,k}&=
\begin{cases}
0 &k=0\\
[\a+i]v_{i,0} &k=1\\
[\a+i]v_{i+1,0} &k=2\\
[\a+i+1]v_{i,2}-[\a+i]v_{i+1,1} &k=3
\end{cases} 
\end{aligned}
\ee
where $v_{i,k}=0$ if $i<0$ or $i>n$. The Cartan generators act diagonally according to the formulas
\be
\begin{aligned}
    H_1v_{i,k}
&=
\begin{cases}
    (n-2i+k)v_{i,k} & k=0,1\\
    (n-2(i+1)+k-1)v_{i,k} & k=2,3
\end{cases}\,,
&&
H_2v_{i,k}
&=
\begin{cases}
    (\a+i)v_{i,k} & k=0,1\\
    (\a+i+1)v_{i,k} & k=2,3
\end{cases}
\end{aligned}\,.
\ee

Another point in the discussion of these modules, is that their highest weight
vector $v_0$ can be even (in which case we denote them by $V_+(n,\a)$), or odd
(in which case we denote them by $V_-(n,\a)$). The parity data was not considered
explicitly
in \cite{Geer:multi}, but it is important when considering the braiding on summands of
tensor powers of $V_{\pm}(0,\alpha)$. In particular, we use the braiding to compute the
modified dimensions appearing in Equation \eqref{eq:ModifiedDimension} and
Theorem \ref{thm.n0}. Below, we will denote $V_+(n,\alpha)$ simply by $V(n,\a)$.

\subsection{Simple versus projective modules}

Here we discuss two flavors of the modules $V(n,\a)$, typical and atypical. The
dichotomy between typical and atypical modules is reflected in their irreducibility
and projectivity properties. Since the grading of the highest weight vector in
$V(n,\a)$ is only relevant when discussing braiding, all results of this section
apply to both $V_\pm(n,\a)$.

Following \cite{Kac:typical}, we say that $V(n,\a)$ or the
weight $(n,\a)$ is \emph{typical} if $\a(n+1+\a) \neq 0$ and is
\emph{atypical} otherwise.

\begin{remark}
\label{rem.Vdual}
Note the isomorphism 
\be
\label{VVdual}
V_{\pm}(n,\a)^*\cong V_{\pm}(n,-\a-n-1)\ee
which follows from inspecting the weights
of $V_{\pm}(n,\a)$ and preserves typicality.
\end{remark}

\begin{remark}
\label{rem.P}
An interesting example of a projective weight
module, which is not typical but appears in the
tensor product decomposition of two typical weight modules, is the 8-dimensional
module $P$ from Figure \ref{fig.proj} below.
\end{remark}

\begin{lemma}
The module $V(n,\a)$ is irreducible if and only if it is typical.
\end{lemma}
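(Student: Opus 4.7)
\emph{Plan.} I split the equivalence into two implications.

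\emph{Atypical $\Rightarrow$ reducible.} The two atypical cases, $\a = 0$ and $\a = -(n+1)$, are interchanged by the duality $V(n,\a)^* \cong V(n,-\a-n-1)$ of Remark~\ref{rem.Vdual}, so it suffices to treat $\a = 0$. I would exhibit the singular vector $v_{0,1} = F_2 v_0$ of weight $(n+1, 0)$: one has $E_1 v_{0,1} = 0$ since $E_1$ commutes with $F_2$ and $E_1 v_0 = 0$, and $E_2 v_{0,1} = [\a]\, v_0 = 0$ when $\a = 0$. A weight-count using the shifts $(-2,1), (1,0), (-1,1)$ of $F_1, F_2, F_{12}$ shows that no non-negative integer combination realizes the shift $(-1,0)$ from $(n+1,0)$ to $(n,0)$; hence $v_0 \notin \Uq \cdot v_{0,1}$, so $V(n,0)$ has a proper submodule. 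The case $\a = -(n+1)$ follows by duality.

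\emph{Typical $\Rightarrow$ irreducible.} Assume $\a(n+1+\a) \neq 0$. I would introduce the $\Uq$-contragredient bilinear pairing
\[
V(n,\a) \otimes V(n,-\a-n-1) \longrightarrow \BC
\]
coming from Remark~\ref{rem.Vdual}, and view it as a Shapovalov-type form on $V(n,\a)$. Its radical is a $\Uq$-submodule of $V(n,\a)$ not containing $v_0$; since $v_0$ generates $V(n,\a)$ by PBW, every proper submodule lies in this radical, so vanishing of the radical forces $V(n,\a)$ to be irreducible. Non-degeneracy is checked weight-space by weight-space. On the one-dimensional weight spaces the Gram factor is a product of $q$-numbers $[i], [n-i+1], [n+1-2i]$ independent of $\a$, nonzero for $0 \leq i \leq n$ and generic $q$. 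On the two-dimensional weight spaces spanned by $v_{i,1}$ and $v_{i-1,2}$ (for $1 \leq i \leq n$), the $2 \times 2$ Gram determinant — computed from the action formulas of Section~\ref{sub.catsl21} using $q$-identities such as $[x+1][y]-[x][y+1]=[x-y]$ — factors, up to units, as a product involving only $[\a]$ and $[\a + n + 1]$ with positive exponents, both nonzero under typicality.

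The main obstacle is the explicit Gram-determinant computation on the two-dimensional weight spaces, where $E_i$ and $F_i$ mix $v_{i,1}$ and $v_{i-1,2}$ non-trivially and all $\a$-dependence of the form is concentrated. Identifying exactly the atypicality factors $[\a]$ and $[\a+n+1]$ in the resulting product is the heart of the irreducibility criterion, and a self-contained alternative is to appeal to Kac's classical theorem for $\sl(2|1)$ and verify by a deformation argument that no new singular vectors appear at generic $q$.
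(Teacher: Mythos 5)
Your proposal does not establish the lemma: the substantive half, ``typical implies irreducible,'' is only a plan. The non-degeneracy of the Shapovalov-type form is exactly the content of the lemma, and the decisive step --- the factorization of the Gram determinants into the atypicality factors $[\a]$ and $[\a+n+1]$ --- is asserted rather than computed; you yourself label it ``the main obstacle,'' and the fallback (Kac's classical theorem plus an unspecified deformation argument) is not an argument either. Moreover the sketch misdescribes where the $\a$-dependence sits: already the one-dimensional weight space spanned by $v_{0,1}=F_2v_0$ has norm $[\a]$, and the spaces spanned by $v_{i,3}$ carry $\a$-dependent factors as well, so it is not true that all $\a$-dependence is concentrated in the two-dimensional blocks $\{v_{i,1},v_{i-1,2}\}$. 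By contrast, the paper disposes of both directions with one computation: $V(n,\a)$ is irreducible if and only if $v_0$ lies in the submodule generated by the lowest weight vector $F^{11n}v_0$, and $E^{11n}F^{11n}v_0=[\a][n+\a+1]([n]!)^2\,v_0$, which vanishes exactly in the atypical case.

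The ``atypical implies reducible'' half also has gaps. For $\a=0$ the singular vector $F_2v_0$ is correct, but the weight-count as you state it is insufficient: the shift $(-1,0)$ from $(n+1,0)$ to $(n,0)$ \emph{is} realized inside $\Uq$, namely by $E_2$; you must first invoke the singularity you computed ($E_1F_2v_0=E_2F_2v_0=0$) together with the PBW factorization to reduce $\Uq\cdot F_2v_0$ to the span of $\U_q^-$-monomials applied to $F_2v_0$, and only then does the count over the shifts of $F_1,F_2,F_{12}$ apply. More seriously, the case $\a=-(n+1)$ ``by duality'' leans on Remark~\ref{rem.Vdual} precisely where its justification breaks down: the isomorphism there is obtained ``by inspecting the weights,'' which pins down the module only when it is simple, i.e.\ in the typical case; for the atypical weight you need, equality of characters does not yield an isomorphism of the reducible, indecomposable modules, so the transfer of reducibility is unproved as written. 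The idea can be repaired --- if $V(n,-n-1)$ were irreducible, its dual would be an irreducible highest-weight module of highest weight $(n,0)$ and dimension $4(n+1)$, hence a quotient of the Kac module $V(n,0)$ of full dimension, forcing $V(n,0)$ to be irreducible, contradicting your first case --- but that extra argument (or a direct singular vector for $\a+n+1=0$) is needed and is absent from the proposal.
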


\begin{proof}
The representation is irreducible if and only if the highest weight vector space
$\la v_0\ra$ is contained in the submodule generated by the lowest weight vector
$F^{11n}v_0$. By the PBW theorem, an algebra element which lifts $F^{11n}v_0$ to
$v_0$ is a multiple of $E^{11n}$. A computation now shows
\be
E^{11n}F^{11n}v_0=[\a][n+\a+1]([n]!)^2 v_0
\ee
which is nonzero if and only if $\a(n+1+\a) \neq 0$.
\end{proof}

\begin{lemma}
\label{lem.proj}  
The representation $V(n,\a)$ is projective if and only if it is typical.
\end{lemma}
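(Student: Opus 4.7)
The plan is to prove the two implications separately, leveraging the explicit description of $V(n,\a)$ together with the categorical framework developed in Section~\ref{sec.proofn0}.

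For the direction typical $\Rightarrow$ projective, the cleanest route is to invoke the result of Geer already cited in Section~\ref{sub.modified}: in the module category of a (quantum) Lie superalgebra of the type under consideration, the ideal generated by any typical module coincides with the ideal $\calP$ of projectives. Thus any typical $V(n,\a)$ belongs to $\calP$ and is in particular projective. An alternative direct argument proceeds through a central character / block decomposition: one exhibits a Casimir-type central element of $\UqH$ whose eigenvalue on a typical $V(n,\a)$ is strictly distinct from its eigenvalue on any other simple weight module sharing a weight with $V(n,\a)$, so that typical simples lie in singleton blocks. Since composition factors of indecomposable modules share a central character, any indecomposable containing $V(n,\a)$ as a composition factor is already $V(n,\a)$, so its projective cover is itself.

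For the direction atypical $\Rightarrow$ not projective, I would argue as follows. By the previous lemma, atypical $V(n,\a)$ is reducible. The explicit action formulas for $F_1$, $F_2$ on the basis $\{v_{i,k}\}$ show that the highest weight space $\BC v_0$ is one-dimensional and that every basis vector is obtained from $v_0$ by applying a word in $F_1,F_2$. Consequently, in any decomposition $V(n,\a)=A\oplus B$ the highest weight vector $v_0$ lies entirely in one summand, say $A$, and then $A$ already contains a generating set of $V(n,\a)$, forcing $B=0$. Hence $V(n,\a)$ is indecomposable. If $V(n,\a)$ were also projective, then by the standard fact that an indecomposable projective in a category with enough projectives is the projective cover of its simple head, $V(n,\a)$ would equal $P(L)$, the projective cover of its unique simple head $L$. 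But in atypical blocks of $\catsl$ the projective cover of an atypical simple strictly dominates the corresponding $4(n+1)$-dimensional Kac-type module $V(n,\a)$; this phenomenon is already visible in the $8$-dimensional projective $P$ of Remark~\ref{rem.P} and Figure~\ref{fig.proj}, which covers an atypical simple of dimension $4$. Comparing dimensions then yields a contradiction.

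The main obstacle is Direction 2, where one must actually know (not merely assume) that the projective cover of an atypical simple in $\catsl$ has dimension strictly greater than $4(n+1)$. The plan is to construct such a covering module explicitly, extending the $8$-dimensional example $P$ of Remark~\ref{rem.P}: concretely, one glues several atypical Kac-type modules along their singular vectors using a non-split extension built from the PBW basis of $\UqH$ and the fact that $[E_2,F_2]$ and $[E_{12},F_{12}]$ act as $[\a]$-type quantum integers that vanish precisely at the atypical locus. Once such an indecomposable projective $P(L)$ is produced, the desired contradiction $\dim P(L)>\dim V(n,\a)$ is immediate.
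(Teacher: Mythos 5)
Your ``typical $\Rightarrow$ projective'' direction is essentially fine as stated: quoting the Geer--Patureau-Mirand result that the ideal generated by a typical module coincides with $\calP$ does give projectivity of typical $V(n,\a)$, modulo checking that the cited statement really applies to the unrolled category $\catsl$ at generic $q$ rather than only to the classical or non-unrolled setting. Note that the paper does not take this route: it proves the lifting property by hand, writing a candidate highest weight vector $w_0'=\sum c_{\psi,\psi'}F^\psi E^{\psi'}w_0$ over a preimage $w_0$ of $g(v_0)$ and solving the resulting finite linear system explicitly; solvability is exactly the condition $\a(n+\a+1)\neq 0$, which is what makes the statement self-contained and ties typicality to projectivity in one computation. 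Your alternative central-character/block argument is only sketched (no Casimir element is exhibited, no separation of eigenvalues is verified), so as written it cannot substitute for the citation.

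The genuine gap is in the ``atypical $\Rightarrow$ not projective'' direction. Your argument reduces correctly to: atypical $V(n,\a)$ is indecomposable with simple head $L$, so if it were projective it would be the projective cover $P(L)$; but then you need to know that $P(L)$ is strictly larger than $V(n,\a)$ (or otherwise that the quotient map $V(n,\a)\to L$ witnesses a failure of lifting), and this is precisely what you do not prove. The appeal to Figure~\ref{fig.proj} does not carry the weight you put on it: the $8$-dimensional module $P$ there is the projective cover of the \emph{one}-dimensional unit $\unit$ (its simple head is $\unit$, not a $4$-dimensional simple --- the $4$-dimensional $V(0,0)$ it surjects onto is atypical, hence not simple), so it is at best an analogy, and it says nothing about atypical weights $(n,\a)$ with $n>0$ or $\a=-n-1$. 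Your proposed fix --- gluing Kac-type modules along singular vectors to build $P(L)$ explicitly --- is exactly the hard content that is missing; until such a module is constructed (or a concrete epimorphism onto $V(n,\a)$ or onto $L$ is shown not to admit a lift), the ``only if'' half of the lemma is not established. The paper's computation at least localizes the obstruction concretely: the linear system for the lift degenerates exactly when $\a(n+\a+1)=0$, and one can extract from the failed ansatz an explicit non-liftable situation, whereas your outline defers this to an unconstructed object.
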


\begin{proof}
Let $f: W \twoheadrightarrow Z$ be an epimorphism in $\cat$. A nonzero morphism
$g: V(n,\a)\rightarrow Z$ is determined by a highest weight vector $g(v_0)=z_0 \in Z$
of weight $(n,\a)$. We aim to prove that there is a lift $g':V(n,\a)\to W$
such that $g=f\circ g'$.

Let $w_0 \in W$ be a preimage of $z_0$
through $f$ which is necessarily nonzero. Moreover, for $i=1,2$
\be
f(E_iw_0)=E_if(w_0)=E_iz_0=E_ig(v_0)=g(E_iv_0)=0\,.
\ee
Therefore, a weight vector $w_0'$ in the submodule generated by $w_0$ satisfying
$f(w_0')=z_0$ must be of the following form, in the notation of Equation \eqref{PBW},
\be
\begin{aligned}
w_0'=\sum_{\psi,\psi'\in \BZ_{\geq0}^{\Phi^+}} c_{\psi,\psi'} F^\psi E^{\psi'} w_0
\end{aligned}
\ee
with $c_{(000)(000)}=1$ and only finitely many $c_{\psi,\psi'}\in\BC$ nonzero.
By weight considerations we have the following possible combinations of $\psi$
and $\psi'$:
\be
\begin{aligned}
\psi&=\psi'=(a,b,i), \\
\psi &= (1,0,i+1), \psi'=(0,1,i),\\
\psi &= (0,1,i),\psi'=(1,0,i+1)\,.
\end{aligned}
\ee
Let $k$ be the largest $i$ such that $E_1^iw_0$ is nonzero. 

We wish to determine $c_{\psi,\psi'}$ so that $w_0'$ is a highest weight vector in $W$,
i.e. $E_1w_0'=E_2w_0'=0$. Following weight considerations, it is necessary that
$F_1^{n+1}w_0=0$. To impose the highest weight constraint, we compute these actions
of $E_1$ and $E_2$ and find a relation among the coefficients $c_{\psi,\psi'}$. We
consider six cases each. For $E_1$:
\be
\begin{aligned}
  E_1\cdot F^{00i}E^{00i}w_0&
  = F^{00i}E^{00i+1}w_0+[i][n+i+1]F^{00i-1}E^{00i}w_0
\\
E_1\cdot F^{01i}E^{01i}w_0&
=q^{-1}F^{01i}E^{01i+1}w_0+[i][n+i+2]F^{01i-1}E^{01i}w_0-q^{-n-2}F^{10i}E^{01i}w_0
\\
E_1\cdot F^{10i}E^{10i}w_0&
=qF^{10i}E^{10i+1}w_0+F^{10i}E^{01i}w_0+[i][n+i]F^{10i-1}E^{10i}w_0
\\
E_1\cdot F^{11i}E^{11i}w_0&=F^{11i}E^{11i+1}w_0+
[i][n+i+1]F^{11i-1}E^{11i}w_0
\\
E_1\cdot F^{10i+1}E^{01i}w_0&=
q^{-1}F^{10i+1}E^{01i+1}w_0+[i+1][n+i+1]F^{10i}E^{01i}w_0
\\
E_1\cdot F^{01i}E^{10i+1}w_0&=qF^{01i}E^{10i+2}w_0+
F^{01i}E^{01i+1}w_0
\\&+[i][n+i+2]F^{01i-1}E^{10i+1}w_0
-q^{-n-2}F^{10i}E^{10i+1}w_0
\end{aligned}
\ee
and for $E_2$:
\be
\begin{aligned}
E_2\cdot F^{00i}E^{00i}w_0&=F^{00i}E^{10i}w_0
\\
E_2\cdot F^{01i}E^{01i}w_0&=-F^{01i}E^{11i}w_0+q^{\a-1}F^{00i+1}E^{01i}w_0
\\
E_2\cdot F^{10i}E^{10i}w_0&=[\a]F^{00i}E^{10i}w_0
\\
E_2\cdot F^{11i}E^{11i}w_0&=-q^{\a-1}F^{10i+1}E^{11i}w_0+[\a]F^{01i}E^{11i}w_0
\\
E_2\cdot F^{10i+1}E^{01i}w_0&=-F^{10i+1}E^{11i}w_0+[\a]F^{00i+1}E^{01i}w_0
\\
E_2\cdot F^{01i}E^{10i+1}w_0&=q^{\a-1}F^{00i+1}E^{10i+1}w_0\,.
\end{aligned}
\ee
These determine a system of equations in the coefficients from $E_1$:
\be
\begin{aligned}
  c_{(00i)(00i)}+[i+1][n+i+2]c_{(00i+1)(00i+1)}&=0
\\
q^{-1}c_{(01i)(01i)}+{[i+1][n+i+3]}c_{(01i+1)(01i+1)}+c_{(01i)(10i+1)}&=0
\\
-q^{-n-2}c_{(01i)(01i)}+c_{(10i)(10i)}+q^{-1}c_{(10i)(01i-1)}
+{[i+1][n+i+1]}c_{(10i+1)(01i)}&=0
\\
qc_{(10i)(10i)} +{[i+1][n+i+1]}c_{(10i+1)(10i+1)}-q^{-n-2}c_{(01i)(10i+1)}&=0
\\
c_{(11i)(11i)}+{[i+1][n+i+2]}c_{(11i+1)(11i+1)}&=0 
\\
qc_{(01i)(10i+1)}+{[i+1][n+i+3]}c_{(01i+1)(10i+2)}&=0
\end{aligned}
\ee
and from $E_2$:
\be
\begin{aligned}
c_{(00i)(00i)}+{[\a]}c_{(10i)(10i)}+q^{\a-1}c_{(01i-1)(10i)}&=0
\\
-c_{(01i)(01i)}+{[\a]}c_{(11i)(11i)}&=0
\\
q^{\a-1}c_{(01i)(01i)}+{[\a]}c_{(10i+1)(01i)}&=0
\\
-q^{\a-1}c_{(11i)(11i)}-c_{(10i+1)(01i)}&=0 
\end{aligned}
\ee
Since $i\leq k$, we solve the finite linear system which has unique solution in
the parameters $c_{\psi,\psi'}$ if and only if $\a(n+\a+1)\neq 0$: 
\be
\begin{aligned}
c_{(100)(100)}&=-\frac{1}{[\a]}\,,&
c_{(110)(110)}&=-\frac{q}{[n+\a+1][\a]}\,,\\
c_{(00i)(00i)}&=\frac{(-1)^i[n+1]!}{[i]![n+i+1]!}\,,&
c_{(11i)(11i)}&=\frac{(-1)^{i+1}q[n+1]!}{[i]![n+i+1]![n+\a+1][\a]}\,,\\
c_{(01i)(10i+1)}&=\frac{(-1)^{i+1}q^{n+i+3}[n+1]!}{[i]![n+i+2]![n+\a+1]}\,,&
c_{(01i)(01i)}&=\frac{(-1)^{i+1}q[n+1]!}{[i]![n+i+1]![n+\a+1]}\,,\\
c_{(010)(101)}&=\frac{-q^{n+3}}{[n+2][n+\a+1]}\,,&
c_{(10i+1)(01i)}&=\frac{(-1)^{i}q^{\a}[n+1]!}{[i]![n+i+1]![n+\a+1][\a]}\,,
\end{aligned}
\ee
\begin{align}
c_{(10i+1)(10i+1)}&=(-1)^iq^{i+1}\frac{[n+\a+i+2][n+1]!}{[i+1]![n+i+2]![n+\a+1][\a]}\,.
\end{align}
In which case,
the map $V(n,\a)\to W$ determined by $v_0\mapsto w_0'$ is a
lift of $g$.
\end{proof}

\begin{remark}
\label{rem.avoid}   
Although there exists a unique (up to scalar) modified trace on $\calP^{\sl21}$, we
are mostly interested in the typical modules, their modified dimensions, and their
tensor product decompositions. For instance, our main Theorem~\ref{thm.n0}
and Conjecture~\ref{conj.1} involve typical modules. 
\end{remark}

\subsection{Fusion rules}
\label{sub.fusion}

In this section we recall the tensor product decomposition of two typical modules
using a formula of Geer-Patureau-Mirand~\cite[Lem.1.3]{Geer:multi}
\be
\label{V0n}
\begin{aligned}
V(0,\a) \otimes V(n,\beta) \cong~ &  V(n,\a+\beta) \oplus V(n,\a+\beta+1) \oplus
V(n+1,\a+\beta) \\ & \oplus (1-\delta_{n,0}) V(n-1,\a+\beta+1) 
\end{aligned}
\ee
with the understanding that the last term is omitted when $n=0$. Here, we assume that
$\a,\beta, 1$ are $\BZ$-linearly independent, hence the corresponding representations
are typical. We complement the above decomposition with highest weight vectors.

\begin{lemma}
\label{lem.high}  
A choice of highest weight vectors of the summands in the decomposition
~\eqref{V0n} $V(0,\a) \otimes V(n,\beta)$ of are:
\be
\label{highv}
\begin{aligned}
V(n,\a+\beta) & : \,\, \Delta(E^{(110)}F^{(110)})(v_0\otimes v_0), \\
V(n,\a+\beta+1) & : \,\, \Delta(E^{(110)}F^{(110)})(v_0\otimes F_2F_1F_2v_0), \\
V(n+1,\a+\beta) & : \,\, \Delta(E^{(110)}F^{(110)})(v_0\otimes F_2v_0), \\
V(n-1,\a+\beta) & : \,\,
\Delta(E^{(110)}F^{(110)})(v_0\otimes ([n]F_1F_2-[n+1]F_2F_1)v_0)\,.
\end{aligned}
\ee
\end{lemma}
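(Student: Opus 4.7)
The proof amounts to verifying, for each of the four listed vectors, that it is (i) a weight vector of the claimed highest weight, (ii) annihilated by $\Delta(E_1)$ and $\Delta(E_2)$, and (iii) nonzero. Since the four claimed highest weights $(n,\alpha+\beta)$, $(n,\alpha+\beta+1)$, $(n+1,\alpha+\beta)$, $(n-1,\alpha+\beta+1)$ (where the fourth entry should read $\alpha+\beta+1$ to match \eqref{V0n}) are pairwise distinct under typicality, the four resulting highest weight vectors lie in four distinct simple summands of $V(0,\alpha)\otimes V(n,\beta)$, which by weight-matching are the four summands of \eqref{V0n}.

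For (i), the element $E^{(110)}F^{(110)}\in\U_q$ is weight-zero (it commutes with the Cartan generators $H_1, H_2$), so $\Delta(E^{(110)}F^{(110)})$ preserves weight spaces of $V(0,\alpha)\otimes V(n,\beta)$. Each candidate therefore has the same weight as its seed $v_0\otimes w$, and a direct count on the seeds $v_0\otimes v_0$, $v_0\otimes F_2v_0$, $v_0\otimes F_2F_1F_2v_0$, $v_0\otimes([n]F_1F_2-[n+1]F_2F_1)v_0$ matches them to the four target weights.

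The core of the argument is (ii), which I would verify by direct computation. The plan is to expand $\Delta(E^{(110)}F^{(110)})$ using the coproduct formulas \eqref{eq:HopfStructure} together with the definitions \eqref{E12F12} of $E_{12}, F_{12}$, apply the resulting operator to each seed using the explicit action formulas of Section~3.3 for $V(0,\alpha)$ and $V(n,\beta)$, and then check that $\Delta(E_1)$ and $\Delta(E_2)$ annihilate the result. Morally, the choice of $E^{(110)}F^{(110)}$ realizes a ``highest weight projector'' on the tensor product: it preserves weight spaces and, when applied via $\Delta$ to a weight vector whose weight matches the highest weight of a unique summand $V(m,\gamma)$ of \eqref{V0n}, it yields a (generically nonzero) scalar multiple of the highest weight vector of $V(m,\gamma)$. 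The verification reduces to polynomial identities in $q^\alpha, q^\beta, q$ coming from the Serre relations and the commutators $[E_i, F_{12}]$, $[F_i, E_{12}]$, $[E_{12}, F_{12}]$ listed after \eqref{E12F12}.

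The main obstacle is the bookkeeping for the fourth seed, where the combination $[n]F_1F_2-[n+1]F_2F_1$ must be singled out from the two-dimensional weight-$(n,\beta+1)$ subspace of $V(n,\beta)$: it is the unique (up to scalar) direction whose tensor with $v_0$ has nonzero projection onto the $V(n-1,\alpha+\beta+1)$ summand rather than onto $V(n,\alpha+\beta+1)$, with coefficients $[n]$ and $-[n+1]$ dictated by the structure constants of $E_1, E_2$ acting on $F_1F_2v_0$ and $F_2F_1v_0$. Nonvanishing (claim (iii)) in each case reduces to checking that an explicit Laurent polynomial in $q^\alpha, q^\beta, q$ is nonzero, which holds under the typicality hypothesis.
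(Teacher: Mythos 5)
Your proposal follows essentially the same route as the paper, whose proof is a one-liner: the vectors are observed to be highest weight (which you propose to check directly via $\Delta(E_1)$, $\Delta(E_2)$) and a straightforward computation shows nonvanishing. The only caveat is that the paper's nonvanishing hypothesis is the $\BZ$-linear independence of $\a$, $\b$, $1$ (the standing assumption of Section~\ref{sub.fusion}), slightly stronger than the typicality you cite, and you correctly note the fourth summand's label should read $V(n-1,\a+\b+1)$ as in~\eqref{V0n}.
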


\begin{proof}
These vectors are clearly highest weight and a straightforward computation shows
that they are nonzero when $\a$, $\b$, and $1$ are linearly independent over $\BZ$.
\end{proof}

\begin{remark}
\label{rem.parity}  
The parity of the above highest weight vectors implies the refinement
\be
\label{Vp0n}
\begin{aligned}
  V_p(0,\a) \otimes V_{p'}(n,\beta) \cong &~
  V_{p+p'}(n,\a+\beta) \oplus V_{p+p'}(n,\a+\beta+1) \oplus
V_{p+p'+1}(n+1,\a+\beta) \\ & \oplus (1-\delta_{n,0}) V_{p+p'+1}(n-1,\a+\beta+1)\,. 
\end{aligned}
\ee
In other words, the parity of a summand with highest weight $(n+k,\alpha+\beta+\ell)$
is $\p{p+p'+k}$. Thus, in the formulas below, we can ignore the parity of the
representations. 
\end{remark}

This allows us to inductively decompose $V(0,\a)^{\otimes n}$ into
irreducible representations. A nice formula for such a decomposition was found by
Anghel~\cite{Anghel} who proved that for all integers $n \geq 1$ we have
\be
\label{texp}
V(0,\a)^{\otimes n} \cong \bigoplus_{k + \ell \leq n-1} m^{(n)}_{k,\ell} V(k,n\a+\ell)
\ee
where the summation is over all pairs $(k,\ell)$ of nonnegative integers
$k,\ell \geq 0$ satisfying $k+\ell \leq n-1$. If we replace $V(0,\alpha)$ with
$V_p(0,\alpha)$, then the summand $V(k,n\alpha+\ell)$ is replaced by
$V_{np+k}(k,n\alpha+\ell)$. From this, we can infer the parity in the lines below.
A bijective correspondence of paths
whose number is $m^{(n)}_{k,\ell}$ is given in the above mentioned work.
The numbers $m^{(n)}_{k,\ell}$ satisfy the recursion~\eqref{mrec}
with $m^{(n)}_{n-1,0}=1$ and the understanding that $m^{(n)}_{k,\ell}=0$ if
$k<0$ or $\ell<0$. They can be arranged in a triangular pattern similar to
the binomial coefficients, from which follows that they satisfy 
the symmetry
\be
\label{msym}
m^{(n)}_{k,\ell}=m^{(n)}_{k,n-1-k-\ell} \,.
\ee
The values for $n=1,\dots,6$ are given by

\be
\left[\begin{matrix}
  1
\end{matrix}\right]
\quad
\left[\begin{matrix}
  1 \,\, 1 \\ 1
\end{matrix}\right]
\quad
\left[\begin{matrix}
  1 \,\, 3 \,\, 1 \\ 2 \,\, 2 \\ 1
\end{matrix}\right]
\quad
\left[\begin{matrix}
1 \,\, 6 \,\, 6 \,\, 1 \\
3 \,\, 8 \,\, 3 \\ 3 \,\, 3 \\ 1
\end{matrix}\right]
\quad
\left[\begin{matrix}
1 \,\, 10 \,\, 20 \,\, 10 \,\, 1 \\
4 \,\, 20 \,\, 20 \,\, 4 \\
6 \,\, 15 \,\, 6 \\ 4 \,\, 4 \\ 1
\end{matrix}\right]
\quad
\left[\begin{matrix}
1 \,\, 15 \,\, 50 \,\, 50 \,\, 15 \,\, 1 \\
5 \,\, 40 \,\, 75 \,\, 40 \,\, 5 \\
10 \,\, 24 \,\, 10 \\ 5 \,\, 5 \\ 1
\end{matrix}\right] \,.
\ee
For example, we have
\be
\label{V2a}
\begin{aligned}
V(0,\a)^{\otimes 2} &\cong V(0,2\a) \oplus V(0,2\a+1) \\
& \hspace{1.5cm} \oplus V(1,2\a)
\end{aligned}
\ee
and
\be
\label{V3a}
\begin{aligned}
V(0,\a)^{\otimes 3} &\cong V(0,3\a) \oplus 3 V(0,3\a+1) \oplus V(0,3\a+2) \\
& \hspace{1.5cm} \oplus 2 V(1,3\a)  \oplus 2 V(1,3\a+1) \\
& \hspace{2.5cm} \oplus V(2,3\a) 
\end{aligned}
\ee

Equation~\eqref{texp} implies that every representation is a virtual sum
of tensor powers of $V_p(0,\a)$. A lesser-known property is the following.

\begin{lemma}
\label{lem.tp}  
Equation~\eqref{texp} determines the tensor product
decomposition of $V(n,\a) \otimes V(m,\b)$ uniquely. Explicitly, 
for $n, m \geq 0$ and $\a,\b,1$ $\BZ$-linearly independent, we have
\be
\label{Vnm}
\begin{aligned}
V(n,\a) \otimes V(m,\beta) \cong&~ 
V(n+m+1,\gamma) \\
  &\oplus \bigoplus_{k=|n-m|}^{n+m} ( V(k, \gamma+\mu+
\lfloor \tfrac{2+|n-m|-k}{2} \rfloor) \oplus
V(k, \gamma+\mu + \lfloor \tfrac{1+|n-m|-k}{2} \rfloor) \\  &\oplus 
(1-\delta_{n,m}) \,\, V(|n-m|-1, \gamma+1+\mu) 
\end{aligned}
\ee
where $\mu=\min\{n,m\}$ and $\gamma=\a+\beta$.
\end{lemma}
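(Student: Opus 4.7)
The plan is to first establish uniqueness and then verify the explicit formula \eqref{Vnm} by induction on $\mu = \min\{n,m\}$.

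For uniqueness, I would work in the Grothendieck ring of the full subcategory of typical weight modules, where for generic parameters the classes $[V(k,\gamma)]$ are linearly independent since the $V(k,\gamma)$ are pairwise non-isomorphic simples with distinct highest weights. Using Equation \eqref{V0n} (of which \eqref{texp} is the iterated form), one has $[V(0,\alpha)][V(n-1,\beta)] = [V(n,\alpha+\beta)] + (\text{terms with first index} \leq n-1)$. Solving recursively for $[V(n,\alpha)]$, by induction on $n$ every $[V(n,\alpha)]$ is a $\BZ$-linear combination of products $[V(0,\beta_1)]\cdots [V(0,\beta_k)]$. Hence any tensor product $[V(n,\alpha) \otimes V(m,\beta)]$ expands as a polynomial in $[V(0,\cdot)]$'s, which \eqref{texp} then re-expands into a unique linear combination of typical simples; this gives the uniqueness asserted by the first sentence of the lemma.

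For the explicit formula, I would induct on $\mu$, using symmetry of the tensor product to reduce to $n \leq m$, so that $\mu = n$. The base case $n=0$ is exactly \eqref{V0n}. For $n \geq 1$, choose $\alpha_1,\alpha_2 \in \BC$ with $\alpha_1 + \alpha_2 = \alpha$ generic and rewrite \eqref{V0n} applied to $V(0,\alpha_1) \otimes V(n-1,\alpha_2)$ as
\begin{equation*}
[V(n,\alpha)] = [V(0,\alpha_1)][V(n-1,\alpha_2)] - [V(n-1,\alpha)] - [V(n-1,\alpha+1)] - (1-\delta_{n,1})[V(n-2,\alpha+1)]
\end{equation*}
in the Grothendieck ring. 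Multiplying by $[V(m,\beta)]$, the last three terms each have a factor $V(j,\cdot)$ with $j \leq n-1$, so they are computed directly by the inductive hypothesis. For the main term $[V(0,\alpha_1)][V(n-1,\alpha_2)][V(m,\beta)]$, I would first expand $[V(n-1,\alpha_2) \otimes V(m,\beta)]$ by induction, then tensor each resulting typical simple $V(k,\gamma')$ with $V(0,\alpha_1)$ via \eqref{V0n}. Collecting and cancelling then yields \eqref{Vnm}.

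The main obstacle is the combinatorial bookkeeping in the inductive step: after the double expansion and subtraction of correction terms, one must verify extensive cancellation to leave only the summands of \eqref{Vnm} with the floor shifts $\lfloor(2+|n-m|-k)/2\rfloor$ and $\lfloor(1+|n-m|-k)/2\rfloor$. These floors encode the parity of accumulated shifts in the continuous parameter along the chain of applications of \eqref{V0n}. Similarly, the boundary summand $V(n+m+1,\gamma)$ arises only from the topmost step, while $V(|n-m|-1,\gamma+1+\mu)$ arises only from the bottommost, with the prefactor $1-\delta_{n,m}$ recording the collapse when $n=m$. A clean way to organize the cancellations is to track dimensions at each stage as a consistency check, since the sum of $4(k+1)$ over the RHS of \eqref{Vnm} must equal $16(n+1)(m+1)$.
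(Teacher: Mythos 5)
Your strategy is sound and would prove the lemma, but it is a genuinely different route from the paper's. The paper argues through characters: by \eqref{char.Vna} the character of a typical module factors as $t^{\a}(xy)^r s_n(x,y)$ with $s_n$ an $\mathfrak{sl}_2$-type character, so the product of two typical characters reduces at once to a single Clebsch--Gordan-style identity for $s_n s_m$ (left to an induction on $n,m$), and the decomposition follows because characters determine typical modules. You instead work in the Grothendieck ring, solve \eqref{V0n} recursively to write $[V(n,\a)]$ in terms of $[V(0,\cdot)]$'s, and re-expand by induction on $\mu$; this buys independence from the explicit character formula but at the cost of a double expansion with three correction terms, i.e.\ the cancellation bookkeeping you defer is heavier than the paper's one-variable identity (both proofs leave their combinatorial core to the reader, so this is a difference of convenience, not of rigor). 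Two points need to be said explicitly in your version: (i) passing from an identity of Grothendieck classes to the direct-sum statement \eqref{Vnm} requires knowing that $V(n,\a)\otimes V(m,\b)$ is semisimple with typical summands, which follows since $\BZ$-linear independence of $\a,\b,1$ forces all composition factors to be typical, and typical simples are projective (Lemma~\ref{lem.proj}), so every simple quotient splits off; equivalently one can invoke the generic semisimplicity of $\catsl_{\overline{\a+\b}}$ from Section~\ref{sub.ribbon}; (ii) \eqref{texp} by itself only decomposes powers of a single $V(0,\a)$, so the re-expansion of products $[V(0,\b_1)]\cdots[V(0,\b_k)]$ with distinct parameters must be done by iterating \eqref{V0n} (using commutativity of the Grothendieck ring), which is what the lemma's phrase ``\eqref{texp} determines the decomposition'' implicitly means. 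With those clarifications your induction on $\mu$, anchored at the base case \eqref{V0n} and checked against the dimension count $16(n+1)(m+1)$, is a valid alternative proof.
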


Note that the decomposition~\eqref{Vnm} has multiplicities 1 and 2 where multiplicity
2 appears in the middle sum when $k=1 \bmod |n-m|$. For example, we have:
\be
\label{V1V1}
\begin{aligned}
  V(1,\a) \otimes V(1,\beta)  \cong~&
  V(0,\gamma+1)\oplus V(0,\gamma+2) \\ &
  \oplus 2 V(1,\gamma+1) \oplus V(2,\gamma)
  \oplus V(2,\gamma+1) \\ & \oplus V(3,\gamma)
\end{aligned}
\ee
and
\be
\label{V2V2}
\begin{aligned}
  V(2,\a) \otimes V(2,\beta) \cong~& 
  V(0,\gamma+2)+V(0,\gamma+3) \\ &
  \oplus 2 V(1,\gamma+2) \oplus V(2,\gamma+1) \oplus V(2,\gamma+2) \\ &
  \oplus 2V(3,\gamma+1) \oplus V(4,\gamma) \oplus V(4,\gamma+1) \\ & 
  \oplus V(5,\gamma) 
\end{aligned}
\ee
with $\gamma=\a+\beta$.

\begin{proof}  
A change of variables of ~\cite[Sec.1.1,p.285]{Geer:multi} implies that
the character of a typical module $V(n,\a+r)$ for $n \in \BZ_{\geq 0}$, $\a \in \BC$
and $r \in \BZ$ is given by
\be
\label{char.Vna}
\mathrm{ch}(V(n,\a+r)) = t^\a (xy)^r s_n(x,y), \qquad
s_n(x,y) = \sum_{i=0}^{n} x^i y^{n-1} = y^n \frac{1-(x/y)^{n+1}}{1-x/y} \,.
\ee
Since a character uniquely characterizes typical modules, this and a combinatorial
identity (easily proven by induction on $n,m$) implies Equation~\eqref{Vnm}.
\end{proof}

\begin{remark}
With effort, one can supply highest weight vectors in the decomposition~\eqref{Vnm}
generalizing Lemma~\ref{lem.high}. 
\end{remark}

\begin{remark}
\label{rem.mutation}  
The multiplicity free decomposition of $V(0,\a)^{\otimes 2}$ explains by TQFT
axioms why $\LG^{(1)}$ cannot detect Conway mutation, whereas
the multiplicity 2 decomposition of $V(n,\a)^{\otimes 2}$ for $n \geq 2$ explains
why $\LG^{(n)}$ {is able to} detect Conway mutation.
\end{remark}

\subsection{Braiding}
\label{sub.braiding}

In this section we describe the braiding in $\catsl$. 
Recalling the work of Khoroshkin, Tolstoy and Yamane \cite{KhoroshkinTolstoy,Yamane}
$\Uq$ has an explicit universal quasi-$R$-matrix. Recall $E_{12}$ and $F_{12}$
from Equation~\eqref{E12F12}. We then define
\be
\label{3R}
\begin{aligned}
\Check R_1 &= \sum_{i=0}^{\infty} \frac{q^{\tfrac{i(i-1)}{2}}}{[i]!}
(q-q^{-1})^{i}E_1^i \otimes F_1^i
\\
\Check R_2 &= 1\otimes 1 - (q-q^{-1})E_2 \otimes F_2 
\\
\Check R_{12} &= 1\otimes 1 - (q-q^{-1})E_{12} \otimes F_{12} 
\end{aligned}
\ee
where $\Check R_1$ belongs to a tensor product completion of $\Uq\otimes \Uq$.
The quasi-$R$-matrix is then given by $\Check R=\Check R_{2}\Check R_{12}\Check R_{1}$.

For any $V, W \in \catsl$, the action of $\Check R$ on $V\otimes W$ is well-defined
and we denote this action by the same name $\Check R\in \End_{\BC}(V \otimes W)$.
Next, define
\be
\label{Upsilon}
\Upsilon_{V,W} \in \End_{\BC}(V \otimes W), \qquad
\Upsilon_{V,W} (v \otimes w) = q^{\sum(a^{-1})_{ij}\lambda_i \mu_j} v \otimes w
= q^{-(\lambda_1 \mu_2+\lambda_2 \mu_1+2\lambda_2 \mu_2)} v \otimes w,
\ee
where $v\in V$ and $w \in W$ are of weight $\lambda=(\lambda_1,\lambda_2)$ and
$\mu=(\mu_1,\mu_2)$, respectively and $(a_{ij})$ is the Cartan matrix.
The above formula involves bilinear terms in $\mu$ and $\lambda$ and as such,
it is not obtained by an action of $K$, and instead
requires the action of $H_i$ in order to be well-defined. This requires
(and motivates) the introduction of the unrolled version $\UqGH$ of the
quantum group $\UqG$ as was pointed out in the very first page
of ~\cite{Geer:superalgebra}.
We now define the graded swap $\tau_{V,W}$, the braiding $c_{V,W}$
and the ribbon map $\th_V$ by 
\be
\label{cth}
\begin{aligned}
\tau_{V,W}: & \,\, V\otimes W\to W\otimes V, \qquad
\tau_{V,W}(v\otimes w)=(-1)^{\p v \p w}w\otimes v \\
c_{V,W} : & \,\, V \otimes W \to W \otimes V, \qquad 
c_{V,W}=\tau_{V,W}\circ \Upsilon_{V,W}\circ \Check R, \\
\th_V : & \,\, V \to V, \hspace{2.8cm} \th_V=\trr(c_{V,V}) \,.
\end{aligned}
\ee

\begin{proposition}[\cite{KhoroshkinTolstoy,Yamane}]
The family of maps $c=\{c_{V,W} : V \otimes W \to W \otimes V\}_{V,W \in \catsl}$
defines a braiding on $\catsl$.
\end{proposition}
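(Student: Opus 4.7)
The plan is to verify the three defining features of a braiding in turn: well-definedness of each $c_{V,W}$ as a morphism in $\catsl$, naturality in both arguments, and the two hexagon relations. Each reduces to a property of the element $R := \Upsilon \cdot \Check R$, viewed as a quasi-triangular structure on a suitable completion of $\UqH \otimes \UqH$.

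First I would check well-definedness. On any $V, W \in \catsl$ the root generators $E_1$ and $F_1$ act locally nilpotently (as both modules are finite dimensional and weight graded), so the \emph{a priori} infinite sum $\Check R_1$ truncates to a finite sum on $V \otimes W$; the factors $\Check R_2$ and $\Check R_{12}$ are already finite expressions thanks to $E_2^2 = F_2^2 = 0$ and to the analogous relations between the $E_{12}, F_{12}$. The diagonal factor $\Upsilon_{V,W}$ is well-defined precisely because we act by the logarithmic Cartan generators $H_i$ available in $\UqH$ rather than only the exponentiated $K_i$ of $\UqG$; this is the motivation for passing to the unrolled version. That $c_{V,W}$ is in fact $\UqH$-linear (and not merely $\BC$-linear) follows from the intertwining identity $\Delta^{\mathrm{op}}(x) R = R \Delta(x)$ for $x \in \UqH$, combined with the fact that composition with the graded swap $\tau_{V,W}$ converts $\Delta^{\mathrm{op}}$ back into $\Delta$.

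Naturality is then formal: any morphism in $\catsl$ is $\UqH$-linear, so it commutes with the action of every element of $\UqH \otimes \UqH$, hence with each $\Check R_i$ and with $\Check R$; it preserves weights, so it commutes with $\Upsilon$; and it preserves parity, so it commutes with $\tau$. Composing these three compatibilities gives naturality of $c_{V,W}$ in both arguments. The heart of the matter is the two hexagon axioms, which, given naturality, are equivalent to the quasi-triangular identities
\[
(\Delta \otimes \id)(R) = R_{13} R_{23}, \qquad (\id \otimes \Delta)(R) = R_{13} R_{12}
\]
for $R = \Upsilon \cdot \Check R$. I would invoke directly the theorems of Khoroshkin--Tolstoy and Yamane~\cite{KhoroshkinTolstoy,Yamane}, which establish precisely these identities in an appropriate completion of $\UqH \otimes \UqH$. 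Rewriting them by bringing $\tau$ through the tensor factors yields the two hexagons for $c_{V,W}$.

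The main obstacle is really one of bookkeeping: one must verify that the parity conventions and the Hopf structure \eqref{eq:HopfStructure} used here match those of \cite{KhoroshkinTolstoy,Yamane}, and that the formula \eqref{Upsilon} for $\Upsilon$ reproduces their diagonal correction after the passage from the exponentiated Cartan to the logarithmic one. The coassociativity identities that would have to be checked by hand for $\Upsilon$ reduce to bilinear identities in the weights which are immediate from the symmetry of the matrix $(a^{-1})_{ij}$ entering \eqref{Upsilon}; the genuinely quantum content of the quasi-triangular identities concerns $\Check R$ alone, for which the cited references provide the full verification.
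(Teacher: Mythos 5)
Your proposal is correct and takes essentially the same route as the paper, which gives no written proof at all and simply attributes the statement to Khoroshkin--Tolstoy and Yamane for the quasi-triangularity identities of the quasi-$R$-matrix. The steps you spell out---truncation of $\Check R_1$ on finite-dimensional weight modules, the role of the generators $H_i$ in making $\Upsilon$ well defined (the reason for the unrolled quantum group), naturality from weight- and parity-preservation, intertwining via the graded swap, and the reduction of the hexagons to the cited identities together with the additivity of weights under tensor product---are exactly the routine verifications the paper leaves implicit.
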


The endomorphism $c_{V,V} \in \End_\cat(V \otimes V)$ is the $R$-matrix of the
representation $V$ and can be described explicitly using an ordered basis of $V$.
Using the ordered basis $(v_{0,0}, v_{0,1},v_{0,2}, v_{0,3})$ of $V=V_0(0,\a)$
from Equation~\eqref{eq.nonPBW}, and 
abbreviating the vectors $v_{0,i}\otimes v_{0,j}$ by $x_{i,j}$ for $i,j=0,\dots,3$
it follows that the matrix $\mathsf{R}_{\LG}=(c_{V,V}(x_{i,j}))_{0\leq i,j\leq 3}$ is
given by 
\begin{center}
\small
\resizebox{\textwidth}{!}{
$
  \mathsf{R}_{\LG}=q^{-2\a(\a+1)}\left(\begin{array}{@{}cccc}
  q^{2\a} x_{00}
  &
  q^{\a} x_{10}
  &
  q^{\a} x_{20}
  &
  x_{30}
  \\ 
  q^{\a} (x_{01} + \br \a x_{10})
  &
  - x_{11}
  &
  -q^{-1} x_{21}+
  \br \a x_{30}
  &
  q^{-\a-1} x_{31}
  \\ 
  q^{\a} (x_{02} + \br \a x_{20})
  &
  -q^{-1} x_{12}-{q^{-1}\br 1} x_{21}
  -
  q^{-1}\br \a x_{30}
  &
  - x_{22}
  &
  q^{-\a-1} x_{32}
  \\ 
  x_{03}+q^{-1}\br{\a+1} (x_{12} -q^{-1}x_{21} + \br{\a} x_{30})
  &
  q^{-\a-1} (x_{13}-\br{\a+1} x_{31})
  &
  q^{-\a-1} (x_{23}-\br{\a+1} x_{32})
  &
  q^{-2\a-2} x_{33}
  \end{array}\right)
$}\,.
\end{center}

\begin{remark}
The above $R$-matrix does not agree with the one given in~\cite{GHKKST}. Denote
the $R$-matrix therein by $\mathsf{R}_{\text{skein}}$ which acts on a tensor product
space $W^{\otimes 2}$ where $W$ has a basis $w_1,\dots, w_4$. Although these
$R$-matrices are not equal, they are conjugate in the sense of
\cite[Sec.2.5]{GHKST} by the map $\vphi=f\circ \vphi'$ where 
\begin{align}
    f\in \End_\BC(V)\,, \qquad f(t_0)= (t_0)^{-1}={q^{-2\a}}\,, \qquad
f(t_1)= (t_1)^{-1}={q^{2(\a+1)}}
\end{align}
enacts the change of variables and $\vphi':W\to V$ is defined by
\begin{small}
\be
\label{eq.basis}
\vphi(w_1)= \sqrt{(t_0-1)(1-t_1)} v_{0,0}\,,
\quad
\vphi(w_2)= v_{0,2}\,,\quad
\vphi(w_3)= v_{0,1}\,,\quad
\vphi(w_4)= {\frac{1}{t_0^{1/2}-t_0^{-1/2}}} v_{0,3}\,.
\ee
\end{small}
\end{remark}

\subsection{Ribbon Structure}
\label{sub.ribbon}

In this section we prove that the map~\eqref{cth} defines a ribbon structure
in $\catsl$, using \cite[Thm.2]{Geer:trace} adapted to the case of the
category $\catsl$. We recall some notions related to generic semisimplicty in
order to invoke \cite[Thm.2]{Geer:trace}, which allows us to prove that the
natural transformation $\th$ from Equation \eqref{cth} defines a ribbon
structure on $\catsl$.

Recall the notions of a simple object of a $\BK$-linear category and semisimplicity from
Section~\ref{sub.proj}.

Fix an abelian group $\sG$. A pivotal $\BK$-linear category $\cat$ is
\emph{$\sG$-graded} if for each $g\in\sG$ we have a nonempty full subcategory
$\cat_g\subseteq \cat$ stable
under retract such that:
\begin{enumerate}
\item $\cat = \bigoplus_{g\in \sG}\cat_g$\,,
\item if $V\in\cat_g$, then $V^*\in\cat_{-g}$\,,
\item if $V\in \cat_g$ and $V'\in \cat_{g'}$, then $V\otimes V'\in \cat_{g+g'}$\,,
\item if $V\in \cat_g$, $V'\in \cat_{g'}$, and $\Hom_\cat(V,V')\neq 0$, then $g=g'$.
\end{enumerate}

A subset $\sX \subset \sG$ is \emph{symmetric} if $\sX=-\sX$ and \emph{small}
 if $\bigcup_{i=1}^n (g_i+\sX) \neq \sG$ for any finite collection
 $g_1,\ldots ,g_n\in \sG$.
 
A $\sG$-graded category $\cat$ is \emph{generically semisimple} if there
 exists a small symmetric subset $\sX \subset \sG$ such that each subcategory
 $\cat_g$ is semisimple for $g \in \sG \setminus \sX$.

It turns out that the category $\catsl$ is $\BC/\BZ$-graded in the sense of the
above definition. Indeed, given $\bar\a \in \BC/\BZ$, consider
the full subcategory $\catsl_{\bar{\a}}$ of $\catsl$ of modules whose
highest weights belong to the set $ \BZ_{\geq0} \times (\a+\BZ)$.
The tensor product given in Lemma~\ref{lem.tp} implies property
(3) above whereas (2) and (4) are clear. 

Set $\sX=\{\bar{0}\} \subset \BC/\BZ$ corresponding to highest weights in
$\BZ_{\geq0} \times \BZ$, that contain the weight of the identity
$\unit\in\catsl_{\bar{0}}$, as well as the set
$\{(n,0),(n,-n-1)\mid n\in\BZ_{\geq 0}\}$ of atypical weights. Then,
$\catsl$ is generically semisimple with respect to $\sX$.

\begin{proposition}
The family of maps $\th=\{\th_{V} : V \to V\}_{V \in \catsl}$ defines a ribbon
structure on $\catsl$.
\end{proposition}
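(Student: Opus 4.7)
The plan is to apply Geer's theorem \cite[Thm.2]{Geer:trace} in the form just recalled. The preceding discussion has established that $\catsl$ is a $\BC/\BZ$-graded, pivotal, braided $\BC$-linear category that is generically semisimple with respect to the small symmetric exceptional set $\sX=\{\bar 0\}$, whose complementary subcategories $\catsl_{\bar\alpha}$ ($\bar\alpha\neq\bar 0$) are spanned by typical simple modules $V(n,\a)$. Under these hypotheses, Geer's theorem reduces the verification of the three ribbon axioms
\[
\th_{V\otimes W} = c_{W,V}\circ c_{V,W}\circ(\th_V\otimes \th_W), \qquad \th_\unit=\id_\unit, \qquad \th_{V^*}=\th_V^*
\]
on all of $\catsl$ to their verification on the typical modules, provided $\th$ is already a natural family of endomorphisms. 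This naturality is immediate from the naturality of the braiding $c$ and of $\trr$, and $\th_\unit=\id_\unit$ follows because both $\Upsilon$ and $\Check R$ act as the identity on $\unit\otimes\unit$.

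Next I would compute $\th_{V(n,\a)}$ for typical simples. By Schur, $\th_{V(n,\a)}=\lambda(n,\a)\cdot\id$ for some $\lambda(n,\a)\in\BC^\times$, and the scalar can be read off by acting on the highest weight vector $v_0$: in $\Check R=\Check R_2\Check R_{12}\Check R_1$ only the $1\otimes 1$ component contributes (since $E_i v_0=0$), and $\Upsilon$ contributes $q^{-\sum(a^{-1})_{ij}\lambda_i\lambda_j}$, so $\lambda(n,\a)$ is an explicit Laurent monomial in $q^\a, q$ determined by the highest weight. For the tensor compatibility, Lemma~\ref{lem.tp} decomposes $V(n,\a)\otimes V(m,\b)$ into typical summands $V(p,\g)$, on each of which both sides of the identity act as scalars, so the axiom reduces to the scalar identity $\lambda(p,\g)=\mu_{p,\g}\cdot\lambda(n,\a)\lambda(m,\b)$, where $\mu_{p,\g}$ is the eigenvalue of $c_{W,V}\circ c_{V,W}$ on the summand. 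For the dual compatibility, the isomorphism $V(n,\a)^*\cong V(n,-\a-n-1)$ from Remark~\ref{rem.Vdual} reduces the check to the invariance $\lambda(n,-\a-n-1)=\lambda(n,\a)$, which is visible from the explicit formula.

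The main obstacle is the tensor compatibility, which requires computing the eigenvalues $\mu_{p,\g}$ of $c_{W,V}\circ c_{V,W}$ on the highest weight vectors of each typical summand of $V(n,\a)\otimes V(m,\b)$. Lemma~\ref{lem.high} supplies such vectors in the case $V=V(0,\a)$; the general case can then be obtained by induction, realizing each $V(n,\a)$ as a direct summand of $V(0,\a)^{\otimes n+1}$ via Equation~\eqref{texp} and using hexagon axioms together with naturality of the braiding. Alternatively, one may appeal to the general fact that the formula $\th_V=\trr(c_{V,V})$ produces a ribbon element from any universal $R$-matrix on a (super) Hopf algebra, so the identity is forced on modules of $\UqH$ by the quasi-triangular structure of \cite{KhoroshkinTolstoy,Yamane} wherever the involved actions converge; this is precisely the generic, typical regime. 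Once the three axioms hold on typicals, Geer's theorem extends them to all of $\catsl$.
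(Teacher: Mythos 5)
Your skeleton coincides with the paper's: invoke \cite[Thm.2]{Geer:trace} for the generically semisimple, $\BC/\BZ$-graded category $\catsl$, evaluate the twist on a highest weight vector of a typical module, and use $V_p(n,\a)^*\cong V_p(n,-\a-n-1)$ to check compatibility with duals. But your execution has a genuine gap in where you place the burden of proof. The cited theorem is used in the paper precisely so that, for the canonical twist $\th_V=\trr(c_{V,V})$ in a pivotal braided generically semisimple category (for which naturality and the balancing identity $\th_{V\otimes W}=c_{W,V}\circ c_{V,W}\circ(\th_V\otimes\th_W)$ are automatic), the \emph{only} thing left to verify is $\th_{V^*}=\th_V^*$ on typical objects. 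You instead declare the tensor compatibility ``the main obstacle'' and propose to verify it by computing the eigenvalues $\mu_{p,\gamma}$ of the double braiding on every summand of $V(n,\a)\otimes V(m,\b)$ — a computation you never carry out, and which would require essentially the $6j$-type information the paper deliberately avoids (compare Remark~\ref{rem.n1}). Your fallback, that ``the formula $\th_V=\trr(c_{V,V})$ produces a ribbon element from any universal $R$-matrix,'' is false as stated: quasi-triangularity yields a braiding and a balanced twist, but the duality condition $\th_{V^*}=\th_V^*$ is exactly the additional property that must be checked; were your general fact true, the proposition would need no proof at all.

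Second, in the one check that does matter, your scalar is incomplete in a way that would make the verification fail. Acting on $v_0\otimes v_0$, the quasi-$R$-matrix contributes only its $1\otimes1$ term and the factor $\Upsilon_{V,V}$ (with the graded swap) gives $(-1)^p q^{-2\a(n+\a)}$, as you say; but the right partial trace also involves $\rev_V$, whose pivotal element $K_2^{-2}$ contributes a further $(-1)^p q^{-2\a}$, so that $\la\th_V\ra=q^{-2\a(n+\a+1)}$. This factor is not cosmetic: $q^{-2\a(n+\a)}$ is \emph{not} invariant under $\a\mapsto-\a-n-1$, whereas $q^{-2\a(n+\a+1)}$ is, and that invariance is precisely the identity $\lambda(n,-\a-n-1)=\lambda(n,\a)$ you appeal to. Since you never write the explicit formula, the claim ``visible from the explicit formula'' cannot be checked, and the formula your sketch suggests (the $\Upsilon$ monomial alone) would not satisfy it. With the corrected scalar and with the tensor axiom recognized as automatic, your argument collapses to the paper's proof.
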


\begin{proof}    
Since $\catsl$ is generically semisimple,
we may apply \cite[Thm.2]{Geer:trace}.
Therefore, it is sufficient to prove that $\th_V^*=\th_{V^*}$ for any typical
object $V\in\catsl$. Fix a typical object $V=V_p(n,\a)$. We compute the
composition 
\be
\begin{aligned}
  (\id_{V} \otimes \rev_{V})\circ (c_{V,V}\otimes \id_{V^*})
  \circ(\id_V\otimes \lcoev_{V})\,.
\end{aligned}
\ee
Let $v_0,v\in V$ with $v_0$ a highest weight vector. Then 
\be
\begin{aligned}
  c_{V,V}(v_0,v)=\tau_{V,V}\circ \Upsilon_{V,V}(v_0,v)
  =(-1)^{p\cdot \p v}\Upsilon_{V,V}(v,v_0)\,.
\end{aligned}
\ee
Note that the evaluation map $\rev_V(v_0\otimes v^*)$ is nonzero only if $v$ and
$v_0$ are colinear. Since $\Upsilon_{V,V}(v_0,v_0)=(-1)^p q^{-2\a(n+\a)}$ and
$\rev_V(v_0\otimes v_0^*)=(-1)^pq^{-2\a}$, we have $\la\th_V\ra=q^{-2\a(n+\a+1)}$ and 
\be
\begin{aligned}
   \th_{V}^*=q^{-2\a(n+\a+1)}\id_{V^*}=
   q^{-2(-\a-n-1))(n+(-\a-n-1)+1)}\id_{V^*}=\th_{V^*}\,.
\end{aligned}
\ee
\end{proof}

\subsection{Modified trace and modified dimension}

In this section we compute the modified trace of typical objects in $\catsl$. 
For $V, W \in \catsl$, let
\be
\Phi_{W,V} = (\id_{V} \otimes \rev_{W})\circ (c_{W,V}\otimes \id_{W^*})
\circ (c_{V,W}\otimes \id_{W^*})\circ (\id_V\otimes \lcoev_{W})\in \End_{\catsl}(V)
\ee
be the map associated to open Hopf link
\be
\Phi_{W,V}
=
\begin{tikzpicture}[baseline=25, scale=1]
\draw[very thick,->] (0,1) to (0,2);  
\draw[very thick,over] (-1/2,1) to [out=90, in=180] (0,3/2) to [out=0, in=90] (1/2,1);
\draw[very thick,->] (1/2,1) to [out=-90, in=0] (0,1/2) to [out=180, in=-90] (-1/2,1);
\draw[very thick,over] (0,0) to (0,1);  
\node[right] at (0,0) {$V$};
\node[right] at (1/2,1) {$W$};
\end{tikzpicture}
\,.
\ee

The following lemma is identical to \cite[Proposition 2.2]{Geer:multi} but with the
addition of the parity refinement, which amounts to the inclusion of the scalar $(-1)^{p'}$. 

\begin{lemma}
\label{lem.Hopf}
Assume $V=V_p(n,\alpha)$ is an irreducible representation and $W=V_{p'}(m,\beta)$.
Then 
\be
\la\Phi_{W,V}\ra = 
(-1)^{p'}q^{-(n+2\a+1)(m+2\b+1)}
 \{\alpha\}
 \{n+\alpha+1\}
 \frac{\{(n+1)(m+1)\}}{\{n+1\}}
 \,.
\ee
\end{lemma}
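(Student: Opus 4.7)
The plan is to adapt the direct computation of \cite[Prop.~2.2]{Geer:multi} to the $\BZ_2$-graded setting, tracking carefully the super signs that produce the $(-1)^{p'}$ refinement. Since $V=V_p(n,\alpha)$ is simple, Schur's lemma yields $\Phi_{W,V}=\la\Phi_{W,V}\ra\cdot\id_V$, and the scalar can be computed by evaluating $\Phi_{W,V}$ on the highest weight vector $v_0\in V$.

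The first simplification is that $\Check R$ acts as the identity on $v_0\otimes w$ for every weight vector $w\in W$ of weight $\mu=(\mu_1,\mu_2)$, because $E_1v_0=E_2v_0=0$ and hence $E_{12}v_0=0$. Consequently, from $c_{V,W}=\tau\circ\Upsilon\circ\Check R$ one obtains
\be
c_{V,W}(v_0\otimes w)=(-1)^{p\,\p w}\,q^{-n\mu_2-\alpha\mu_1-2\alpha\mu_2}\, w\otimes v_0\,.
\ee
By contrast, the second braiding $c_{W,V}(w\otimes v_0)$ requires an honest PBW expansion $\Check R(w\otimes v_0)=\sum_\psi c_\psi\, E^\psi w\otimes F^\psi v_0$ with explicit scalar coefficients coming from the product $\Check R_2\Check R_{12}\Check R_1$, followed by $\Upsilon$ and the graded swap $\tau$. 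The next step is to substitute into $(\id_V\otimes\rev_W)$ and sum over a basis $\{w_i\}$ of $W$; the evaluation $\rev_W$ is supported on the diagonal (up to the sign $(-1)^{\p{w_i}}$ and the $K_2^{-2}$ insertion), reducing $\Phi_{W,V}(v_0)$ to a super trace over $W$ of an explicit operator $\Omega\in\End_{\BC}(W)$ assembled from the $\Upsilon$-pairing, the $K_2^{-2}$-factor, the PBW coefficients, and the graded swap signs.

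The technical core is to evaluate this super trace in closed form. Using the explicit action formulas for the basis \eqref{eq.nonPBW} of $W=V_{p'}(m,\beta)$, the trace splits into three factors: the bilinear weight pairing from $\Upsilon$ combined with $K_2^{-2}$ produces $q^{-(n+2\alpha+1)(m+2\beta+1)}$; the PBW contributions from $\Check R_2\Check R_{12}$ collapse to $\{\alpha\}\{n+\alpha+1\}$ upon telescoping; and the $F_1$-tower on $v_0$ combined with the character of $W$ in the $H_2$-direction yields $\{(n+1)(m+1)\}/\{n+1\}$, recognizable as a $q$-character evaluation via \eqref{char.Vna}. The parity factor $(-1)^{p'}$ emerges because the sign $(-1)^{\p{w_i}}$ in $\rev_W$ converts the $W$-trace into a super trace, and replacing $V_+(m,\beta)$ by $V_-(m,\beta)$ flips the parity of every basis vector $w_i$, thereby flipping the sign of every term in the super trace relative to the ungraded case.

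The main obstacle is the closed-form evaluation of this super trace: the PBW expansion of $\Check R(w\otimes v_0)$ has many terms, and the cancellations yielding the claimed product require careful bookkeeping of signs and $q$-exponent identities analogous to those in \cite[Prop.~2.2]{Geer:multi}, where the only new ingredient is the systematic tracking of parities.
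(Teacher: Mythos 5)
Your setup coincides with the paper's: Schur's lemma, evaluation of $\Phi_{W,V}$ on the highest weight vector $v_0$, and the observation that $\Check R$ acts as the identity on $v_0\otimes w$ so that $c_{V,W}$ reduces to $\tau_{V,W}\circ\Upsilon_{V,W}$. But at the decisive step you miss the one simplification that makes the lemma a short computation, and what you defer in its place is exactly the unproven core. Because the output of $c_{W,V}\otimes\id_{W^*}$ is immediately composed with $\id_V\otimes\rev_W$, the only PBW terms of $\Check R(w_i\otimes v_0)=\sum_\psi c_\psi\,E^\psi w_i\otimes F^\psi v_0$ that can survive are those for which $E^\psi w_i$ has the same weight as $w_i$; since every $E^\psi$ with $\psi\neq 0$ strictly raises the weight, only the term $1\otimes 1$ contributes. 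Hence the second braiding may \emph{also} be replaced by $\tau_{W,V}\circ\Upsilon_{W,V}$, the two graded-swap signs $(-1)^{p\,\p{w_i}}$ cancel, and $\Phi_{W,V}$ collapses to $(\id_V\otimes\rev_W)\circ(\Upsilon_{V,W}^2\otimes\id_{W^*})\circ(\id_V\otimes\lcoev_W)$, a purely diagonal sum of $q$-powers over a weight basis of $W$, i.e.\ three elementary geometric series. This is the paper's key step; without it, your ``operator $\Omega$ assembled from PBW coefficients'' is not an elementary object, and you yourself flag its closed-form super trace as the main obstacle rather than resolving it, so the argument as proposed is incomplete.

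Moreover, your description of where the factors come from indicates the computation was not actually carried through. The factor $\br{\a}\br{n+\a+1}$ does not arise from ``telescoping PBW contributions of $\Check R_2\Check R_{12}$'' (those terms vanish identically by the weight argument above); it is the pair of alternating two-term sums $\sum_{c=0}^1(-1)^c q^{-2c\a}$ and $\sum_{b=0}^1(-1)^b q^{-2b(n+\a+1)}$ over the odd directions of $W$, produced by the $\Upsilon^2$-pairing with the weight $(n,\a)$ of $v_0$, the $K_2^{\pm 2}$ insertion, and the super signs in $\rev_W$. Likewise $\br{(n+1)(m+1)}/\br{n+1}$ is the geometric sum $\sum_{a=0}^m q^{-2a(n+1)}$ over the $F_1$-tower of $W$; there is no ``$F_1$-tower on $v_0$'' anywhere, since $v_0$ is never moved. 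Your explanation of the parity refinement is correct (it is the overall sign $(-1)^{p'}$ of the super trace induced by $(-1)^{\p{w_i}}$ in $\rev_W$, the two $\tau$-signs cancelling), but the central evaluation on which the stated formula rests is left as an acknowledged gap, so the proposal does not yet constitute a proof.
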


\begin{proof}
Let $v_0$ be a highest weight vector of $V$. Since $V$ is irreducible,
$\Phi_{W,V}(v_0)=\la \Phi_{W,V}\ra v_0$ and so it is sufficient to consider the
image of $v_0$. Additionally, this simplifies the computation of
$(c_{V,W}\otimes \id_{W^*})(v_0\otimes w\otimes w^*)
=\tau_{V,W}\circ \Upsilon_{V,W}(v_0\otimes w\otimes w^*)$. Due to the evaluation in
the last component of $\Phi_{W,V}$, the precomposition with
$(c_{W,V}\otimes \id_{W^*})$ on $w\otimes v_0\otimes w^*$ is equivalent to
precomposing $\tau_{W,V}\circ \Upsilon_{W,V}$. We additionally observe that
\be
\begin{aligned}
\tau_{W,V}\circ\tau_{V,W}=\id_{V\otimes W}\,, 
&&
\tau_{W,V}\circ \Upsilon_{W,V}= \Upsilon_{V,W}\circ\tau_{W,V}\,,
&&
\Upsilon_{W,V}=\Upsilon_{V,W}\,.
\end{aligned}
\ee
Therefore,
\be
\begin{aligned}
  \Phi_{W,V} &= (\id_{V} \otimes \rev_{W})\circ (c_{W,V}\otimes \id_{W^*})\circ
  (c_{V,W}\otimes \id_{W^*})\circ (\id_V\otimes \lcoev_{W})
\\
&=
(\id_{V} \otimes \rev_{W})\circ ((\tau_{W,V}\circ \Upsilon_{W,V})\otimes \id_{W^*})
\circ ((\tau_{V,W}\circ \Upsilon_{V,W})\otimes \id_{W^*})\circ (\id_V\otimes \lcoev_{W})
\\
&=
(\id_{V} \otimes \rev_{W})\circ (\Upsilon_{V,W}^2\otimes \id_{W^*})
\circ (\id_V\otimes \lcoev_{W})\,.
\end{aligned}
\ee
In the summations below, $0\leq a\leq m$ and $0\leq b,c\leq 1$:
\be
\begin{aligned}
v_0
&\xmapsto[]{\id \otimes \lcoev_W} 
\sum_{a,b,c} 
v_{0} \otimes w_{cba} \otimes w_{cba}^{\ast}
\\
&\xmapsto[]{\Upsilon_{V,W}^2\otimes \id_{W^*}}
\sum_{a,b,c}
 q^{-2(n(\beta+a+b)+\alpha(m-2a-b+c)+2\alpha(\beta+a+b))}
 v_0 \otimes w_{cba} \otimes w_{cba}^*
 \\
 &\xmapsto[]{\id_{V} \otimes \rev_{W}}
 \sum_{a,b,c}
 (-1)^{p'+b+c}
 q^{-2((n+2\alpha)(\beta+a+b)+\alpha(m-2a-b+c))}
 q^{-2(\beta+a+b)}
 v_0 
 \\
 &=
 (-1)^{p'}q^{-2((n+2\a+1)\b+\a m)}
 \left(\sum_{c=0}^1 (-1)^c q^{-2c\a}\right)
 \left(\sum_{b=0}^1 (-1)^b q^{-2b(n+\alpha+1)}\right)
 \left(\sum_{a=0}^m q^{-2a(n+1)}\right) {v_0}
 \\
 &=
 (-1)^{p'}q^{-2((n+2\a+1)\b+\a m)-\a-(n+\a+1)-(n+1)m}
 \{\alpha\}
 \{n+\alpha+1\}
 \frac{\{(n+1)(m+1)\}}{\{n+1\}} {v_0}
 \\
 &=
 (-1)^{p'}q^{-(n+2\a+1)(m+2\b+1)}
 \{\alpha\}
 \{n+\alpha+1\}
 \frac{\{(n+1)(m+1)\}}{\{n+1\}} {v_0} \,.
\end{aligned}
\ee
\end{proof}

\subsection{Projective cover of the tensor unit}

Consider the $\Uq$-module $P$ with generating vector $p_0$ of weight $(0,0)$
which is annihilated by the PBW basis vectors
\be\label{eq.ann}
E_1,F_1, E_1^2E_2, F_1^2F_2, F_1F_2E_2, F_2E_1E_2\,.
\ee
Note that the actions of $F_2$ and $E_2$ anticommute on $\la p_0\ra$ since
$[E_2,F_2]p_0=\frac{K_2-K_2^{-1}}{q-q^{-1}}p_0=0$. Then $P$ is presented in
Figure \ref{fig.proj}.

\begin{figure}[htpb!]
\begin{tikzpicture}
\node (top) at (0,1) {$\bullet$};
\node (bottom) at (0,-1) {$\bullet$};
\foreach \x in {1,2,3}{
\node[] (l\x) at (.2-1.2*\x,0) {$\bullet$};
\node[] (r\x) at (-.2+1.2*\x,0) {$\bullet$};
}
\draw[->,transform canvas={shift={(0,2pt)}}] (l1) -- (l2) node[above,midway] {$F_1$};
\draw[<-,transform canvas={shift={(0,-2pt)}}] (l1) -- (l2) node[below,midway] {$E_1$};
\draw[->,transform canvas={shift={(0,2pt)}}] (l2) -- (l3) node[above,midway] {$F_2$};
\draw[<-,transform canvas={shift={(0,-2pt)}}] (l2) -- (l3) node[below,midway] {$E_2$};
\draw[->,transform canvas={shift={(0,2pt)}}] (r1) -- (r2) node[above,midway] {$E_1$};
\draw[<-,transform canvas={shift={(0,-2pt)}}] (r1) -- (r2) node[below,midway] {$F_1$};
\draw[->,transform canvas={shift={(0,2pt)}}] (r2) -- (r3) node[above,midway] {$E_2$};
\draw[<-,transform canvas={shift={(0,-2pt)}}] (r2) -- (r3) node[below,midway] {$F_2$};
\draw[->] (top) -- (l1) node[above left,midway] {$F_2$};
\draw[->] (top) -- (r1) node[above right,midway] {$E_2$};
\draw[->] (l1) -- (bottom) node[below left,midway] {$E_2$};
\draw[->] (r1) -- (bottom) node[below right,midway] {$F_2$};
\end{tikzpicture}
\caption{The projective cover of the identity. The top-most vertex
  indicates $\la p_0\ra$ of weight $(0,0)$.}
\label{fig.proj}
\end{figure}
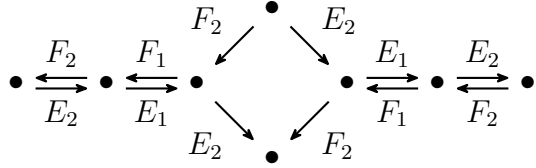

\begin{lemma}
\label{lem.Pproj}
Suppose $(0,\a)$ is typical. Then $V(0,\a)\otimes V(0,\a)^*\cong V(1,-1)\oplus P$.
In particular, $P$ is a projective module.
\end{lemma}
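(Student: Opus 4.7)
My plan is to deduce projectivity of $P$ from that of $V(0,\a)\otimes V(0,\a)^*$ by establishing the claimed decomposition explicitly.

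First, since $(0,\a)$ is typical, $V(0,\a)$ is projective by Lemma~\ref{lem.proj}. Remark~\ref{rem.Vdual} gives $V(0,\a)^*\cong V(0,-\a-1)$, whose typicality condition $(-\a-1)\bigl(0+1+(-\a-1)\bigr)=\a(\a+1)$ coincides with that of $V(0,\a)$, so $V(0,\a)^*$ is projective as well. The full subcategory $\calP^{\sl21}\subset\catsl$ is a tensor ideal, being closed under $-\otimes V$ for any $V\in\catsl$, so $V(0,\a)\otimes V(0,\a)^*$ is projective.

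To establish the decomposition, I would exhibit two injective $\UqH$-linear maps
\[
\iota_1:\, V(1,0)\hookrightarrow V(0,\a)\otimes V(0,\a)^*,
\qquad
\iota_2:\, P\hookrightarrow V(0,\a)\otimes V(0,\a)^*,
\]
whose images intersect trivially and whose dimensions sum to $\dim(V(0,\a)\otimes V(0,\a)^*)=16$. For $\iota_1$, I would locate a highest-weight vector $u$ in the tensor product of the appropriate weight satisfying $E_1 u=E_2 u=0$, and argue via the action formulas of Section~\ref{sub.catsl21} that the submodule $\Uq\cdot u$ reproduces the structure of $V(1,0)$. For $\iota_2$, I would construct a weight-$(0,0)$ vector $p$ in the four-dimensional $(0,0)$-weight space satisfying the six PBW annihilation relations listed in~\eqref{eq.ann}, and verify that $\Uq\cdot p$ reproduces the diamond structure of Figure~\ref{fig.proj}. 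Triviality of $\mathrm{im}(\iota_1)\cap\mathrm{im}(\iota_2)$ would follow from weight considerations, and the dimension count $\dim V(1,0)+\dim P=8+8=16$ forces equality, yielding $V(0,\a)\otimes V(0,\a)^*\cong V(1,0)\oplus P$.

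Once the decomposition is proved, the ``in particular'' assertion is automatic: as a direct summand of a projective module, $P$ is projective. The main obstacle will be the construction of the generator $p$ of the $P$-summand. A natural candidate for an invariant element of weight $(0,0)$ is $\lcoev_{V(0,\a)}(1)=\sum_i v_i\otimes v_i^*$, but because $V(0,\a)$ has vanishing quantum dimension (it is typical), this element sits in the socle copy of $\unit\subset P$ rather than generating $P$ itself. The correct $p$ must therefore be an $\a$-dependent correction of $\lcoev(1)$ by lower-depth weight-$(0,0)$ vectors, and the six annihilation relations of~\eqref{eq.ann} pin it down (up to scalar) via a linear system whose consistency depends on $\a(\a+1)\ne 0$, mirroring the role of typicality in the proof of Lemma~\ref{lem.proj}.
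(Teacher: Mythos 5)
Your projectivity argument (typicality of the dual via Remark~\ref{rem.Vdual}, Lemma~\ref{lem.proj}, and the ideal property of $\calP^{\sl21}$) is exactly the one in the paper, and your key ingredients --- a highest weight vector for the $V(1,0)$ summand and a weight-$(0,0)$ vector annihilated by the six elements of \eqref{eq.ann} --- are the same ones the paper uses. The mechanics differ, though: the paper first observes that the generic decomposition \eqref{V0n} does not apply at these parameters, checks that of the candidate highest weight vectors from Lemma~\ref{lem.high} only the one for $V(1,0)$ survives and splits that summand off, and then identifies $P$ with the \emph{quotient} $V(0,\a)\otimes V(0,\a)^*/V(1,0)$ by exhibiting an explicit weight-$(0,0)$ coset generator killed by \eqref{eq.ann}; the resulting surjection from $P$ onto the quotient together with the count $8=8$ finishes the proof. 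You instead want both $V(1,0)$ and $P$ realized as submodules and conclude by $8+8=16$.

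The gap in your version is the claim that $\iota_1(V(1,0))\cap\iota_2(P)=0$ ``follows from weight considerations.'' It does not: the two summands occupy common weight spaces of the $16$-dimensional tensor product (for instance both contain vectors of weights $(1,0)$ and $(-1,1)$), so weight bookkeeping alone cannot separate them. A correct substitute would be: a nonzero intersection is a nonzero submodule of your copy of $P$, hence contains its socle (the bottom weight-$(0,0)$ line of Figure~\ref{fig.proj}), and you must then check by direct computation that this line does not lie in $\Uq\cdot u$; alternatively, pass to the quotient by the $V(1,0)$-summand as the paper does, which avoids the intersection question entirely. Two smaller steps need the same care: since $(1,0)$ is atypical, a nonzero highest weight vector $u$ a priori generates only a highest-weight image, so you must verify $\dim \Uq\cdot u=8$ before the dimension count applies; and the relations \eqref{eq.ann} only give a surjection $P\to \Uq\cdot p$, so you must likewise verify $\dim \Uq\cdot p=8$ --- this is precisely the dimension comparison the paper performs on the quotient side. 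Your closing observation that $\lcoev_{V(0,\a)}(1)$ sits in the socle and must be corrected by an $\a$-dependent combination is sound and consistent with the paper's explicit generator, which is exactly such a combination.
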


\begin{proof}
By Remark \ref{rem.Vdual}, we may identify $V(0,\a)\otimes V(0,\a)^*$ with
$V(0,\a)\otimes V(0,-\a-1)$. However, the tensor decomposition formula in
Equation \eqref{V0n} does not apply. In particular, there is a vanishing of
highest weight vectors 
\be
\Delta(E_2E_{12}F_2F_{12})(v_0\otimes v_0)\qquad \mbox{and}\qquad 
\Delta(E_2E_{12}F_2F_{12})(v_0\otimes F_2F_1F_2v_0),
\ee
constructed in Lemma \ref{lem.high} for the summands $V(n,\a+\b)=V(0,-1)$ and
$V(n,\a+\b+1)=V(0,0)$, respectively. It is however straightforward to verify that
the highest weight vector $\Delta(E_2E_{12}F_2F_{12})(v_0\otimes F_2v_0)$, for
$V(n+1,\a+\b)=V(1,-1)$ is nonzero and therefore realizes $V(1,-1)$ as a direct
summand of $V(0,\a)\otimes V(0,-\a-1)$.

It remains to show that $P\cong V(0,\a)\otimes V(0,-\a-1)/V(1,-1)$. We prove that
the latter is a quotient of $P$. The claim will then follow as the
dimensions of $P$ and $(V(0,\a)\otimes V(0,-\a-1)/V(1,-1))$ are both $8$. Observe
that the annihilator ideal of 
\be
v=(v_0\otimes F_{12}F_2v_0+\frac{\br{\a}}{q^\a\br{\a+1}}F_{12}F_2v_0\otimes v_0)
+V(1,-1)\in V(0,\a)\otimes V(0,-\a-1)/V(1,-1)
\ee
includes all PBW vectors in Equation \eqref{eq.ann}. Also notice that $v$ is a
generator for the module. Since $p_0$ and $v$ also have weight $(0,0)$, the mapping
$p_0\mapsto v$ defines a surjection of $P$ onto the quotiented tensor product.

The last claim follows from Lemma \ref{lem.proj} and typicality of $(0,\a)$. Since
$V(0,\a)$ and $V(0,\a)^*$ are projective, then their tensor product and its summands
are projective as well.
\end{proof}

\begin{lemma}
\label{lem.cover}
The module $P$ is the projective cover of the tensor unit $\unit$, where the
covering map $p:P\twoheadrightarrow \unit$ is the quotient by the submodule
generated by $E_2p_0$ and $F_2p_0$.
\end{lemma}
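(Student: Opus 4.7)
The plan is to verify the three defining conditions of a projective cover: $P$ is projective, there is an epimorphism $p:P\twoheadrightarrow \unit$ as described, and $\ker p$ is superfluous in $P$. Projectivity is immediate from Lemma \ref{lem.Pproj}, since $P$ appears there as a direct summand of $V(0,\a)\otimes V(0,\a)^*$ for any typical $\a$. The remaining work therefore concerns $\ker p$.

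Let $N\subset P$ be the submodule generated by $E_2p_0$ and $F_2p_0$. I would first show that $N$ is the $7$-dimensional subspace spanned by $\{F_2p_0,\,E_2p_0,\,F_1F_2p_0,\,E_1E_2p_0,\,F_2F_1F_2p_0,\,E_2E_1E_2p_0,\,E_2F_2p_0\}$, using the diagram in Figure \ref{fig.proj} together with the annihilating relations \eqref{eq.ann} and the Serre-type relations \eqref{eq:rels}. Granting this, the class $p_0+N$ in $P/N$ has weight $(0,0)$, is killed by $E_2$ and $F_2$ by construction of $N$, and by $E_1$ and $F_1$ by the defining annihilators of $p_0$. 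Hence $P/N\cong \unit$ and $p$ is the claimed quotient.

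The key step is showing that $\ker p=N$ is superfluous. Suppose $Q\subseteq P$ is a subobject with $Q+N=P$, and choose $q\in Q$ lifting $p_0$ modulo $N$. Since $Q$ is $\mathfrak{h}$-stable, I may replace $q$ by its weight-$(0,0)$ component, which still satisfies $q\equiv p_0\pmod{N}$. The weight-$(0,0)$ subspace of $P$ has dimension two, spanned by $p_0$ and $E_2F_2p_0$, so $q=p_0+c\,E_2F_2p_0$ for some $c\in\BC$. Using $E_2^2=0$, we get $E_2q=E_2p_0\in Q$, and similarly $F_2q=F_2p_0\in Q$. Since these two vectors generate $N$, we have $N\subseteq Q$, and then $p_0=q-c\,E_2F_2p_0\in Q$. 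As $p_0$ generates $P$, we conclude $Q=P$.

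The main obstacle is the combinatorial bookkeeping behind the dimension count for $N$ — namely, that no iterated action starting from $E_2p_0$ and $F_2p_0$ produces a multiple of $p_0$. This requires some care, because the bottom vector $E_2F_2p_0=-F_2E_2p_0$ shares the weight $(0,0)$ with $p_0$; but the explicit module structure, together with $E_2^2=F_2^2=0$ and the Serre-type relations, rules out any such return.
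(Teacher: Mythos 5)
Your proposal is correct and follows essentially the same route as the paper: projectivity is quoted from Lemma \ref{lem.Pproj}, and superfluity of $\ker p$ is checked by taking the weight-$(0,0)$ component $p_0+c\,E_2F_2p_0$ of a lift in $Q$ and using nilpotency to recover $p_0$. The only (cosmetic) difference is that you apply $E_2$ and $F_2$ to put the generators of $N$ into $Q$ and then subtract, whereas the paper multiplies directly by $1-aF_2E_2$; both hinge on the same square-nilpotency of $F_2E_2$ on $\la p_0\ra$.
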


\begin{proof}
We showed in Lemma \ref{lem.Pproj} that $P$ is projective. To prove that $(P,p)$
is the projective cover of $\unit$, we show that for any submodule $Q\subset P$
such that $Q+\ker(p) = P$, then $Q=P$. Indeed, the image of $p$ is 1-dimensional
and so $\ker(p)$ is the submodule $P-\la p_0\ra$. For any $Q$ satisfying
$Q+\ker(p) = P$, $Q$ must contain some nonzero multiple of a weight vector
$(1+aF_2E_2)p_0$ for some $a\in \BC$. Notice that $aF_2E_2$ is square nilpotent on
$P$ and so $(1-aF_2E_2)(1+aF_2E_2)p_0=p_0$. Therefore, $Q$ contains $p_0$, a generator
for $P$, so $Q=P$.
\end{proof}

\begin{remark}
We see that $P$
is also the projective cover of both $V(0,0)$ and $\overline{V(0,0)}$ -- the lowest
weight module with lowest weight $(0,0)$; it is not isomorphic to $V(0,0)^*$. The
covering maps are given by taking the quotient by each of the respective submodules
$E_2p_0$ and $F_2p_0$. Thus, $P$ belongs to the exact sequences
\be
\begin{aligned}
0 \to V(0,-1) \to &~P \to V(0,0) \to 0
\,,\\
0 \to \overline{V(0,-1)} \to &~P \to \overline{V(0,0)} \to 0\,.
\end{aligned}
\ee
\end{remark}

\begin{proposition}
\label{prop.tr}
The trace $\mt$ on the ideal of projective modules of $\catsl$ is unique up to a
global scalar.
\end{proposition}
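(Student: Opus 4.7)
The plan is to apply the abstract uniqueness result, Proposition \ref{prop.normalize}, which gives the conclusion for any $\BK$-linear, locally-finite, unimodular, ribbon category with enough projectives. The task reduces to verifying these five hypotheses for $\catsl$.

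Four of them are largely bookkeeping: $\BC$-linearity is built into the definition of $\catsl$; local-finiteness is immediate because weight modules are finite-dimensional by definition, so $\Hom$-spaces are finite-dimensional and every module has finite composition length; enough projectives holds because $\catsl$ is a category of modules over a ring, as noted in Section \ref{sub.catsl21}; and the ribbon structure was constructed in Section \ref{sub.ribbon}. The only condition requiring genuine work is unimodularity, namely that the socle of the projective cover of $\unit$ is isomorphic to $\unit$.

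The key input for unimodularity is Lemma \ref{lem.cover}, which identifies the projective cover of $\unit$ as $(P,p)$, with $P$ the 8-dimensional module depicted in Figure \ref{fig.proj}. I would then exhibit an explicit simple submodule of $P$ by showing that the weight-$(0,0)$ vector $v := F_2 E_2 p_0$ at the bottom of the diagram is annihilated by every generator $E_1, F_1, E_2, F_2$. This follows from the annihilator relations in Equation \eqref{eq.ann}, the relation $F_2^2=0$, the commutation $[E_1,F_2]=0$ (which reduces $E_1 v$ to $F_2 E_1 E_2 p_0$ after using $E_1 p_0 = 0$, and $F_2 E_1 E_2$ is again in \eqref{eq.ann}), and the identity $(K_2-K_2^{-1})E_2 p_0 = 0$ (since $E_2 p_0$ has vanishing $H_2$-weight, hence $E_2 v = 0$). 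Consequently $\la v\ra \cong \unit$ is a simple submodule of $P$. Since $\unit$ is simple and $P$ is its projective cover, the socle of $P$ is the unique simple submodule by the discussion in Section \ref{sub.proj}, and therefore $\mathrm{soc}(P) \cong \unit$.

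I expect the unimodularity step to be the main obstacle, though once the detailed structure of $P$ has been pinned down in Lemmas \ref{lem.Pproj} and \ref{lem.cover} the required annihilation computation is short and essentially forced by the relations defining $P$. With unimodularity in hand, Proposition \ref{prop.normalize} directly yields the result.
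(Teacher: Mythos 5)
Your proposal is correct and follows the paper's own route: invoke the abstract uniqueness result (Proposition \ref{prop.normalize}, i.e.\ \cite[Cor.5.6]{GKPM22}), check the standing hypotheses on $\catsl$, and reduce everything to unimodularity via the projective cover $(P,p)$ of Lemma \ref{lem.cover}. The only difference is that where the paper simply says the socle of $P$ ``is seen to be $\unit$'' from Figure \ref{fig.proj}, you verify it explicitly by checking that $F_2E_2p_0$ spans a trivial submodule, which is a correct and welcome filling-in of that step.
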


\begin{proof}
We apply \cite[Cor.5.6]{GKPM22}. Since $\catsl$
is a locally finite pivotal $\BC$-linear tensor category with enough projectives,
it is sufficient to show that $\catsl$ is unimodular, as defined in
Section~\ref{sub.cat}. Indeed, the projective cover of $\unit$ is described in
Figure \ref{fig.proj} and its socle is seen to be $\unit$.
\end{proof}

Suppose that $V=V_p(n,\alpha)$ and $W=V_{p'}(m,\beta)$ are both irreducible
representations. Then the cyclicity of the modified trace implies
\be
\begin{aligned}
\mt(\Phi_{W,V})&=\mt(\Phi_{V,W})
\\
(-1)^{p'}
 \frac{\{\alpha\}
 \{n+\alpha+1\}}{\{n+1\}}\md(V)
 &=
 (-1)^{p}
 \frac{\{\b\}
 \{m+\b+1\}}{\{m+1\}}\md(W)\,.
\end{aligned}
\ee
where we have canceled like terms from the open Hopf links computed in Lemma
\ref{lem.Hopf}. We choose a suitable normalization of the modified trace so that
\be
\begin{aligned}
\md(V_p(n,\a))=(-1)^p\frac{\{n+1\}}{\{\alpha\}
 \{n+\alpha+1\}}\,.
\end{aligned}
\ee
By Proposition \ref{prop.tr}, this modified dimension function is essentially unique on $\catsl$.

\section{Links--Gould invariants}
\label{sub.LGcable}

Let $\LG^{(n)}(q^\a,q)$ denote the TQFT invariant of a knot colored by the
representation $V_0(n-1,\a)$ for $n \geq 1$ and normalized 1 at the unknot.
Below, given a knot $K$ in $S^3$, we denote by  
$K^{(n,0)}$ the $(n,0)$-th parallel of a 0-framed knot $K$, and likewise for
$K^{(n,1)}$, or more generally we denote by $K^{(\gamma)}$ the cable of the
0-framed knot $K$ with a fixed pattern $\gamma \in B_n$. 

\begin{proof}(of Theorem~\ref{thm.n0})
Recall the tensor decomposition formula 
\be
V_0(0,\a)^{\otimes n} = \bigoplus_{k + \ell \leq n-1} m^{(n)}_{k,\ell} V_k(k,n\a+\ell)
\ee
which holds for all $\a\notin \BQ$ and any $n\in\BZ_{\geq0}$. As a corollary to
Theorem \ref{thm.n0cable}, 
\be
\begin{aligned}
  \LG_{\knot^{(n,0)}}(q^\a,q)
  &=
  \frac{1}{\md(V(0,\a))}  \mRT_{V(0,\a), \knot^{(n,0)}}
  \\&=
  \frac{1}{\md(V(0,\a))}
  \sum_{k + \ell \leq n-1} m^{(n)}_{k,\ell} \mRT_{V_k(k,n\a+\ell), \knot}
  \\&=
  \sum_{k + \ell \leq n-1} m^{(n)}_{k,\ell} 
  \frac{\md(V_k(k,n\a+\ell))}{\md(V_0(0,\a))}
  \LG_{\knot}^{(k-1)}(q^{n\a+\ell},q)\,.
\end{aligned}
\ee
The result follows from Equation~\eqref{aijn2}.
\end{proof}

\begin{remark}
\label{rem.n1}
There is a similar formula for the $\knot^{(n,1)}$ cable or in fact for a cable
$\knot^{(\gamma)}$ for a fixed pattern $\gamma \in B_n$. We conjecture 
that for all $n \geq 2$, we have 
\be
\label{LGn1}
\begin{aligned}
  \LG^{(1)}_{\knot^{(n,1)}}(q^\a,q) = ~&
  (-1)^{n-1}A^{(n)}_{n-1,0}(q^\a,q) \LG^{(n)}_\knot(q^{n\a},q) \\ & +
\sum_{k=0}^{n-2} (-1)^k\Big(
A^{(n)}_{k,0}(q^\a,q) q^{2\a(n-k-1)} \LG^{(k+1)}_{\knot}(q^{n\a},q) 
\\ & \phantom{ +\sum_{k=0}^{n-2}\Big( }
+
A^{(n)}_{k,n-1-k}(q^\a,q) q^{-2(\a+1)(n-k-1)} \LG^{(k+1)}_{\knot}(q^{n(\a+1)-k-1},q) \Big) \,.
\end{aligned}
\ee
This formula is consistent with the symmetries and specializations of
Equation~\eqref{LGspec}. It has been checked and proven for $n=1,\dots,5$
by an explicit computation. A proof of the formula for all $n$ requires computing
$6j$-symbols in $\catsl$.
\end{remark}

\subsection{Proof of Theorem~\ref{thm.LG}}
Recall first that $\LG^{(1)}$ satisfies Theorem~\ref{thm.LG}. For the
specializations~\eqref{LGspec}, see~\cite{Ishii, Kohli, KPM}, and for the genus
bound~\eqref{LGgenus}, see~\cite{Kohli-Tahar}. 

Next, Theorem~\ref{thm.n0} expresses $\LG^{(n)}$ in terms of $\LG^{(<n)}$ by
splitting the sum on the right hand side of~\eqref{LGn0} as a sum over
$k+\ell \leq n-1, \,\, k \leq n-2$ plus the term with $(k,\ell)=(n-1,0)$. 
Given this, it is easy to see by induction that $\LG^{(n)}$ satisfies the symmetry
in the left hand part of Equation~\eqref{LGspec} if and only if $A^{(n)}_{k,\ell}$
satisfies the symmetry
\be
\label{Asym}
A^{(n)}_{k,\ell}(q^\a,q) = A^{(n)}_{k,n-1-k-\ell}(q^{-\a-1},q)
\ee
for all $n$, $k$ and $\ell$, and the explicit formula~\eqref{aijn} confirms the
latter.

The specialization $\LG^{(n)}_K(1,q)=1$ for all $n$ is well-known for $n=1$~\cite{Ishii}.
For $n \geq 2$, it follows by induction using the fact that the right hand side of
~\eqref{LGn0} is a rational function in $q^\a$ and $q$, and $A^{(n)}_{k,\ell}(q^\a,q)$
is regular at $q^{n\a}=1$ for $\ell>0$ and has computable residue when $\ell=0$, and
\be
m^{(n)}_{k,0} = \binom{n-1}{k} \,.
\ee

The specialization $\LG^{(n)}_{\knot}(q^\a,1)=\Alex_{\knot}(q^{2\a})^2$ is also known
for $n=1$~\cite{Kohli, KPM}. We use induction on $n \geq 1$ to prove the general case.
Assume that $\LG^{(k)}_{\knot}(q^\a,1)=\Alex_{\knot}(q^{2\a})^2$ for all $k < n$. When
$n \geq 2$ the left hand side of~\eqref{LGn0} vanishes when $q=1$, see for example
\cite[Cor.6.10]{HarperAlexander}, whereas the right hand
side is regular since $A^{(n)}_{k,\ell}(q^\a,1) = (-1)^k (k+1) (\br \a/\br {n\a})^2$.
Together with the induction hypothesis, this gives
\be
\label{e1}
0 = \sum_{\substack{k + \ell \leq n-1 \\ k \leq n-2}}
 m^{(n)}_{k,\ell} A^{(n)}_{k,\ell}(q^\a,1) \Alex_{\knot}(q^{2\a})^2 +
 m^{(n)}_{n-1,0} A^{(n)}_{n-1,0}(q^\a,1) \LG^{(n)}_{\knot}(q^\a,1) \,.
\ee
On the other hand, when $\knot$ is the unknot the left hand side of~\eqref{LGn0}
vanishes whereas $\LG^{(k)}_{\mathrm{unknot}}(q^\a,q)=1$, hence we obtain

\be
\label{e2}
0 = \sum_{k + \ell \leq n-1} m^{(n)}_{k,\ell} A^{(n)}_{k,\ell}(q^\a,q) \,.
\ee
Combining~\eqref{e1} and~\eqref{e2}, we deduce that
$\LG^{(n)}_{\knot}(q^\a,1)=\Alex_{\knot}(q^{2\a})^2$
concluding the proof of the right equation of~\eqref{LGspec}. 

Finally, the genus bound in Equation~\eqref{LGgenus} is proven inductively by
combining Theorem~\ref{thm.n0} with the enhancement of the bound
from~\cite{Kohli-Tahar} that is proved in~\cite[Thm.3.3]{NVdV2025} and the
Seifert surface represented in ~\cite[Fig.14,Fig.15,Fig.16]{Kohli-Tahar}. Using
the symmetry from~\eqref{LGspec} satisfied by $\LG^{(n)}$, we want to show that
\be
\label{genusbound}
\maxdeg_{q^\a} \LG^{(n)}_{\knot} (q^{\a},q) \leq 4 n \, \genus(\knot) 
\ee
while we know that
\be
\label{LG1n}
\maxdeg_{q^\a} \LG^{(1)}_{\knot^{(n,0)}} (q^{\a},q) \leq 4 n \, \genus(\knot)-2n+2
\ee
for all $n \geq 1$ from~\cite{Kohli-Tahar,NVdV2025}. But Equation \eqref{LGn0}
can be written alternatively
\be
\begin{aligned}
&\Big( \prod_{\substack{0 \leq m \leq n \\ m \neq 0 \\
m \neq n}} \br {{n\a + m}} \Big) \br {{n}} \br {{\a}} \br {{\a + 1}}
(-1)^{n-1} m^{(n)}_{n-1,0} \, \LG^{(n)}_{\knot}(q^{n\a},q) \\ 
& = \br {{1}} \Big( \prod_{\substack{0 \leq m \leq n }}
\br {{n\a + m}} \Big)\LG^{(1)}_{\knot^{(n,0)}}(q^\a,q) \\ &
- \sum_{\substack{k + \ell \leq n-1 \\ (k,l) \neq (n-1,0)}}
m^{(n)}_{k,\ell} \Big( \prod_{\substack{0 \leq m \leq n \\ m \neq \ell \\
m \neq k + \ell +1}} \br {{n\a + m}} \Big) (-1)^k \br {k+1}
\br \a \br {\a+1} \, \LG^{(k+1)}_{\knot}(q^{n\a +\ell},q) \, ,
\end{aligned}
\ee
so that if we write $a = \maxdeg_{q^\a}\LG^{(n)}_{\knot} (q^{\a},q)$ we get:
\be
\begin{aligned}
(n-2) n + 2 + n a
&\leq \max\left(
n^2 + 4n \, \genus(\knot) - 2n + 2
,
(n-2) n +  4n \, \genus(\knot) +2
\right)
\\
&= 
 (n-2) n +  4n \, \genus(\knot) +2 .
\end{aligned}
\ee
Thus, $a = \maxdeg_{q^\a}\LG^{(n)}_{\knot} (q^{\a},q) \leq 4 \,\text{genus}(\knot)$,
and therefore $\deg_{q^\a}\LG^{(n)}_{\knot} (q^{\a},q) \leq 8 \,\genus(\knot)$.

\subsection{Cabling formula for \texorpdfstring{$V_2$}{V2}} 
\label{sub.thm1}
The $V_2$ polynomial appears in the $(2,1)$-cabling of knots according to the formula
\be
\begin{aligned}\label{eq.Vcable}
V_{1,K^{(2,1)}}(t,\tq)  
&= B^{(2)}_{0,0}(t,\tq) V_{1,K}(t^2\tq^{-1/2},\tq) + 
B^{(2)}_{0,1}(t,\tq) V_{1,K}(t^2\tq^{1/2},\tq) \\
& \hspace{2.5cm} + B^{(2)}_{1,0}(t,\tq) V_{2,K}(t^2,\tq) 
\end{aligned}
\ee
where
\be
\begin{aligned}
\label{A2vals}
B^{(2)}_{0,0}(t,\tq)&=t\tq^{-1/2}\cdot \frac{t(t\tq^{1/2}-1)}{(t^2-1)(t+\tq^{1/2})},
\\
B^{(2)}_{0,1}(t,\tq)&=t^{-1}\tq^{-1/2}\cdot \frac{t(t-\tq^{1/2})}{(t^2-1)(t\tq^{1/2} + 1)},
\\
B^{(2)}_{1,0}(t,\tq)&=\frac{t(\tq+1)}{(t + \tq^{1/2})(t\tq^{1/2} + 1)}\,.
\end{aligned}
\ee
We prove this formula using the spectral decomposition of the $R$-matrix and of
endomorphisms associated to braids. The proof of this formula is nearly identical to
the proof of Remark \ref{rem.n1} aside from the $R$-matrices themselves being
different. Consequently, since $\LG^{(1)}=V_1$ we deduce that $\LG^{(2)}=V_2$. As
noted in the above remark, the proof of this formula for all $n$ requires the
$6j$-symbols, however, the $n=2$ case does not.

Let $R_1$ denote the $R$-matrix used to define $V_1(t,\tq)$ which acts on the tensor
product  $Y_1(t)\otimes Y_1(t)$ of four-dimensional vector spaces $Y_1(t)$ with
basis $(v_1,v_2,v_3,v_4)$. Taking $v_{ij}=v_i \otimes v_j$, the $R$-matrix 
$\mathsf{R}_{1}:=(R_{1}(v_{ij}))_{1 \leq i,j \leq 4}$ is given by 
\begin{center}
\small
\resizebox{\textwidth}{!}{
$
\mathsf{R}_{1}=
\left(
\begin{array}{cccc}
- v_{11} 
& 
-t^{-1} v_{21}
& 
-t \tq^{-1} v_{31}
&
-\tq^{-1} v_{41}
\\
- v_{12}
+
(t^{-1}-1) v_{21}
& 
t^{-2} v_{22}
& 
- t^2\tq^{-1} v_{32}
+
(1-t^2)\tq^{-1} v_{41}
&
\tq^{-1} v_{42}
\\
- v_{13}
+
(t^2\tq^{-1}-1) v_{31}
&
-t^{-2}\tq v_{23}
+
(1-t^{-2}) v_{41}
&
t^2\tq^{-1} v_{33}
&
v_{43}
\\
\begin{bmatrix}
- v_{14}
+
(t^{-2}\tq-1) v_{23}
\\
+
(t^2\tq^{-1}-1) v_{32}
+
(t^{-2}+t^2\tq^{-1}-2) v_{41}
\end{bmatrix}
&
t^{-2}\tq v_{24}
+
(t^{-2}-1) v_{42}
&
t^2\tq^{-1} v_{34}
+
(t^2\tq^{-1}-1) v_{43}
&
- v_{44}
\end{array}
\right).$}
\end{center} 

The spectral decomposition of $R_1$ realizes the isomorphism of vector spaces
\begin{align}
   Y_1(t)\otimes Y_1(t) \cong 
Y_1(t^2\tq^{-1/2}) \oplus Y_1(t^2\tq^{1/2}) \oplus Y_2(t^2)
\end{align}
where $Y_2(t^2)$ is an 8-dimensional vector space whose $R$-matrix realizes the knot invariant $V_2(t^2,\tq)$. The eigenvalues corresponding to these
eigenspaces are $t\tq^{-1/2}$, $t^{-1}\tq^{-1/2}$, and $-1$ respectively. We present
the eigenspace decomposition diagrammatically in terms of orthogonal projectors
on $Y_1(t)\otimes Y_1(t)$
\be
\label{eq.idsum}
\begin{tikzpicture}
\draw[->] (0,0) .. controls (.5,.25) and (.5,.75) .. (0,1);
\draw[->] (1,0) .. controls (.5,.25) and (.5,.75) .. (1,1);
\end{tikzpicture}
=
\begin{tikzpicture}
\drawproj{0}{0}{red}   
\end{tikzpicture}
+
\begin{tikzpicture}
\drawproj{0}{0}{green}   
\end{tikzpicture}
+
\begin{tikzpicture}
\drawproj{0}{0}{double}   
\end{tikzpicture}
\ee
where the projector onto the $t\tq^{-1/2}$, $t^{-1}\tq^{-1/2}$, or $-1$ eigenspace
is denoted by the red, green, or doubled projector diagram respectively.
Equivalently, we may write 

\begin{align}
\label{eq.Rsum}
\begin{tikzpicture}
\draw[->] (1,0) to [out=90,in=-90] (0,1);
\draw[over,->] (0,0) to[out=90,in=-90] (1,1);
\end{tikzpicture}
=
t^{-1}\tq^{1/2}
\begin{tikzpicture}
\drawproj{0}{0}{red}   
\end{tikzpicture}
+
t\tq^{1/2}
\begin{tikzpicture}
\drawproj{0}{0}{green}   
\end{tikzpicture}
-
\begin{tikzpicture}
\drawproj{0}{0}{double}   
\end{tikzpicture}
\,.
\end{align}

Fix a long knot diagram $K$ with zero writhe. Apply the following cabling
transformation to $K$ thus defining $K^{(2,0)}$ and $K^{(2,1)}$:
\[
\begin{tikzpicture}
	\draw[->] (1,0) to [out=90,in=-90] (0,1);
	\draw[over,->] (0,0) to [out=90,in=-90] (1,1);
\end{tikzpicture}
\rightsquigarrow
\begin{tikzpicture}
\draw[->] (1.8,0) to [out=90,in=-90] (.3,2);
\draw[->] (1.2,0) to [out=90,in=-90] (-.3,2);
\draw[over,->] (.3,0) to [out=90,in=-90] (1.8,2);
\draw[over,->] (-.3,0) to [out=90,in=-90] (1.2,2);
\end{tikzpicture} ~:
\qquad 
\begin{tikzpicture}
\node[rectangle,inner sep=3pt,draw] (c) at (1/2,1/2) {$K$};
\draw[->] (c.north) to ($(c.north)+(0,.5)$);
\draw[] (c.south) to ($(c.south)+(0,-.5)$);
\end{tikzpicture}
\mapsto
\begin{tikzpicture}
\node[rectangle,inner sep=3pt,draw] (c) at (1/2,1/2) {$K^{(2,0)}$};
\draw[->,transform canvas={shift={(5pt,0)}}] (c.north west) to ($(c.north west)+(0,.5)$);
\draw[->,transform canvas={shift={(-5pt,0)}}] (c.north east) to ($(c.north east)+(0,.5)$);
\draw[transform canvas={shift={(-5pt,0)}}] (c.south east) to ($(c.south east)+(0,-.5)$);
\draw[transform canvas={shift={(5pt,0)}}] (c.south west) to ($(c.south west)+(0,-.5)$);
\end{tikzpicture}
\qquad
\mbox{and}
\qquad 
\begin{tikzpicture}
\node[rectangle,inner sep=3pt,draw] (c) at (1/2,1/2) {$K^{(2,1)}$};
\draw[->,transform canvas={shift={(5pt,0)}}] (c.north west) to ($(c.north west)+(0,.5)$);
\draw[->,transform canvas={shift={(-5pt,0)}}] (c.north east) to ($(c.north east)+(0,.5)$);
\draw[transform canvas={shift={(-5pt,0)}}] (c.south east) to ($(c.south east)+(0,-.5)$);
\draw[transform canvas={shift={(5pt,0)}}] (c.south west) to ($(c.south west)+(0,-.5)$);
\end{tikzpicture}
=
\begin{tikzpicture}
\node[rectangle,inner sep=3pt,draw] (c) at (1/2,1/2) {$K^{(2,0)}$};
\draw[->,transform canvas={shift={(5pt,0)}}] (c.north west) to ($(c.north west)+(0,.5)$);
\draw[->,transform canvas={shift={(-5pt,0)}}] (c.north east) to ($(c.north east)+(0,.5)$);
\draw ($(c.south west)+(5pt,0)$) to[out=-90,in=90] ($(c.south east)+(-5pt,-1)$);
\draw[over] ($(c.south east)+(-5pt,0)$) to[out=-90,in=90] ($(c.south west)+(5pt,-1)$);
\end{tikzpicture}
\,.
\]
Note that the tangle diagram $K^{(2,1)}$ is not balanced (nonzero writhe). However,
the normalized closure of one component of the diagram is balanced as indicated on
the left side of the next equation. From Equation \eqref{eq.Rsum} and Schur's Lemma
we have
\be
\label{eq.sumdiagram}
\begin{aligned}
\begin{tikzpicture}[scale=.8]
\node[rectangle,inner sep=3pt,draw] (c) at (1/2,1/2) {$K^{(2,0)}$};

\coordinate (l) at ($(c.north east)+(-5pt,0)$);
\coordinate (r) at ($(c.north west)+(5pt,0)$);

\path let \p1=($(l)-(r)$), \n1={veclen(\p1)} in
{\pgfextra{\xdef\distlr{\n1}}};

\draw[] ($(c.north east)+(-5pt,0)+(\distlr,0)$) to[out=90,in=-90]
($(c.north east)+(-5pt,1)$);
\draw[over,->] ($(c.north east)+(-5pt,0)$) to[out=90,in=-90]
($(c.north east)+(-5pt,1)+(\distlr,0)$);

\draw[] ($(c.north east)+(-5pt,1)$) arc(180:0:\distlr/2);
\draw[] ($(c.south east)+(-5pt,-1)$) arc(-180:0:\distlr/2);
\draw ($(c.north east)+(-5pt,0)+(\distlr,0)$) to
($(c.south east)+(-5pt,-1)+(\distlr,0)$);

\draw[->] ($(c.north west)+(5pt,0)$) to ($(c.north west)+(5pt,1)$);
\draw ($(c.south west)+(5pt,0)$) to[out=-90,in=90] ($(c.south east)+(-5pt,-1)$);
\draw[over] ($(c.south east)+(-5pt,0)$) to[out=-90,in=90] ($(c.south west)+(5pt,-1)$);
\draw[] ($(c.south west)+(5pt,-1)$) to ($(c.south west)+(5pt,-1)-(0,\distlr/2)$);
\end{tikzpicture}
=&~
t\tq^{-1/2} \cdot V_{1,K}(t^2\tq^{1/2},\tq)~
\underbrace{
\begin{tikzpicture}[scale=.8]
\drawprojc{red};
\end{tikzpicture}
}_{X_1}
+
t^{-1}\tq^{-1/2} \cdot V_{1,K}(t^2\tq^{1/2},\tq)~
\underbrace{
\begin{tikzpicture}[scale=.8]
\drawprojc{green};
\end{tikzpicture}
}_{X_2}
\\&-
V_{2,K}(t^2,\tq)
\underbrace{
\begin{tikzpicture}[scale=.8]
\drawprojc{double};
\end{tikzpicture}
}_{X_3}\,.
\end{aligned}
\ee
Our reference to Schur's lemma follows from property (P1) of $V_1$, proven in
\cite[Thm.1.2]{GHKST}. Note that we also allow ourselves to slide projectors
through crossings, which is a consequence of the Reidemeister 3 move and that for
multiplicity-free eigenspaces, the projectors can be expressed as a linear
combination of 2-braids.

Again by Schur's Lemma, the maps $X_i$ are scalar multiples of the identity on the
vector space $Y_1(t)$. We write $X_i=x_i\cdot \id_{Y_1}$. Equation \eqref{eq.Vcable}
will now follow once the values of the $x_i$ are determined. 

It is in this step that the structure of the proof of Equation \eqref{eq.Vcable}
diverges from the proof of Remark \ref{rem.n1}. In the proof of the remark, the
coefficients $x_i$ are expressed in terms of ratios of modified dimensions. We do
not have that additional structure in this case. Instead we compute these
coefficients by solving a system of equations, which we construct as follows. 

Since any closed diagram evaluates to zero, Equation \eqref{eq.idsum} implies
\begin{align}
\sum_{i=1}^3 x_i=0\,.
\end{align}
Whereas Equation \eqref{eq.Rsum} and the Reidemeister 1 move implies, and similarly
for the inverse crossing,
\begin{align}
  t\tq^{-1/2} x_1+t^{-1}\tq^{-1/2} x_2-x_3=1 \qquad \mbox{and} \qquad
  t^{-1}\tq^{1/2} x_1+t\tq^{1/2} x_2-x_3=1  \,.
\end{align}
This system of three equations in the variables $x_i$ has a unique solution, 
\be
\begin{aligned}
  x_1=
  \frac{t(t\tq^{1/2}-1)}{(t^2-1)(t+\tq^{1/2})}\,,
\qquad
x_2=\frac{t(t-\tq^{1/2})}{(t^2-1)(t\tq^{1/2} + 1)}\,,
\qquad
x_3=\frac{-t(\tq+1)}{(t + \tq^{1/2})(t\tq^{1/2} + 1)}\,.
\end{aligned}
\ee
We now obtain the cabling formula by substituting these values into
Equation \eqref{eq.sumdiagram}.

\subsection{Proof of Theorem~\ref{thm.1}}
Observe that under the change of variables $(t,\tq)=(q^{2\a+1},q^{2})$ we have
\be
\begin{aligned}
  B^{(2)}_{0,0}(q^{2\a+1},q^{2}) &=
  q^{2\a}\frac{\br{\a}\br{\a+1}}{\br{2\a}\br{2\a+1}} = q^{2\a}
A^{(2)}_{0,0}(q^\a,q)\,, \\
B^{(2)}_{0,1}(q^{2\a+1},q^{2}) &=
\frac{\br{\a}\br{\a+1}}{q^{2(\a+1)}
  \br{2\a+1}\br{2\a+2}}= q^{-2(\a+1)} A^{(2)}_{0,1}(q^\a,q)\,,
\\
B^{(2)}_{1,0}(q^{2\a+1},q^{2}) &= [2]\frac{\br{\a}\br{\a+1}}{\br{2\a}\br{2\a+2}}
= -A^{(2)}_{1,0}(q^\a,q)\,.
\end{aligned}
\ee
We showed in \cite{GHKKST} that $\LG^{(1)}$ and $V_1$ are equivalent $R$-matrix
invariants. More precisely, $V_1(q^{2\a+1},q^{2})=\LG^{(1)}(q^\a,q)$. Observe now
that under this evaluation, $V_2(q^{4\a+2},q^2)$ satisfies the same equation as
$\LG^{(2)}(q^{2\a},q)$ in Remark \ref{rem.n1}
\be
\begin{aligned}
V_{1,K^{(2,1)}}(q^{2\a+1},q^2)  
&= B^{(2)}_{0,0}(q^{2\a+1},q^{2}) V_{1,K}(q^{4\a+2},q^2) + 
B^{(2)}_{0,1}(q^{2\a+1},q^{2})V_{1,K}(q^{4\a+3},q^2) \\
& \hspace{2.5cm} + B^{(2)}_{1,0}(q^{2\a+1},q^{2}) V_{2,K}(q^{4\a+4},q^2) \,,
\\
\LG^{(1)}_{K^{(2,1)}}(q^{\a},q)  
&= q^{2\a}
A^{(2)}_{0,0}(q^\a,q) \LG^{(1)}_{K}(q^{2\a},q) + 
q^{-2(\a+1)} A^{(2)}_{0,1}(q^\a,q) \LG^{(1)}_{K}(q^{2\a+1},q) \\
& \hspace{2.5cm} -A^{(2)}_{1,0}(q^\a,q) V_{2,K}(q^{4\a+4},q^{2})\,.
\end{aligned}
\ee
Thus, we have the equality $\LG^{(2)}(q^\a,q)=V_2(q^{2\a+2},q^{2})$.

\begin{corollary}
    Considering $V_1$ as an invariant of links, we obtain the formula for the $(2,0)$ cabling of a knot from Theorems \ref{thm.n0} and \ref{thm.1}:
    \be
    \begin{aligned}
V_{1,K^{(2,0)}}(t,\tq)  
&= \frac{t(t\tq^{1/2}-1)}{(t^2-1)(t+\tq^{1/2})} V_{1,K}(t^2\tq^{-1/2},\tq) + 
\frac{t(t-\tq^{1/2})}{(t^2-1)(t\tq^{1/2} + 1)} V_{1,K}(t^2\tq^{1/2},\tq) \\
& \hspace{2.5cm} + \frac{-t(\tq+1)}{(t + \tq^{1/2})(t\tq^{1/2} + 1)} V_{2,K}(t^2,\tq) \,.
\end{aligned}
    \ee
\end{corollary}


\bibliographystyle{hamsalpha}
\bibliography{biblio}
\end{document}